\numberwithin{equation}{section}
\newtheorem{theorem}{Theorem}[section]
\newtheorem{corollary}[theorem]{Corollary}
\newtheorem{lemma}[theorem]{Lemma}
\theoremstyle{definition}
\newtheorem{remark}[theorem]{Remark}
\theoremstyle{definition}
\newtheorem{definition}[theorem]{Definition}
\theoremstyle{definition}
\newtheorem{assumption}[theorem]{Assumption}
\theoremstyle{definition}
\def\dashint{\operatorname%
{\,\,\text{\bf-}\kern-.98em\DOTSI\intop\ilimits@\!\!}}
\def\\det{\text{det}}
\def\.5{\frac{1}{2}}
\def\cD{\mathcal{D}}
\def\cR{\mathcal{R}}
\newcommand{\RN}[1]{%
  \textup{\uppercase\expandafter{\romannumeral#1}}%
}
\newcommand{\Div}{\operatorname{div}}
\newcommand{\dist}{\text{dist}}
\newcounter{marnote}
\begin{document}

% ------------------------------------------------------------------------

\title[Hessian estimates for non-divergence form elliptic equations]{Hessian estimates for non-divergence form elliptic equations arising from composite materials}

\author[H. Dong]{Hongjie Dong}
\address[H. Dong]{Division of Applied Mathematics, Brown University, 182 George Street, Providence, RI 02912, USA}
\email{Hongjie\_Dong@brown.edu }
\thanks{H. Dong was partially supported by the NSF under agreement DMS-1600593.}

\author[L. Xu]{Longjuan Xu}
\address[L. Xu]{School of Mathematical Sciences, Beijing Normal University, Laboratory of Mathematics and Complex Systems, Ministry of Education, Beijing 100875, China}
\address{\qquad\quad Division of Applied Mathematics, Brown University, 182 George Street, Providence, RI 02912, USA}
\email{ljxu@mail.bnu.edu.cn, longjuan\_xu@brown.edu}
\thanks{L. Xu was partially supported by the China Scholarship Council (No. 201706040139).}

\begin{abstract}
In this paper, we prove that any $W^{2,1}$ strong solution to second-order
non-divergence form elliptic equations is locally $W^{2,\infty}$ and piecewise $C^{2}$ when the leading coefficients and data are of
piecewise Dini mean oscillation and the lower-order terms are bounded. Somewhat surprisingly here the interfacial boundaries are only required to be $C^{1,\text{Dini}}$. We also derive global weak-type $(1,1)$ estimates with respect to $A_{1}$ Muckenhoupt weights. %for equations without lower-order terms.
The corresponding results for the adjoint operator are
established. %if we impose a stronger assumption on the piecewise $L^{1}$-mean oscillation of the coefficients.
Our estimates are independent of the distance between these surfaces of discontinuity of the coefficients.
\end{abstract}

\maketitle

\section{Introduction and main results}%\label{intro}

Let $\cD$ be a bounded domain in $\mathbb R^{n}$ that contains $M$ disjoint sub-domains $\cD_{1}, \ldots, \cD_{M}$ with $C^{1,\text{Dini}}$ boundaries,
that is, $\cD=(\cup_{j=1}^{M}\overline{\cD}_{j})\setminus\partial \cD$. For more details about $C^{1,\text{Dini}}$ boundaries, see Definition \ref{def Dini}. We suppose that if the boundaries of two $\cD_{j}$ touch, then they touch on a whole component of such a boundary.
We thus without loss of generality assume that $\partial\cD\subset\partial\cD_{M}$.

We consider the following second-order elliptic equation in non-divergence form
\begin{equation}\label{equations}
Lu:=a^{ij}D_{ij}u+b^{i}D_{i}u+cu=f
\end{equation}
in $\cD$,
where the Einstein summation convention on repeated indices is used. Throughout this paper, the coefficients $a^{ij}, b^{i}$, and $c$ are bounded by a positive constant $\Lambda$. We assume that the principal coefficients matrices $A=(a^{ij})_{i,j=1}^{n}$ are defined on $\mathbb R^{n}$ and uniformly elliptic with ellipticity constant $\delta\in (0,1)$:
$$\delta|\xi|^{2}\leq a^{ij}(x)\xi^{i}\xi^{j},\quad\forall~\xi=(\xi^{1},\ldots,\xi^{n})\in\mathbb R^{n},\quad\forall~x\in\mathbb R^{n}.$$
Without loss of generality, we may assume that $A$ is symmetric, i.e, $a^{ij}=a^{ji}$. We are interested in the case when the coefficients and data are allowed to be discontinuous across the interfacial boundaries. Such problem, in particular for the corresponding divergence form equations, has been studied by many authors. See, for instance, \cite{lv,ln,d,X13,XB13,ABEFV}.

In this paper, we prove the piecewise $C^{2}$ regularity and local $W^{2,\infty}$ estimate for $W^{2,1}$ strong solutions of \eqref{equations} when the coefficients and $f$ are piecewise Dini continuous in the $L^{1}$-mean sense in each subdomains. Moreover, when the sub-domains,  $\cD_{1},\dots,\cD_{M-1}$, are away from the boundary $\partial\cD$, we prove global weak-type $(1,1)$ estimates with $A_{1}$ Muckenhoupt weights for any $W^{2,1}$  strong solution of \eqref{equation} without imposing further conditions on the leading coefficients $a^{ij}$ other than being piecewise Dini mean oscillation over an open set containing $\overline{\cD}$; see Theorem \ref{thm weak C2}.

Our argument is based on Campanato's approach presented in \cite{c,g}, the key point of which is to show that the mean oscillation of $D^{2}u$ (or $Du$, or $u$, respectively) in balls vanishes in certain order as the radii of balls go to zero. The method was used recently for divergence and non-divergence form elliptic equations with coefficients satisfying certain conditions. For instance, in \cite{XB13} the authors derived very general BMO, Dini, H\"older, and higher regularity estimates for weak solutions to the corresponding divergence form systems by using Camapato's approach, where the estimates may depend on the distance between sub-domains. See also \cite{d} in which both divergence form systems and non-divergence form equations were studied when the subdomains are laminar.
In \cite{dk}, the authors studied $C^{2}$ and weak-type $(1,1)$ estimates of the solution to
\begin{equation}\label{equation}
a^{ij}D_{ij}u=f.
\end{equation}
They showed that any $W^{2,2}$ strong solution to \eqref{equation} is $C^{2}$ provided that the modulus of continuity of coefficients in the $L^{1}$-mean sense satisfies the Dini condition. Later, the authors in \cite{dek} extended and improved the results in \cite{dk}, by showing that any strong solution to elliptic equations in non-divergence form with zero Dirichlet boundary conditions is $C^2$ up to the boundary when the coefficients satisfy the same condition. The main obstacle in \cite{dek,dk} is that the usual argument based on $L^{p}~(p>1)$ estimates does not work because only the assumptions on the $L^{1}$-mean oscillations of the coefficients and data are imposed. To overcome it, they used weak-type $(1,1)$ estimates and adapted Campanato's method in the $L^{p}$ setting with $p\in (0,1)$. The above idea was also used in a recent paper \cite{dx}, where the authors showed that $W^{1,p},1\leq p<\infty,$  weak solutions to divergence form elliptic {\em systems} are Lipschitz and piecewise $C^{1}$ under the same conditions on the coefficients and data as imposed before. Hence, this paper can be regarded as a companion paper of \cite{dx}.
 %to second-order non-divergence form elliptic equations.

Similar to \cite{dx}, an added difficulty is the lack of regularity of $D^{2}u$ in one direction. For this, we adapt the scheme in \cite{dx} to our case. We point out that the coordinate system in our setting and \cite{dx} is chosen according to the geometry of the sub-domains and is different at each point. This is in contrast to \cite{d,dek,dk}, where the coordinate system is fixed. Therefore, our mean oscillation estimates depend on the balls under consideration, which makes the argument much more involved.

Denote by $\mathcal{A}$ the set of piecewise constant functions in each $\cD_{j}$, $j=1,\ldots,M$. We assume that $A$ is piecewise Dini continuous in the $L^1$ sense in $\cD$, that is,
\begin{align}\label{omega A}
\omega_{A}(r):=\sup_{x_{0}\in \cD}\inf_{\hat{A}\in\mathcal{A}}\fint_{B_{r}(x_{0})}|A(x)-\hat{A}|\ dx
\end{align}
satisfies the Dini condition, where $B_{r}(x_{0})\subset \cD$. For more details about the Dini condition, see Definition \ref{piece Dini}. For $\varepsilon>0$ small, we set
$$\cD_{\varepsilon}:=\{x\in \cD: \dist(x,\partial \cD)>\varepsilon\}.$$
Denote $b:=(b^{1},\ldots,b^{n})$.

Our first result reads that if the coefficients and $f$ are piecewise Dini continuous in the $L^1$ sense, then any $W^{2,p}$ strong solution to the above equation \eqref{equations} is locally $W^{2,\infty}$ and piecewise $C^{2}$.
%, and $D_{xx'}u$ is continuous.
\begin{theorem}\label{thm}
Let $\cD$ be defined as above. Let $\varepsilon\in (0,1)$, $p\in(1,\infty)$, and $\gamma\in(0,1)$. Assume that $A$, $b$, $c$, and $f$ are of piecewise Dini mean oscillation in $\cD$, and $f\in L^{\infty}(\cD)$. If $u\in W^{2,p}(\cD)$ is a strong solution to \eqref{equations} in $\cD$, then $u\in C^{2}(\overline{{\cD}_{j}\cap \cD_{\varepsilon}})$, $j=1,\ldots,M$, and $Du$ is Lipschitz in $\cD_{\varepsilon}$. Moreover, for any fixed $x\in \cD_{\varepsilon}$, there exists a coordinate system associated with $x$, such that for all $y\in \cD_{\varepsilon}$, we have
\begin{align}\label{main result}
&|D_{xx'}u(x)-D_{xx'}u(y)|\nonumber\\
&\leq C\int_{0}^{|x-y|}\frac{\tilde{\omega}_{A}(t)}{t}dt\cdot\Bigg(\|D^{2}u\|_{L^{1}(\cD)}+\int_{0}^{1}\frac{\tilde{\omega}_{f}(t)}{t}dt+\|f\|_{L^{\infty}(\cD)}+\|u\|_{L^{1}(\cD)}\Bigg)\nonumber\\
&\quad+C|x-y|^{\gamma}\Bigg(\|D^{2}u\|_{L^{1}(\cD)}+\|f\|_{L^{\infty}(\cD)}+\|u\|_{L^{1}(\cD)}\Bigg)+C\int_{0}^{|x-y|}\frac{\tilde{\omega}_{f}(t)}{t}dt,
\end{align}
where $C$ depends on $n,M,p,\delta,\Lambda,\varepsilon,\omega_{b},\omega_{c}$, and the $C^{1,\text{Dini}}$ characteristics of $\partial\cD_{j}$, $\tilde\omega_{\bullet}(t)$ is a Dini function derived from $\omega_{\bullet}(t)$; see \eqref{tilde phi}.
\end{theorem}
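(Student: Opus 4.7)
The natural approach, following \cite{dek,dk,dx}, is Campanato's mean-oscillation scheme carried out in the $L^p$ setting with $p\in(0,1)$, since only $L^1$-type Dini moduli of the coefficients are assumed and the classical Calderón--Zygmund $L^p$ theory with $p>1$ is unavailable. The cornerstone tool is the weak-type $(1,1)$ estimate for the reference operator with piecewise constant leading coefficients, namely the content of Theorem~\ref{thm weak C2}. The basic move is the following: at each base point $x\in\cD_\varepsilon$, choose a coordinate system adapted to the local geometry---if $x$ lies close to an interface $\partial\cD_j$, align $x_n$ with the normal to $\partial\cD_j$ at the point closest to $x$; otherwise any orthonormal frame is admissible. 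On a ball $B_r(x)$ we flatten the relevant interface via a $C^{1,\text{Dini}}$ diffeomorphism and decompose $u=v+w$, where $v$ solves a reference problem with the frozen piecewise constant leading coefficients on the flattened ball with the boundary trace of $u$, and $w=u-v$ is the correction.

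\textbf{Oscillation estimate and iteration.} Writing $x'=(x_1,\ldots,x_{n-1})$ for the tangential directions, the mixed second derivatives $D_{xx'}v$ of the reference solution enjoy Hölder regularity up to the flattened interface, by standard Schauder-type arguments for constant-coefficient transmission problems across a flat interface. This yields a contraction factor $\kappa^\gamma$ when passing from $B_r(x)$ to $B_{\kappa r}(x)$. The perturbation $w$ satisfies an equation whose right-hand side is bounded pointwise by $|A-\hat A|\,|D^2u|$, the lower-order terms, $|f-\hat f|$, and a flattening error whose modulus is controlled by the $C^{1,\text{Dini}}$ characteristic of $\partial\cD_j$; applying the weak-type $(1,1)$ estimate to $w$ and taking $L^p$-averages with $p<1$ yields contributions of size $\tilde\omega_A(r)$ times $\|D^2u\|_{L^1(\cD)}$ together with $\tilde\omega_f(r)$. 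The resulting recursion for the $L^p$-oscillation $\Phi(x,r)$ of $D_{xx'}u$ (after subtracting a suitable piecewise affine function) takes the Campanato form
\begin{align*}
\Phi(x,\kappa r)\le C\kappa^\gamma\Phi(x,r)+C\tilde\omega_A(r)\,\Psi+C\tilde\omega_f(r),
\end{align*}
where $\Psi$ denotes the sum of norms appearing on the right-hand side of \eqref{main result}. Iterating in dyadic radii and summing the geometric series produces the logarithmic Dini-type modulus $\int_0^{|x-y|}\tilde\omega_A(t)/t\,dt$ and $\int_0^{|x-y|}\tilde\omega_f(t)/t\,dt$, which, via the standard passage from mean-oscillation decay to pointwise moduli, gives the $D_{xx'}u$ part of \eqref{main result}.

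\textbf{Main obstacle and conclusion.} The essential new difficulty, compared with \cite{dek,dk}, is that the adapted frame depends on the base point: comparing $D_{xx'}u(x)$ with $D_{xx'}u(y)$ in a single coordinate system requires controlling the rotation between the frames at $x$ and $y$, which is only as regular as the $C^{1,\text{Dini}}$ modulus of $\partial\cD_j$; this is why the theorem is stated in a coordinate system associated with $x$ and involves only $D_{xx'}u$, and the extra Hölder term $C|x-y|^\gamma\Psi$ in \eqref{main result} absorbs the residual tangential--normal mixing coming from the frame change at scales comparable to $|x-y|$. Threading this point-dependent frame through the iteration, while keeping the constants independent of the distance between interfaces, is the technical heart of the argument and forces the mean-oscillation estimates to be ball-dependent, in contrast to the fixed-frame setting of \cite{d,dek,dk}. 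Once $D_{xx'}u$ is controlled, the normal--normal component $D_{x_nx_n}u$ is recovered algebraically from
\begin{align*}
a^{nn}D_{nn}u=f-\sum_{(i,j)\ne(n,n)}a^{ij}D_{ij}u-b^iD_iu-cu,
\end{align*}
using uniform ellipticity; this gives boundedness of the full Hessian (hence $u\in W^{2,\infty}_{\mathrm{loc}}(\cD_\varepsilon)$), piecewise $C^2$ regularity on each $\overline{\cD_j\cap\cD_\varepsilon}$, while allowing the expected jump of $D_{nn}u$ across the interfaces.
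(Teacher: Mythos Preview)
Your overall architecture---Campanato iteration in $L^q$ with $q\in(0,1)$, a weak-type $(1,1)$ bound to control the perturbation piece, point-dependent frames, recovery of $D_{nn}u$ algebraically---matches the paper. Two points, however, deserve correction.

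\textbf{Flattening is the wrong mechanism here.} You propose to ``flatten the relevant interface via a $C^{1,\text{Dini}}$ diffeomorphism'' and then run the iteration on the flattened domain. For a non-divergence equation this does not work as stated: under $y=\Phi(x)$ the transformed equation picks up a first-order term $a^{kl}(x)\,D_{kl}\Phi^i\,D_i\hat u$, which requires $D^2\Phi$ to exist and be at least bounded, whereas a $C^{1,\text{Dini}}$ map need not have any second derivatives. The paper avoids flattening entirely in the core iteration. Instead, at each scale it compares the true curved subdomains $\cD_j$ with flat ``strips'' $\Omega_j$ built from the interface heights at the base point; Lemma~\ref{volume} bounds $r^{-n}|(\cD_j\Delta\Omega_j)\cap B_r(x_0)|$ by a Dini function $\omega_1(r)$, and this volume error is what absorbs the $C^{1,\text{Dini}}$ geometry into the modulus $\bar\omega_\bullet=\omega_\bullet+\omega_1$. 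Flattening does appear in the paper, but only at the very end, to remove the a~priori $u\in C^{1,1}$ hypothesis, and there it is done first under an auxiliary $C^{2,\alpha}$ assumption on the interface, followed by an approximation of $C^{1,\text{Dini}}$ domains by smooth ones via the regularized distance. Your scheme as written skips this subtlety.

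\textbf{The weak-type input is not Theorem~\ref{thm weak C2}.} That theorem is a \emph{global} weighted weak-$(1,1)$ estimate for the full operator on a $C^{1,1}$ domain, proved downstream using the adjoint theory of Section~\ref{sec thm C0} and the appendix solvability; invoking it here risks circularity and in any case is overkill. What the iteration actually needs is the \emph{local} weak-$(1,1)$ estimate for the frozen operator $\tilde L_{x_0'}$ on a ball with zero Dirichlet data, i.e.\ Lemma~\ref{weak est v}, whose proof is self-contained (duality with the divergence-form adjoint plus De Giorgi--Nash--Moser). Your description of the perturbation bound is otherwise correct, but with the roles of $v$ and $w$ swapped relative to the paper: there $v$ solves $\tilde L_{x_0'}v=F\chi_{B_{r/2}}$ with $v=0$ on $\partial B_r$ and $F$ the error terms, and $w=u-v$ is the ``good'' piece to which Lemma~\ref{lemma DDDx'} applies.
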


We note that in the above theorem the interfacial boundaries are only required to be in $C^{1,\text{Dini}}$, which is the same condition as in \cite{dx}. This is in contrast to the usual Dirichlet boundary value problem in which case for the $C^2$ estimate the boundary of the domain is assumed to be in $C^{2,\text{Dini}}$. See, for example, \cite{dek}.

Under the stronger condition that the coefficients and $f$ are piecewise H\"{o}lder continuous in $\cD$, we further show that $D_{xx'}u$ is H\"{o}lder continuous.
\begin{corollary}\label{coro}
Let $\cD$ be defined as above and each sub-domain has $C^{1,\mu}$ boundary with $\mu\in(0,1]$. Let $\varepsilon\in (0,1)$ and $p\in(1,\infty)$. Assume that $A, b, c, f\in C^{\alpha}(\overline{\cD}_{j})$ with $\alpha\in \Big(0,\mu/(1+\mu)\Big]$. If $u\in W^{2,p}(\cD)$ is a strong solution to \eqref{equations} in $\cD$, then the assertions of Theorem \ref{thm} also hold true. Furthermore, \eqref{main result} is replaced by
\begin{align}\label{DD holder}
|D_{xx'}u(x)-D_{xx'}u(y)|\leq C|x-y|^{\alpha}\left(\sum_{j=1}^{M}|f|_{\alpha;\overline{\cD}_{j}}+\|D^{2}u\|_{L^{1}(\cD)}+\|u\|_{L^{1}(\cD)}\right),
\end{align}
where $C$ depends on $n,M,\alpha,\mu,\delta,\Lambda,\varepsilon,p,\|A\|_{C^{\alpha}(\overline{\cD}_{j})},\|b\|_{C^{\alpha}(\overline{\cD}_{j})}$, $\|c\|_{C^{\alpha}(\overline{\cD}_{j})}$, and the $C^{1,\mu}$ norms of $\partial\cD_{j}$.
\end{corollary}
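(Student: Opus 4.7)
The strategy is to derive Corollary~\ref{coro} as a quantitative specialization of Theorem~\ref{thm}: under the piecewise H\"older hypothesis, the mean-oscillation moduli decay like $r^{\alpha}$, so the right-hand side of \eqref{main result} collapses to the form \eqref{DD holder}.

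First I would show that the piecewise H\"older hypothesis on $A,b,c,f$ implies the power-law bound
$$\omega_{A}(r)+\omega_{b}(r)+\omega_{c}(r)+\omega_{f}(r)\leq C r^{\alpha}.$$
If $B_{r}(x_{0})$ lies inside a single subdomain, this is immediate from the H\"older seminorm of the coefficient on that piece. For a ball straddling an interface I would approximate $A$ by the piecewise constant $\hat{A}$ obtained by freezing $A$ on each side of the \emph{tangent hyperplane} to $\partial\cD_{j}$ at the nearest boundary point. Because $\partial\cD_{j}\in C^{1,\mu}$, the true interface lies in a tube of width $O(r^{1+\mu})$ around this tangent, so the misclassified set has measure at most $Cr^{n+\mu}$. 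Combined with the interior H\"older variation of $A$ on the correctly classified set, this yields $\fint_{B_{r}}|A-\hat{A}|\,dx\leq Cr^{\alpha}+Cr^{\mu}$, and the restriction $\alpha\leq \mu/(1+\mu)$ (in particular $\alpha\leq\mu$) keeps this at $Cr^{\alpha}$.

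Next I would insert these bounds into Theorem~\ref{thm}. Since $\omega(r)\leq Cr^{\alpha}$, a direct integration using the definition \eqref{tilde phi} gives $\tilde\omega(t)\leq Ct^{\alpha}$, and hence
$$\int_{0}^{|x-y|}\frac{\tilde\omega_{A}(t)+\tilde\omega_{f}(t)}{t}\,dt\leq C\,|x-y|^{\alpha}.$$
Choosing $\gamma=\alpha$ in Theorem~\ref{thm} (permissible since $\alpha<1$) makes each term on the right of \eqref{main result} a multiple of $|x-y|^{\alpha}\cdot\bigl(\|D^{2}u\|_{L^{1}(\cD)}+\|f\|_{L^{\infty}(\cD)}+\|u\|_{L^{1}(\cD)}\bigr)$, plus an additional $C|x-y|^{\alpha}\sum_{j}|f|_{\alpha;\overline{\cD}_{j}}$ from the H\"older constant of $f$ absorbed into the $\tilde\omega_{f}$ integral.

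Finally I would absorb $\|f\|_{L^{\infty}(\cD)}$ into the right-hand side of \eqref{DD holder}. The equation, together with boundedness of $A,b,c$, yields $\|f\|_{L^{1}(\cD)}\leq C(\|D^{2}u\|_{L^{1}}+\|Du\|_{L^{1}}+\|u\|_{L^{1}})$, and interpolation on the bounded domain controls $\|Du\|_{L^{1}}$ by $\|D^{2}u\|_{L^{1}}+\|u\|_{L^{1}}$. On each closed piece, $\|f\|_{L^{\infty}(\overline{\cD}_{j})}\leq |f|_{\alpha;\overline{\cD}_{j}}(\operatorname{diam}\cD_{j})^{\alpha}+C|\cD_{j}|^{-1}\|f\|_{L^{1}(\cD_{j})}$ then gives the desired control by the quantities on the right of \eqref{DD holder}. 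The only genuinely non-routine step is the interface estimate in the second paragraph; all of the delicate analysis has already been done inside Theorem~\ref{thm}, so the rest is simple bookkeeping.
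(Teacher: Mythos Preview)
Your outline is close to the paper's own argument, but there is one genuine error and one point where you miss the mechanism that forces the restriction $\alpha\le\mu/(1+\mu)$.

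\textbf{The choice $\gamma=\alpha$ fails.} The function $\tilde\omega_\bullet$ in \eqref{tilde phi} is built from $\bar\omega_\bullet$ with weights $\kappa^{i\gamma}$. If $\bar\omega_\bullet(s)\sim s^{\alpha}$ and you take $\gamma=\alpha$, the sum $\sum_i\kappa^{i\alpha}(\kappa^{-i}t)^{\alpha}$ has no geometric decay and picks up a factor $\log(1/t)$; consequently $\int_0^{r}\tilde\omega_\bullet(t)/t\,dt\sim r^{\alpha}\log(1/r)$, and you only obtain a log-H\"older modulus, not \eqref{DD holder}. The paper (last line of Section~3.2) takes $\gamma\in(\alpha,1)$: then the series converges and $\tilde\omega_\bullet(t)\le Ct^{\alpha}$, while the term $|x-y|^{\gamma}$ in \eqref{main result} is harmlessly dominated by $|x-y|^{\alpha}$ for $|x-y|\le 1$. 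So the fix is to choose any $\gamma$ strictly larger than $\alpha$.

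\textbf{Where $\alpha\le\mu/(1+\mu)$ is actually used.} Your tangent-plane argument correctly bounds $\omega_A(r)$ by $Cr^{\alpha}$, but this only needs $\alpha\le\mu$, as you note. The sharper constraint comes from the fact that $\tilde\omega_\bullet$ in \eqref{tilde phi} is defined from $\bar\omega_\bullet=\omega_\bullet+\omega_1$, where $\omega_1$ is the geometric error from Lemma~\ref{volume} (measuring how far the actual interfaces deviate from the horizontal strips $\Omega_j$ used in the proof). In the $C^{1,\mu}$ case the paper records $\omega_1(r)\sim r^{\mu/(1+\mu)}$ (Section~3.2), so $\bar\omega_A(r)\le Cr^{\alpha}$ requires exactly $\alpha\le\mu/(1+\mu)$. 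Your proposal hides this inside the black-box invocation of Theorem~\ref{thm}.

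Finally, the work you do to absorb $\|f\|_{L^\infty(\cD)}$ is unnecessary: in the paper's notation $|f|_{\alpha;\overline{\cD}_j}=\sup_{\overline{\cD}_j}|f|+[f]_{\alpha;\overline{\cD}_j}$ already contains the sup norm, so $\|f\|_{L^\infty(\cD)}\le\sum_j|f|_{\alpha;\overline{\cD}_j}$ directly.
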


\begin{remark}
It follows from \eqref{DD holder} that $D^2 u\in C^{\alpha}\big({\overline{\cD}_{j}\cap \cD_{\varepsilon}}\big)$. Indeed, from the proof of Theorem \ref{thm}, we have $D^{2}u\in L_{\text{loc}}^{\infty}$ with
\begin{align*}
\|D^{2}u\|_{L^{\infty}(\cD_{\varepsilon})}\leq C\|D^2u\|_{L^{1}(\cD)}+C
\sum_{j=1}^{M}|f|_{\alpha;\overline{\cD}_{j}}+C\|u\|_{L^{1}(\cD)}.
\end{align*}
Then, $Du$ and $u$ are Lipschitz in $\cD_{\varepsilon}$. Since
$$D_{nn}u=\frac{1}{a^{nn}}\left(f-b^{i}D_{i}u-cu-\sum_{(i,j)\neq(n,n)}a^{ij}D_{ij}u\right),$$
we also have $D_{nn}u\in C^{\alpha}\big({\overline{\cD}_{j}\cap \cD_{\varepsilon}}\big)$. Therefore, $u\in C^{2,\alpha}\big({\overline{\cD}_{j}\cap \cD_{\varepsilon}}\big)$. By using interpolation inequalities, the term $\|D^2 u\|_{L^1(\cD)}$ on the right-hand side of \eqref{main result} and \eqref{DD holder} can be dropped. We also point out that for $L^p$-viscosity solutions, a result similar to Corollary \ref{coro} was obtained in \cite{X13} when $\alpha\in \Big(0,\mu/(n(1+\mu))\Big]$ by using a different argument.
\end{remark}

To prove the above results, we need to consider the formal adjoint operator defined by
$$L^{*}u=D_{ij}(a^{ij}u)-D_{i}(b^{i}u)+cu,$$
and deal with the following boundary value problem
\begin{align}\label{adjoint}
\begin{cases}
L^{*}u=\Div^{2} g&\quad\mbox{in}~\cD,\\
u=\frac{g\nu\cdot\nu}{A\nu\cdot\nu}&\quad\mbox{on}~\partial\cD,
\end{cases}
\end{align}
where $g=(g^{ij})_{i,j=1}^{n}$, $\Div^{2} g=D_{ij}g^{ij}$ with $g\in L^{\infty}(\cD)$, and $\nu$ is the unit outer normal vector on $\partial\cD$. For more details about the adjoint solution to \eqref{adjoint}, see Definition \ref{def adjoint}. By using a similar idea to that in the proof of Theorem \ref{thm}, we also obtain the corresponding results for the adjoint problem \eqref{adjoint}.

\begin{theorem}\label{thm C0}
Let $\cD$ be defined as in Theorem \ref{thm}. Let $\varepsilon\in (0,1)$, $p\in(1,\infty)$, and $\gamma\in(0,1)$. Suppose that $A$, $b$, $c$, and $g$ are of piecewise Dini mean oscillation in $\cD$, $g\in L^{\infty}(\cD)$. Let $u\in L^{p}(\cD)$ be a local adjoint solution of
$$L^{*}u=\Div^{2} g\quad\mbox{in}~\cD.$$
Then $u\in L^{\infty}(\cD_{\varepsilon})$. Moreover, for any fixed $x\in \cD_{\varepsilon}$, there exists a coordinate system associated with $x$, such that for any $y\in \cD_{\varepsilon}$, we have
\begin{align}\label{modulus u}
&|\bar{u}(x)-\bar{u}(y)|\nonumber\\
&\leq C\int_{0}^{|x-y|}\frac{\tilde{\omega}_{A}(t)}{t}\ dt\cdot\Bigg(\int_{0}^{1}\frac{\tilde{\omega}_{g}(t)}{t}\ dt+\|g\|_{L^{\infty}(\cD)}+\|u\|_{L^{p}(\cD)}\Bigg)\nonumber\\
&\quad+C|x-y|^{\gamma}\Bigg(\|g\|_{L^{\infty}(\cD)}+\|u\|_{L^{p}(\cD)}\Bigg)+C\int_{0}^{|x-y|}\frac{\tilde{\omega}_{g}(t)}{t}\ dt,
\end{align}
where $\bar{u}=a^{nn}u-g^{nn}$
%$\bar{u}(y)=a^{nn}(y)u(y)-g^{nn}(y)$,
and $C$ depends on $n$, $M$, $p$,  $\delta$, $\Lambda$, $\varepsilon$, $\omega_{b}$, $\omega_{c}$, and the $C^{1,\text{Dini}}$ characteristics of $\partial\cD_{j}$.
\end{theorem}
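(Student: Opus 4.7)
The plan is to mirror the proof of Theorem \ref{thm} and adapt the Campanato-type scheme of \cite{dx,dek,dk} to the adjoint equation, exploiting the fact that the natural ``regular'' quantity for $L^{*}$ plays the same role as the tangential Hessian $D_{xx'}u$ does in Theorem \ref{thm}. For an adjoint solution of $L^{*}u=\Div^{2}g$, testing against smooth functions and integrating by parts across an interface whose unit normal is $e_{n}$ in local coordinates forces $\bar u := a^{nn}u-g^{nn}$ to be continuous across that interface, even though $u$ itself may jump. This is the reason $\bar u$ is the right object to estimate in Campanato norm.

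The iteration has to be carried out in an $L^{p}$ setting with exponent $p\in(0,1)$, since only $L^{1}$-mean oscillation of the coefficients is assumed and the classical $L^{p}$ theory with $p\ge 1$ is therefore not available; in its place I would use a weak-type $(1,1)$ estimate for $L^{*}$ of the same flavor as the one established in \cite{dek,dx}. For a fixed $x_{0}\in\cD_{\varepsilon}$, I would first select a coordinate system centered at $x_{0}$ in which the component of $\partial\cD_{j}$ closest to $x_{0}$ is flattened to $\{x^{n}=0\}$, which is possible thanks to the $C^{1,\text{Dini}}$ assumption on the interfaces. At each scale $r$ I would freeze the leading coefficients and data by piecewise constant approximations $\hat A_{r}$, $\hat g_{r}$ on either side of the flattened interface and consider the auxiliary problem
\begin{equation*}
\hat L_{r}^{*}v=\Div^{2}\hat g_{r}\quad\text{in }B_{r}(x_{0}),
\end{equation*}
a pair of constant-coefficient adjoint equations coupled through the jump condition $[a^{nn}v-g^{nn}]=0$. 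Classical tools (Liouville-type estimates, odd/even reflection across the flat interface) then yield a Hölder decay estimate for $a^{nn}v-g^{nn}$ on concentric subballs.

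The difference $w:=u-v$ satisfies $L^{*}w=D_{ij}\!\left((a^{ij}-\hat a_{r}^{ij})u-(g^{ij}-\hat g_{r}^{ij})\right)-D_{i}(b^{i}u)+cu$ weakly, and the weak-type $(1,1)$ estimate for $L^{*}$ converts this into an $L^{p}$ bound
\begin{equation*}
\Bigl(\fint_{B_{r}}|w|^{p}\Bigr)^{1/p}\le C\omega_{A}(r)\,\|u\|_{L^{p}(B_{r})}+C\tilde\omega_{g}(r)+Cr\,(\|u\|_{L^{p}}+\|g\|_{L^{\infty}}).
\end{equation*}
Combining this with the decay estimate for the auxiliary solution produces the standard dichotomy
\begin{equation*}
\Bigl(\fint_{B_{\kappa r}(x_{0})}|\bar u-(\bar u)_{B_{\kappa r}(x_{0})}|^{p}\Bigr)^{1/p}\le C\kappa^{\gamma}\Bigl(\fint_{B_{r}(x_{0})}|\bar u-(\bar u)_{B_{r}(x_{0})}|^{p}\Bigr)^{1/p}+\Phi(r),
\end{equation*}
where $\Phi(r)$ bundles $\omega_{A}(r)$, $\tilde\omega_{g}(r)$, and the lower-order contributions. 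Iterating this across a geometric sequence of radii and summing (a now-standard Campanato lemma) yields \eqref{modulus u}, and the $L^{\infty}$ bound on $u$ in $\cD_{\varepsilon}$ follows as a byproduct after dividing by $a^{nn}$, which is bounded below.

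The main obstacle will be the same one encountered in \cite{dx} and in Theorem \ref{thm}, but sharpened by the fact that adjoint solutions are only $L^{p}$ objects that may jump across interfaces: the coordinate system, the piecewise-constant approximations $\hat A_{r},\hat g_{r}$, and hence the very meaning of $\bar u$ depend on the base point $x_{0}$. When comparing $\bar u(x)$ with $\bar u(y)$ the coordinates change, and tracking this mismatch uses the full strength of the $C^{1,\text{Dini}}$ hypothesis on the interfaces. A secondary technical difficulty is the weak-type $(1,1)$ estimate for $L^{*}$ in the present piecewise setting, which requires adapting the Calderón–Zygmund decomposition of \cite{dek,dx} to functions with jump discontinuities along $C^{1,\text{Dini}}$ hypersurfaces rather than merely continuous ones.
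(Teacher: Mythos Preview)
Your high-level plan is correct and matches the paper: work with $\bar u=a^{nn}u-g^{nn}$, run a Campanato iteration in $L^{q}$ with $q\in(0,1)$, use a weak-type $(1,1)$ estimate for the adjoint operator to control the error piece, and a decay estimate for the frozen-coefficient piece. The identification of the coordinate-dependence issue and of the lower-order terms as a secondary matter is also right.

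The place where your outline diverges from the paper, and where I would flag a gap, is the treatment of the frozen-coefficient adjoint problem. You propose to get the decay for $a^{nn}v-g^{nn}$ by ``Liouville-type estimates, odd/even reflection across the flat interface.'' Reflection is not the mechanism the paper uses, and it is not clear it works here: the coefficients are only piecewise constant in $x^{n}$, so a naive odd/even reflection does not preserve the equation. What the paper actually does is algebraic. With $w:=u-v$ solving $\bar L^{*}_{x'_{0}}w=\Div^{2}\bar g(x'_{0},x^{n})$, set $\hat a^{ij}:=\bar a^{ij}/\bar a^{nn}$ (so $\hat a^{nn}=1$) and $\bar w:=\bar a^{nn}w-\bar g^{nn}$; then $D_{ij}(\hat a^{ij}\bar w)=0$. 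The key identity \eqref{trans}, using $\hat a^{nn}=1$, rewrites this as a genuine \emph{divergence-form} equation $D_{i}(\breve a^{ij}(x^{n})D_{j}\bar w)=0$ with the nonsymmetric coefficients $\breve a$ of \eqref{tilde a}. One then upgrades $\bar w$ from $L^{p}$ to $W^{1,p}_{\mathrm{loc}}$ (finite differences in $x'$ plus \cite[Corollary~4.4]{dk1} for the normal derivative) and applies Lemma~\ref{div xn} to get $\|D\bar w\|_{L^{\infty}(B_{r/6})}\le Cr^{-1-n/q}\|\bar w-q_{0}\|_{L^{q}(B_{r/5})}$, which is the Lipschitz decay feeding the iteration. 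This conversion-to-divergence-form step is the heart of the argument and is absent from your sketch.

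Two smaller points. First, in your error estimate you write $\omega_{A}(r)\,\|u\|_{L^{p}(B_{r})}$; the paper needs $\omega_{A}(r)\,\|u\|_{L^{\infty}(B_{r})}$ (see the line leading to \eqref{estimate v}), and the $L^{\infty}$ bound on $\bar u$ is then recovered a posteriori by the same absorption argument as \eqref{est DDu''}, yielding \eqref{bound bar u}. Second, for the lower-order terms the paper does not fold $b,c$ into the Campanato error: it first reduces to $b\equiv c\equiv 0$, then at the end solves an auxiliary Poisson problem $\Delta w=D_{i}(b^{i}u)-cu$, bootstraps $u\in L^{p}\Rightarrow w\in C^{\beta}$ with $\beta$ large, and replaces $g$ by $g'=g+wI$. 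Bundling $b,c$ directly into the error as you do would cost you the clean dependence of $C$ on $\omega_{b},\omega_{c}$.
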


\begin{corollary}\label{coro u}
Let $\cD$ be defined as in Corollary \ref{coro}. Let $\varepsilon\in (0,1)$ and $p\in(1,\infty)$. Assume that $A, b, c, g\in C^{\alpha}(\overline{\cD}_{j})$ with $\alpha\in (0,1)$. %, and $g\in L^{\infty}(\cD)$.
If $u\in L^{p}(\cD)$ is a local adjoint solution of
$$L^{*}u=\Div^{2} g\quad\mbox{in}~\cD.$$
Then the assertions of Theorem \ref{thm C0} also hold true, and \eqref{modulus u} is replaced with
\begin{align*}%\label{holder u}
|\bar{u}(x)-\bar{u}(y)|\leq C|x-y|^{\alpha}\left(\sum_{j=1}^{M}|g|_{\alpha;\overline{\cD}_{j}}+\|u\|_{L^{p}(\cD)}\right),
\end{align*}
where $C$ depends on $n,M,p,\alpha,\mu,\delta,\Lambda,\varepsilon,\|A\|_{C^{\alpha}(\overline{\cD}_{j})}, \|b\|_{C^{\alpha}(\overline{\cD}_{j})}$, $\|c\|_{C^{\alpha}(\overline{\cD}_{j})}$, and the $C^{1,\mu}$ norms of $\partial\cD_{j}$.
\end{corollary}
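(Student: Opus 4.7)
The strategy is to deduce Corollary \ref{coro u} directly from Theorem \ref{thm C0} by exploiting the fact that piecewise $C^\alpha$ data give power-law decay of the mean-oscillation moduli, which then turns each term in the estimate \eqref{modulus u} into one of order $|x-y|^\alpha$.

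First I would verify that if $h$ is piecewise $C^\alpha$ on sub-domains with $C^{1,\mu}$ boundaries, then
$$\omega_h(r) \leq C r^\alpha \sum_{j=1}^M |h|_{\alpha;\overline{\cD}_j}$$
for $r$ below a scale determined by the $C^{1,\mu}$ characteristics of the interfaces. Indeed, one takes the piecewise constant test function $\hat h \in \mathcal A$ equal to $h(x_0)$ on the sub-domain containing $x_0$ and equal to $h$ evaluated at a fixed reference point of each other component of $B_r(x_0)$; the $C^{1,\mu}$ regularity of the interfaces forces each such component to have diameter $O(r)$, and the piecewise H\"older control of $h$ then produces the claimed bound. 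Applied to $A, b, c, g$, this in particular guarantees the piecewise Dini mean-oscillation hypothesis required by Theorem \ref{thm C0}.

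Next I would record that the auxiliary modulus $\tilde\omega_\bullet$ produced by the construction \eqref{tilde phi} preserves power-law decay: whenever $\omega_\bullet(r) \leq C r^\alpha$ one has $\tilde\omega_\bullet(r) \leq C' r^\alpha$, since \eqref{tilde phi} is a controlled geometric dyadic average of $\omega_\bullet$ and is stable under scaling. Consequently
$$\int_0^s \frac{\tilde\omega_\bullet(t)}{t}\, dt \leq \frac{C}{\alpha}\, s^\alpha, \qquad 0 < s \leq 1.$$

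Finally, I would invoke Theorem \ref{thm C0} with the choice $\gamma = \alpha \in (0,1)$. Each term on the right-hand side of \eqref{modulus u} then becomes $O(|x-y|^\alpha)$: the factor $\int_0^{|x-y|} \tilde\omega_A(t)/t\, dt$ contributes $|x-y|^\alpha$ with a constant controlled by $\sum_j |A|_{\alpha;\overline{\cD}_j}$ (absorbed into $C$), multiplied by the bounded quantity $\int_0^1 \tilde\omega_g(t)/t\, dt + \|g\|_{L^\infty(\cD)} + \|u\|_{L^p(\cD)} \leq C\bigl(\sum_j |g|_{\alpha;\overline{\cD}_j} + \|u\|_{L^p(\cD)}\bigr)$, where the $L^\infty$-norm of $g$ is absorbed via the embedding $C^\alpha \hookrightarrow L^\infty$; the $|x-y|^\gamma$ term is already of the correct form; and the last integral yields $C|x-y|^\alpha \sum_j |g|_{\alpha;\overline{\cD}_j}$. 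Collecting these yields the claimed H\"older estimate. The only technical point is the first step — verifying the power-law mean-oscillation bound uniformly across interfaces — but this is a routine consequence of the $C^{1,\mu}$ boundary regularity and requires no new PDE analysis beyond Theorem \ref{thm C0}. It is also worth noting that no constraint of the form $\alpha \leq \mu/(1+\mu)$ appears here, as it did in Corollary \ref{coro}, because the adjoint estimate \eqref{modulus u} controls only the scalar quantity $\bar u = a^{nn}u - g^{nn}$ rather than full second derivatives.
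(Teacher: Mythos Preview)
Your overall strategy matches the paper's: verify that piecewise $C^\alpha$ data yield power-type moduli (this is precisely Lemma~\ref{difference a holder}) and then feed this into Theorem~\ref{thm C0}. However, the choice $\gamma=\alpha$ is a genuine error. The auxiliary modulus $\tilde\omega_\bullet$ in \eqref{tilde phi} is built using the \emph{same} exponent $\gamma$ that appears in Theorem~\ref{thm C0}. If $\bar\omega_\bullet(s)\le Cs^\alpha$ and you set $\gamma=\alpha$, then for each of the roughly $\log(1/t)/\log(1/\kappa)$ indices $i$ with $\kappa^{-i}t\le 1$ the summand equals $\kappa^{i\alpha}\,C(\kappa^{-i}t)^\alpha=Ct^\alpha$, so the sum does \emph{not} collapse geometrically: one gets $\tilde\omega_\bullet(t)\sim t^\alpha\log(1/t)$ and hence $\int_0^{|x-y|}\tilde\omega_\bullet(t)/t\,dt\sim |x-y|^\alpha\log(1/|x-y|)$. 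Your claimed bound $\int_0^s\tilde\omega_\bullet(t)/t\,dt\le (C/\alpha)s^\alpha$ therefore fails, and the final estimate would carry an unwanted logarithmic factor.

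The paper's remedy is simply to take $\gamma\in(\alpha,1)$. Then the sum in \eqref{tilde phi} is dominated by $Ct^\alpha\sum_{i\ge 1}\kappa^{i(\gamma-\alpha)}\le C't^\alpha$, so indeed $\int_0^s\tilde\omega_\bullet(t)/t\,dt\le C s^\alpha$; and since $|x-y|<1$ on the relevant scale, the term $|x-y|^\gamma\le |x-y|^\alpha$ is already of the desired order. With this single change the rest of your argument goes through. (As a side remark, your explanation for the absence of the constraint $\alpha\le\mu/(1+\mu)$ is not convincing: the interface geometry still enters through $\omega_1$ exactly as in Corollary~\ref{coro}, via $\bar\omega_\bullet=\omega_\bullet+\omega_1$; the distinction between scalar $\bar u$ and full Hessian plays no role here.)
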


\begin{remark}
Restricting to each $\cD_{j}\cap \cD_{\varepsilon}$, since
$u=(a^{nn})^{-1}\big(\bar{u}+g^{nn}\big)$,
%,\quad u(y)=\frac{1}{a^{nn}(y)}\big(\bar{u}(y)+g^{nn}(y)\big),$
$a^{nn}$ and $g^{nn}$ are in $C^{\alpha}(\overline{\cD}_{j})$, we conclude that $u\in C^{\alpha}(\overline{\cD}_{j}\cap \cD_{\varepsilon})$.
\end{remark}

By using a duality argument and Theorem \ref{thm C0}, we derive the following corollary.
%shall derive a similar result to Theorem \ref{thm} for the case when $u\in W^{2,1}(\cD)$.

\begin{corollary}\label{thm W21}
Let $A$, $b$, $c$, and $f$ be as in Theorem \ref{thm}. If $u\in W^{2,1}(\cD)$ is a strong solution to \eqref{equations} in $\cD$, then we have $u\in W^{2,p}_{\text{loc}}(\cD)$ for some $p\in (1,\infty)$, and the conclusion of Theorem \ref{thm} still holds.
\end{corollary}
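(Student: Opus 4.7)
The plan is to upgrade the assumed $W^{2,1}$-regularity of $u$ to $W^{2,p}_{\mathrm{loc}}$ for some $p>1$ via a duality argument; once this upgrade is in place, Theorem \ref{thm} applies directly and delivers every remaining conclusion.

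Fix an interior ball $B=B_R(x_0)\subset\cD_\varepsilon$ and a cutoff $\eta\in C_c^\infty(B)$ with $\eta\equiv 1$ on $B_{R/2}$. By the duality characterization of $L^p$-norms, it suffices to bound
\[
I(g):=\int_{B_{R/2}}g^{ij}D_{ij}u\,dx
\]
uniformly in symmetric matrix-valued test fields $g=(g^{ij})\in C^\infty_c(B_{R/2})$ with $\|g\|_{L^{p'}}\le 1$. For each such $g$ I would solve the adjoint problem
\[
L^*v=\Div^2 g \text{ in } B,\qquad v=\frac{g\nu\cdot\nu}{A\nu\cdot\nu}\text{ on } \partial B,
\]
whose boundary datum vanishes since $g$ is compactly supported in $B_{R/2}$. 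Producing such a $v$ at the $L^{p'}$-level requires smoothing the coefficients $A,b,c$ by mollification, solving the classical smoothed adjoint problem, and then applying Theorem \ref{thm C0} to the smooth solutions---whose constants depend only on the Dini moduli $\omega_A,\omega_b,\omega_c$---to obtain a uniform bound $\|v\|_{L^{p'}(B)}\le C\|g\|_{L^{p'}(B)}$. Passing to the limit yields an adjoint solution $v$ in the sense of Definition \ref{def adjoint} with the same bound.

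Next, set $w=\eta u$ and perform integration by parts. Since $w$ has compact support in $B$ and $g$ vanishes near $\partial B$,
\[
I(g)=\int_B g^{ij}D_{ij}w=\int_B w\,\Div^2 g=\int_B L w\cdot v=\int_B(\eta f+\mathcal{R})\,v,
\]
where $\mathcal{R}:=2a^{ij}D_i\eta\,D_ju+(a^{ij}D_{ij}\eta+b^iD_i\eta)u$ is supported in the annulus $B\setminus B_{R/2}$; the third equality is the adjoint pairing identity for $v$ (Definition \ref{def adjoint}), with all boundary contributions cancelling. Because $u\in W^{2,1}(\cD)$, Sobolev embedding gives $Du\in L^{n/(n-1)}_{\mathrm{loc}}$ and $u\in L^{n/(n-2)}_{\mathrm{loc}}$ (or any finite exponent in low dimensions), so $\mathcal{R}\in L^p$ for every $p\in(1,n/(n-1)]$; combined with $f\in L^\infty$ and the $L^{p'}$-bound on $v$, H\"older's inequality yields $|I(g)|\le C$ uniformly in admissible $g$. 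Taking the supremum shows $D^2u\in L^p(B_{R/2})$, whence $u\in W^{2,p}_{\mathrm{loc}}(\cD)$; Theorem \ref{thm} then applies and delivers the stated conclusion.

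The hard part will be producing the adjoint solution $v$ with the correct quantitative $L^{p'}$-bound in terms of $\|g\|_{L^{p'}}$. Theorem \ref{thm C0} a priori delivers only an $L^\infty$-type control on $v$ in terms of $\|g\|_{L^\infty}$, so the upgrade to an $L^{p'}$-bound is not automatic. I would obtain it by interpolating the estimate of Theorem \ref{thm C0} against the weighted weak-type $(1,1)$ bound of Theorem \ref{thm weak C2} via a Marcinkiewicz-type argument applied to the smoothed problem, and then carrying the bound through the smoothing limit; tracking dependence on the mollification parameter so that the final constant depends only on the Dini moduli is the delicate technical point.
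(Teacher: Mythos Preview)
Your overall duality strategy matches the paper's, but you have the roles of the key estimates backwards, and this creates a real gap.

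The $L^{p'}$ bound $\|v\|_{L^{p'}}\le C\|g\|_{L^{p'}}$ is \emph{not} the hard part: it follows immediately from Lemma~\ref{sol adjoint}, which constructs the adjoint solution by Riesz representation, dualizing the $W^{2,p}$ solvability of Lemma~\ref{solvability}. No smoothing, no interpolation, no Marcinkiewicz argument is needed. Your proposed route---interpolating Theorem~\ref{thm C0} against Theorem~\ref{thm weak C2}---is both unnecessary and incorrect: Theorem~\ref{thm weak C2} is for the \emph{non-adjoint} operator, and its adjoint counterpart Theorem~\ref{thm weak C0} requires the extra hypotheses of Assumption~\ref{assump omega} and a $C^{2,\text{Dini}}$ boundary, neither of which is available here.

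The genuine difficulty, which you skip, is justifying the adjoint pairing identity $\int g^{ij}D_{ij}w=\int v\,Lw$ when $w=\eta u$ is only in $W^{2,1}$. Definition~\ref{def adjoint} gives this identity only for test functions in $W^{2,p}\cap W^{1,p}_0$ (the dual exponent to the space containing $v$), and $w\in W^{2,1}$ does not qualify. The paper handles this by approximating $u$ by smooth $u_\sigma\to u$ in $W^{2,1}_{\mathrm{loc}}$, applying the identity to $u_\sigma\zeta\in W^{2,p}$, and then passing to the limit. Since $L(u_\sigma\zeta)\to L(u\zeta)$ only in $L^1$, this limit requires $v\in L^\infty$ on the support of $\zeta$---and \emph{that} is precisely what Theorem~\ref{thm C0} is invoked for. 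So Theorem~\ref{thm C0} is not a source for the $L^{p'}$ bound; it is the tool that lets you pair the adjoint solution against a merely $W^{2,1}$ function.
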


We refer the reader to \cite{em} for a related result.

\begin{remark}
Similar to Corollary \ref{thm W21}, we can show that under the assumptions imposed in Theorem \ref{thm C0}, if $u\in L^{1}(\cD)$ is a local adjoint solution of
$$L^{*}u=\Div^{2} g\quad\mbox{in}~\cD,$$
then $u\in L^{p}_{\text{loc}}(\cD)$ for some $p\in (1,\infty)$, and the conclusion of Theorem \ref{thm C0} still holds true.
\end{remark}

Throughout this paper, unless otherwise stated, $C$ denotes a constant independent of the distance between sub-domains. %We call it a {\it universal constant}.

The rest of this paper is organized as follows. In Section \ref{preliminaries}, we fix our domain and the coordinate system. We also introduce some notation, definitions, and auxiliary results used in this paper. In Section \ref{proof thm}, we provide the proofs of Theorem \ref{thm} and Corollary \ref{coro}. We prove Theorem \ref{thm C0} and Corollary \ref{coro u} in Section \ref{sec thm C0} and Corollary \ref{thm W21} in Section \ref{pf of thm W21}. We give a global weak-type $(1,1)$ estimate in Section \ref{sec thm weak}. In the appendix, we give a weighted $W^{2,p}$-estimate and solvability for non-divergence form elliptic equations in $C^{1,1}$ domains with the zero Dirichlet boundary condition, which is of independent interest.

\section{Preliminaries}\label{preliminaries}

In this section, we fix our domain and list some notation, definitions, and auxiliary lemmas used in the paper.

\subsection{Notation and definitions}

We follow the notation and definitions from \cite{dx}. Write $x=(x^{1},\dots,x^{n})=(x',x^{n})$, $n\geq2$. Denote
$$B_{r}(x):=\{y\in\mathbb R^{n}: |y-x|<r\},\quad B'_{r}(x'):=\{y'\in\mathbb R^{n-1}: |y'-x'|<r\},$$
and $B_{r}:=B_{r}(0)$, $B'_{r}:=B'_{r}(0')$, $\cD_{r}(x):=\cD\cap B_{r}(x)$. For a function $f$ defined in $\mathbb R^{n}$, we set
$$(f)_{\cD}=\frac{1}{|\cD|}\int_{\cD}f(x)\ dx=\fint_{\cD}f(x)\ dx,$$
where $|\cD|$ is the $n$-dimensional Lebesgue measure of $\cD$. We shall use the notation
$$[u]_{k;\cD}:=\sup_{x\in\cD}|D^{k}u(x)|\quad\mbox{and}\quad[u]_{k,\gamma;\cD}:=\sup_{\begin{subarray}{1}x,y\in\cD\\
x\neq y
\end{subarray}}\frac{|D^{k}u(x)-D^{k}u(y)|}{|x-y|^{\gamma}},$$
where $k=0,1,\ldots,$ and $\gamma\in(0,1]$. For $k=0$, we denote $[u]_{\gamma;\cD}:=[u]_{0,\gamma;\cD}$ for abbreviation. We also define
$$|u|_{k;\cD}:=\sum_{j=0}^{k}[u]_{j;\cD}\quad\mbox{and}\quad|u|_{k,\gamma;\cD}:=|u|_{k;\cD}+[u]_{k,\gamma;\cD}.$$
We denote $C^{k,\gamma}(\cD)$ to be the set of bounded measurable functions $u$ that are $k$-times continuously differentiable in $\cD$ and $[u]_{k,\gamma;\cD}<\infty$. Moreover, the following notation will be used:
$$D_{x'}u=u_{x'},~ D_{x'}^{2}u=u_{x'x'},~ D_{xx'}u=u_{xx'},~ DD_{x'}^{2}u=u_{xx'x'},~ D^{2}D_{x'}u=u_{xxx'}.$$

For a function $f$, $k=1,2,\ldots,n$, and $h>0$, we define the finite difference quotient
$$
\delta_{h,k}f(x):=\frac{f(x+he_{k})-f(x)}{h}.
$$

\begin{definition}\label{piece Dini}
We say that a continuous increasing function $\omega: [0,1]\rightarrow\mathbb R$ satisfies the Dini condition if $\omega(0)=0$ and
$$\int_{0}^{t}\frac{\omega(s)}{s}\ ds<+\infty,\quad\forall~t\in(0,1].$$
\end{definition}

\begin{definition}\label{def Dini}
Let $k\ge 1$ be an integer and $\cD\subset\mathbb R^{n}$ be open and bounded. We say that $\partial\cD$ is $C^{k,\text{Dini}}$ if for each point $x_{0}\in\partial\cD$, there exists $R_{0}\in(0,1/8)$ independent of $x_{0}$ and a $C^{k,\text{Dini}}$ function (that is, $C^{k}$ function whose $k$-th derivatives are Dini continuous) $\varphi: B'_{R_{0}}\rightarrow\mathbb R$ such that (upon relabeling and reorienting the coordinates axes if necessary) in a new coordinate system $(x',x^{n})$, $x_{0}$ becomes the origin and
$$\cD_{R_{0}}(0)=\{x\in B_{R_{0}}: x^{n}>\varphi(x')\},\quad\varphi(0')=0,$$
and $D_{x'}^k\varphi$ has a modulus of continuity $\omega_0$, a Dini function which is increasing, concave, and independent of $x_0$.
\end{definition}

\begin{definition}\label{def weight}
(1) We say $w: \mathbb R^{n}\rightarrow[0,\infty)$ belongs to $A_{1}$ if there exists some constant $C$ such that for all balls $B$ in $\mathbb R^{n}$,
$$\fint_{B}w(y)\ dy\leq C\inf_{x\in B}w(x).$$
The $A_1$ constant $[w]_{A_1}$ of $w$ is defined as the infimum of all such $C$'s.

(2) We say $w: \mathbb R^{n}\rightarrow[0,\infty)$ belongs to $A_{p}$ for $p\in(1,\infty)$ if
$$\sup_{B}\frac{w(B)}{|B|}\left(\frac{w^{\frac{-1}{p-1}}(B)}{|B|}\right)^{p-1}<\infty,$$
where the supremum is taken over all balls in $\mathbb R^{n}$. The value of the supremum is the $A_{p}$ constant of $w$, and will be denoted by $[w]_{A_p}$.
\end{definition}

The following definition is extracted from \cite{em}.

\begin{definition}\label{def adjoint}
Let $g\in L^{p}(B_{1})$, $1<p<\infty$ and $1/p+1/p'=1$. We say that $u\in L^{p}(B_{1})$ is an adjoint solution of \eqref{adjoint} if $u$ satisfies
\begin{align}\label{iden adjoint}
\int_{B_{1}}uLv=\int_{B_{1}}\mbox{tr}(gD^{2}v),
\end{align}
for any $v\in W^{2,p'}(B_{1})\cap W_{0}^{1,p'}(B_{1})$, where $\mbox{tr}(gD^{2}v)=g^{ij}D_{ij}v$. By a local adjoint solution of
$$L^{*}u=\Div^{2} g\quad\mbox{in}~B_{1},$$
we mean a function $u\in L_{\text{loc}}^{p}(B_{1})$ that verifies \eqref{iden adjoint} for any $v\in W_{0}^{2,p'}(B_{1})$.
\end{definition}

\subsection{Some properties of the domain, coefficients, and data}\label{subsection domain}

Below, we slightly abuse the notation. Let $\cD$ be the unit ball $B_{1}$ and take $x_{0}\in B_{3/4}$. We now localize and fix our domain as follows. By suitable rotation and scaling, we may suppose that a finite number of sub-domains lie in $B_{1}$ and that they take the form
$$x^{n}=h_{j}(x'),\quad\forall~x'\in B'_{1},~j=1,\ldots,l\ (\le M),$$
with
$$-1<h_{1}(x')<\dots<h_{l}(x')<1,$$
and $h_{j}(x')\in C^{1,\text{Dini}}(B'_{1})$. Set $h_{0}(x')\equiv -1$ and $h_{l+1}(x')\equiv 1$ so that we have $l+1$ regions:
$$\cD_{j}:=\{x\in \cD: h_{j-1}(x')<x^{n}<h_{j}(x')\},\quad1\leq j\leq l+1.$$ We may suppose that there exists some $\cD_{j_{0}}$, such that $x_{0}\in B_{3/4}\cap \cD_{j_{0}}$ and the closest point on $\partial \cD_{j_{0}}$ to $x_{0}$ is $(x'_{0},h_{j_{0}}(x'_{0}))$, and $\nabla_{x'}h_{j_{0}}(x'_{0})=0'$ after a proper rotation.
We introduce the $l+1$ ``strips"
$$\Omega_{j}:=\{x\in \cD: h_{j-1}(x'_{0})<x^{n}<h_{j}(x'_{0})\},\quad1\leq j\leq l+1.$$
Then we have the following result, which is \cite[Lemma 2.3]{dx}.
\begin{lemma}\label{volume}
There exists a constant $C$, depending on $n,l$ and the $C^{1,\text{Dini}}$ characteristics of $h_{j}$, $1\leq j\leq l$, such that
$$r^{-n}|(\cD_{j}\Delta\Omega_{j})\cap B_{r}(x_{0})|\leq C\omega_{1}(r),\quad1\leq j\leq l+1,~0<r<r_0:=\frac 2 3\int_0^{R_0/2}\omega_0'(s)s\,ds,$$
where $\cD_{j}\Delta\Omega_{j}=(\cD_{j}\setminus\Omega_{j})\cup(\Omega_{j}\setminus \cD_{j})$, $R_{0}$ is defined in Definition \ref{def Dini}, $\omega_0'$ denotes the left derivative of $\omega_0$, and $\omega_{1}(r):=\omega_{0}(2r+R)$ is a Dini function for some constant $R:=R(r)>2r$ satisfying
\begin{equation*}
\int_{0}^{R}\omega_0'(2r+s)s\ ds=3r/2.
\end{equation*}
\end{lemma}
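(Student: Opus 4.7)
The plan is to estimate the symmetric difference by Fubini in the tangential variable $x'$. For each $m$, let
$$T_m := \{x : x^n \text{ lies between } h_m(x') \text{ and } h_m(x'_0)\}.$$
Since the $\cD_j$'s come from stacking the graphs $x^n = h_m(x')$ and the $\Omega_j$'s from stacking the horizontal planes $x^n = h_m(x'_0)$, a case analysis on where $x^n$ falls among these levels gives
$$(\cD_j \Delta \Omega_j) \cap B_r(x_0) \subseteq (T_{j-1} \cup T_j) \cap B_r(x_0).$$
Fubini then yields
$$|T_m \cap B_r(x_0)| \leq \int_{B'_r(x'_0)} \min\!\big(|h_m(x') - h_m(x'_0)|,\, 2\sqrt{r^2 - |x'-x'_0|^2}\big)\, dx',$$
so it suffices to control this for $m \in \{j-1, j\}$.

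The main building block is the bound for $m = j_0$. The coordinate system was chosen so that $\nabla_{x'} h_{j_0}(x'_0) = 0'$, and $\nabla_{x'} h_{j_0}$ has modulus of continuity $\omega_0$, whence
$$|h_{j_0}(x') - h_{j_0}(x'_0)| \leq \int_0^{|x'-x'_0|}\omega_0(s)\, ds \leq |x'-x'_0|\,\omega_0(|x'-x'_0|).$$
Integrating over $B'_r(x'_0)$ immediately gives $|T_{j_0}\cap B_r(x_0)| \leq C r^n \omega_0(r) \leq C r^n \omega_1(r)$.

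For $m \neq j_0$, $\nabla_{x'} h_m(x'_0)$ need not vanish, so the naive estimate bounds $T_m$ only by a tilted wedge of volume $\sim r^n$, which is too large. The remedy is geometric: by the closest-point hypothesis and the strict ordering of the $h_j$'s, one has $|h_m(x'_0)-x_0^n| \geq d_0 := h_{j_0}(x'_0)-x_0^n$ for every $m$, so $B_r(x_0)$ avoids every graph when $r < d_0$ and the symmetric difference is empty. When $r \geq d_0$, the portion of $T_m$ entering $B_r(x_0)$ is squeezed between the plane $x^n = h_m(x'_0)$ and the sphere $\partial B_r(x_0)$, with vertical thickness governed by the nonlinear remainder $|h_m(x') - h_m(x'_0) - \nabla_{x'} h_m(x'_0)\cdot(x'-x'_0)| \leq |x'-x'_0|\,\omega_0(|x'-x'_0|)$. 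The implicit definition $\int_0^R \omega_0'(2r+s)\, s\, ds = 3r/2$ calibrates $R$ so that within $B'_{2r+R}(x'_0)$ the graph $h_{j_0}$ sweeps a vertical range of order $r$; propagating this scale to every neighbouring $h_m$ yields $|T_m \cap B_r(x_0)| \leq C r^n \omega_0(2r+R) = C r^n \omega_1(r)$.

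The main obstacle is the non-critical case $m \neq j_0$: without a vanishing gradient at $x'_0$, the factor $\omega_0$ cannot be extracted from $|h_m(x')-h_m(x'_0)|$ directly, and the closest-point geometry must be exploited to confine $T_m \cap B_r(x_0)$ to a thin sliver. A secondary point is verifying that $R(r) \to 0$ fast enough as $r\to 0$ for $\omega_1(r) = \omega_0(2r + R(r))$ to remain a Dini function; this follows from integration by parts on the defining identity together with the concavity of $\omega_0$.
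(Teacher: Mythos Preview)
The paper does not prove this lemma; it quotes the result verbatim from \cite[Lemma~2.3]{dx}. So there is no in-paper argument to compare against, only the original source.

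Your reduction to the slabs $T_m$ via $(\cD_j\Delta\Omega_j)\subseteq T_{j-1}\cup T_j$ is correct, and the Fubini estimate together with $\nabla_{x'}h_{j_0}(x'_0)=0'$ cleanly handles the critical index $m=j_0$, giving $|T_{j_0}\cap B_r(x_0)|\le Cr^n\omega_0(r)\le Cr^n\omega_1(r)$. You also correctly identify that the case $m\neq j_0$ is the crux.

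However, your treatment of $m\neq j_0$ has a genuine gap. The vertical thickness of $T_m$ at $x'$ is the \emph{full} increment $|h_m(x')-h_m(x'_0)|$, not the nonlinear remainder $|h_m(x')-h_m(x'_0)-\nabla_{x'}h_m(x'_0)\cdot(x'-x'_0)|$; the linear part does not drop out, because $\nabla_{x'}h_m(x'_0)$ need not vanish. Taking the minimum with the chord length of $B_r(x_0)$ does not recover the missing factor either---a wedge of opening $|\nabla_{x'}h_m(x'_0)|$ intersected with a ball of radius $r$ still has volume of order $r^n|\nabla_{x'}h_m(x'_0)|$, with no $\omega$-smallness. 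What actually has to be proved---and what the present paper later invokes explicitly as a consequence of this lemma (see the passage around \eqref{diff DDu})---is the gradient bound
\[
|\nabla_{x'}h_m(y')|\le C\,\omega_1(r)\quad\text{for all relevant }m\text{ and }y'\in B'_r(x'_0).
\]
Obtaining this requires using the closest-point geometry for each graph $h_m$ that enters $B_r(x_0)$: at its own nearest point to $x_0$ the outward normal is parallel to the segment to $x_0$, which forces $|\nabla_{x'}h_m|$ there to be small once that nearest point is shown to lie within a controlled horizontal distance of $x'_0$; one then propagates via the modulus $\omega_0$. The implicit equation defining $R(r)$ enters precisely to make this horizontal distance quantitative. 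Your proposal gestures at this (``propagating this scale to every neighbouring $h_m$'') but does not carry it out, and the mechanism you name (nonlinear remainder squeezed against the sphere) is not the correct one.
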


Let $\hat{A}^{(j)}\in\mathcal{A}$ be a constant function in $\cD_{j}$ which corresponds to the definition of $\omega_{A}(r)$ in \eqref{omega A}. We define piecewise constant (matrix-valued) functions
\begin{align*}%\label{def bar a}
\bar{A}(x)=
\hat{A}^{(j)},\quad x\in\Omega_{j}.
\end{align*}
Using $\hat{b}^{(j)}$ and $\hat{f}^{(j)}$, which are also constant functions in $\cD_{j}$, we similarly define piecewise constant functions $\bar{b}$ and $\bar{f}$. From Lemma \ref{volume} and the boundedness of $A$, we have
\begin{align}\label{est A}
\fint_{B_{r}(x_{0})}|\hat{A}-\bar{A}|\ dx
\leq C(n,\Lambda)r^{-n}\sum_{j=1}^{l+1}|(\cD_{j}\Delta\Omega_{j})\cap B_{r}(x_{0})|\leq C\omega_{1}(r),
\end{align}
which is also true for $\hat{b}$ and $\hat{f}$.

\subsection{Some auxiliary lemmas}\label{sub auxiliary}

We first recall the $W^{2,p}$-solvability for elliptic equations with leading coefficients which are variably partially VMO (vanishing mean oscillation) in the interior of $B_{1}$ and VMO near the boundary. We choose a cut-off function $\eta\in C_{0}^{\infty}(B_{7/8})$ with
$$0\leq\eta\leq1,\quad \eta\equiv1~\mbox{in}~B_{3/4},\quad|\nabla\eta|\leq 16.$$
Let $\tilde{L}$ be the elliptic operator defined by
\begin{align*}
\tilde{L}u=\tilde{a}^{ij}D_{ij}u+b^{i}D_{i}u+cu,
\end{align*}
where $\tilde{a}^{ij}=\eta a^{ij}+\delta(1-\eta)\delta_{ij}$, $\delta$ is the ellipticity constant of $a^{ij}$, and $\delta_{ij}$ is the Kronecker delta symbol. Consider
\begin{align}\label{approxi sol}
\begin{cases}
\tilde{L}u-\lambda u=f&\quad\mbox{in}~B_{1},\\
u=0&\quad\mbox{on}~\partial B_{1},
\end{cases}
\end{align}
where $\lambda\geq0$ is a constant, $f\in L^{p}(B_{1})$. Here $\tilde{a}^{ij}$ satisfy the conditions of \cite[Theorem 2.5]{d1} in the interior of the domain $B_{1}$, i.e., for a sufficiently small constant $\gamma_{0}=\gamma_{0}(n,p,\delta)\in(0,1)$ to be specified later,
we can find a constant $r_{0}\in(0,1)$ such that the following holds: For any $x_{0}\in B_{1}$ and $r\in(0,\min\{r_{0},\mbox{dist}(x_{0},\partial B_{1})/2\}]$ (so that $B_{r}(x_{0})\subset B_{1}$), in a coordinate system depending on $x_{0}$ and $r$, one can find a symmetric $\bar{a}^{ij}:=\bar{a}^{ij}(x^{n})$ satisfying the ellipticity condition and
\begin{equation}\label{condi A}
\fint_{B_{r}(x_{0})}|\tilde{a}^{ij}(x)-\bar{a}^{ij}|\ dx\leq\gamma_{0}.
\end{equation}
Moreover, for any $x_{0}\in\partial B_{1}$ and $r\in(0,r_{0}]$, we have
$$
\fint_{B_{r}(x_{0})}|\tilde{a}^{ij}(x)-(\tilde{a}^{ij})_{B_{r}(x_{0})}|\ dx\leq\gamma_{0}.
$$
Applying \cite[Theorem 2.5]{d1} to our case and using the argument in \cite[Sections 8.5 and 11.3]{k}, we get

\begin{lemma}\label{solvability}
For any $p\in(1,\infty)$, there exists a small constant $\gamma_{0}=\gamma_{0}(n,p,\delta)\in(0,1)$ such that the following hold.
\begin{enumerate}
\item
For any $u\in W^{2,p}(B_{1})\cap W_{0}^{1,p}(B_{1})$,
$$\lambda\|u\|_{L^{p}(B_{1})}+\sqrt{\lambda}\|Du\|_{L^{p}(B_{1})}+\|D^{2}u\|_{L^{p}(B_{1})}\leq C\|\tilde{L}u-\lambda u\|_{L^{p}(B_{1})},$$
provided that $\lambda\geq\lambda_{0}$, where $C$ and $\lambda_{0}\geq0$ depend on $n,p,\delta$, $\Lambda$, and $r_0$.

\item
For any $\lambda>\lambda_{0}$ and $f\in L^{p}(B_{1})$, \eqref{approxi sol} admits a unique solution $u\in W^{2,p}(B_{1})\cap W_{0}^{1,p}(B_{1})$. In the case when $c\le 0$, we can take $\lambda=0$.
\end{enumerate}
\end{lemma}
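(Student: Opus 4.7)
The strategy is to cast $\tilde{L}-\lambda$ into the framework of \cite[Theorem 2.5]{d1}, then extract the $\lambda$-dependence via a dimension-lifting argument, and finally invoke the method of continuity for solvability.

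First I would verify that $\tilde{a}^{ij}$ fits the hypotheses of \cite[Theorem 2.5]{d1} on $B_{1}$. In the interior of $B_{1}$ the partial VMO condition \eqref{condi A} is given by assumption. Near $\partial B_{1}$, because $\eta\equiv 0$ on $B_{1}\setminus B_{7/8}$, the matrix $\tilde{a}^{ij}$ is the constant $\delta\,\delta_{ij}$ there; choosing $r_{0}<1/8$ makes the boundary VMO hypothesis trivially satisfied on every boundary ball of radius at most $r_{0}$. Theorem 2.5 of \cite{d1} then yields the non-$\lambda$ interior--boundary estimate
$$\|D^{2}u\|_{L^{p}(B_{1})}\le C\bigl(\|\tilde{a}^{ij}D_{ij}u\|_{L^{p}(B_{1})}+\|u\|_{L^{p}(B_{1})}\bigr)$$
for every $u\in W^{2,p}(B_{1})\cap W_{0}^{1,p}(B_{1})$.

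Next I would promote this to the $\lambda$-weighted estimate of part (1) following the standard scheme in \cite[Sections~8.5 and 11.3]{k}. Introduce an auxiliary variable $y\in\bR$, define $v(x,y):=u(x)\zeta(y)\cos(\sqrt{\lambda}\,y)$ for a smooth cutoff $\zeta$ supported in a bounded interval, and observe that $v$ solves an $(n+1)$-dimensional non-divergence equation whose leading matrix is the block diagonal $\operatorname{diag}(\tilde{a}^{ij},1)$. The added constant entry in the $y$-direction does not enlarge the partial VMO constant, so the estimate from the previous paragraph applies to $v$ on a suitable slab. Differentiating $\cos(\sqrt{\lambda}y)$ produces the $\sqrt{\lambda}$- and $\lambda$-weighted terms in $u$, yielding exactly the terms $\lambda\|u\|_{L^{p}}$ and $\sqrt{\lambda}\|Du\|_{L^{p}}$ on the left-hand side. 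Finally, the lower-order contributions satisfy $\|b^{i}D_{i}u\|_{L^{p}}+\|cu\|_{L^{p}}\le C\Lambda(\|Du\|_{L^{p}}+\|u\|_{L^{p}})$, and for $\lambda\ge\lambda_{0}$ with $\lambda_{0}$ chosen large enough in terms of $\Lambda$ they can be absorbed into the left-hand side, completing (1).

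For the solvability assertion in part (2), I would invoke the method of continuity along $L_{t}:=(1-t)\Delta+t\tilde{L}-\lambda$ for $t\in[0,1]$. Each $L_{t}$ satisfies the same structural assumptions with uniform constants, so part (1) yields a uniform a priori bound $\|u\|_{W^{2,p}(B_{1})}\le C\|L_{t}u\|_{L^{p}(B_{1})}$. At $t=0$ the equation $\Delta u-\lambda u=f$ with zero Dirichlet data is classically solvable, and the continuity method then delivers solvability at $t=1$. For the final claim that $\lambda=0$ is admissible when $c\le 0$, the Aleksandrov--Bakelman--Pucci maximum principle rules out nontrivial solutions of the homogeneous problem, and uniqueness combined with the a priori estimate yields existence via the same continuity argument run with $\lambda=0$. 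The main obstacle I anticipate is verifying in step two that the lifted coefficients on $\bR^{n+1}$ still satisfy \eqref{condi A} with the same $\gamma_{0}$ (and an appropriately adjusted $r_{0}$) in the coordinate system used by \cite{d1}; since the appended coefficient is simply the constant $1$ this is a direct check, but it must be carried out so that the constants in (1) do not implicitly depend on $\lambda$.
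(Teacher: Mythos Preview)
Your proposal is correct and follows essentially the same route as the paper, which simply cites \cite[Theorem 2.5]{d1} together with the arguments in \cite[Sections 8.5 and 11.3]{k}; your verification of the partial VMO hypothesis, the Agmon dimension-lifting to extract the $\lambda$-weights, and the method of continuity are precisely the contents of those references. The only minor deviation is in the $c\le 0$ endpoint: the paper (see the Appendix, proof of Theorem~\ref{solvability weight Lp}) reaches $\lambda=0$ via the barrier/conjugation trick of \cite[Section 11.3]{k} rather than ABP, which has the advantage of producing the needed $L^{p}$ a~priori bound directly for all $p\in(1,\infty)$, whereas your ABP sketch gives uniqueness but still requires a word (e.g.\ Fredholm/compactness of $(\tilde{L}-\lambda_{0})^{-1}:L^{p}\to L^{p}$) to close the estimate when $p<n$.
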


%\begin{remark}\label{rek sol}
% (see, Theorem 11.3.2 in Chapter 11 of \cite{k}).
%\end{remark}

In the Appendix, we shall prove a more general result in weighted Sobolev spaces. From \cite[Theorem 2.5]{d1}, the Sobolev embedding theorem, and a standard localization argument which is similar to that in the proof of \cite[Lemma 4]{d}, we also have the following interior estimates.
\begin{lemma}\label{nondiv lemma}
Let $p\in(1,\infty)$. there exists a small constant $\gamma_{0}=\gamma_{0}(n,p,\delta)\in(0,1)$ such that the following hold. Assume that $u\in W_{\text{loc}}^{2,p}$ satisfies $a^{ij}D_{ij}u+b^{i}D_{i}u+cu=f$ in $B_{1}$, where $f\in L^{p}(B_{1})$. Then there exists a constant $C=C(n,\delta,\Lambda,p,r_{0})$ such that
$$\|u\|_{W^{2,p}(B_{1/2})}\leq C\big(\|u\|_{L^{1}(B_{1})}+\|f\|_{L^{p}(B_{1})}\big).$$
In particular, if $p>n$, it holds that
$$|u|_{1,\gamma;B_{1/2}}\leq C\big(\|u\|_{L^{1}(B_{1})}+\|f\|_{L^{p}(B_{1})}\big),$$
where $\gamma=1-n/p$ and $C$ depends only on $n$, $p$, $\delta$, $\Lambda$, and $r_0$.
\end{lemma}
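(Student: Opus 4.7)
The plan is to reduce this interior estimate directly to \cite[Theorem 2.5]{d1} via the localization argument of \cite[Lemma 4]{d}. Since $a^{ij}$ satisfies the partial VMO condition \eqref{condi A} with $\gamma_0=\gamma_0(n,p,\delta)$ chosen small, \cite[Theorem~2.5]{d1} yields the global a priori bound
\[
\|D^{2}v\|_{L^{p}(B_{1})}\le C\bigl(\|a^{ij}D_{ij}v\|_{L^{p}(B_{1})}+\|v\|_{L^{p}(B_{1})}\bigr)
\]
for every $v\in W^{2,p}(B_{1})\cap W^{1,p}_{0}(B_{1})$. To pass to an interior estimate for the full operator $Lu=a^{ij}D_{ij}u+b^{i}D_{i}u+cu=f$, I would fix $1/2\le r<R\le 3/4$, pick a cutoff $\eta\in C_{c}^{\infty}(B_{R})$ with $\eta\equiv 1$ on $B_{r}$, $|D\eta|\le C/(R-r)$, $|D^{2}\eta|\le C/(R-r)^{2}$, and apply the bound to $v=\eta u\in W^{2,p}\cap W_{0}^{1,p}$. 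Using
\[
a^{ij}D_{ij}(\eta u)=\eta\bigl(f-b^{i}D_{i}u-cu\bigr)+2a^{ij}D_{i}\eta\,D_{j}u+a^{ij}D_{ij}\eta\,u
\]
together with $\|b\|_\infty,\|c\|_\infty\le\Lambda$, one obtains
\[
\|D^{2}u\|_{L^{p}(B_{r})}\le C\|f\|_{L^{p}(B_{R})}+\frac{C}{R-r}\|Du\|_{L^{p}(B_{R})}+\frac{C}{(R-r)^{2}}\|u\|_{L^{p}(B_{R})}.
\]

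Next I would apply the standard interpolation $\|Du\|_{L^{p}(B_{R})}\le\varepsilon(R-r)\|D^{2}u\|_{L^{p}(B_{R})}+C\varepsilon^{-1}(R-r)^{-1}\|u\|_{L^{p}(B_{R})}$ and the iteration-on-radii lemma of \cite[Ch.~V]{g} to absorb the $D^{2}u$ term into the left side and shrink the annuli, giving
\[
\|u\|_{W^{2,p}(B_{1/2})}\le C\bigl(\|u\|_{L^{p}(B_{1})}+\|f\|_{L^{p}(B_{1})}\bigr).
\]
To replace the $L^{p}$ norm of $u$ on the right by the $L^{1}$ norm, I would invoke the Gagliardo--Nirenberg--type inequality
\[
\|u\|_{L^{p}(B_{R})}\le\varepsilon\|D^{2}u\|_{L^{p}(B_{R})}+C\varepsilon^{-\theta}\|u\|_{L^{1}(B_{R})}
\]
inside the same iteration scheme and absorb the $D^{2}u$ contribution as before. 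This produces the first asserted inequality. The second assertion, for $p>n$, is then an immediate consequence of the Sobolev embedding $W^{2,p}(B_{1/2})\hookrightarrow C^{1,\gamma}(\overline{B_{1/2}})$ with $\gamma=1-n/p$, as the statement itself hints.

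The main obstacle is conceptual rather than computational: \cite[Theorem~2.5]{d1} is formulated only for the principal part $a^{ij}D_{ij}$ with zero boundary data, so one must check that introducing the cutoff $\eta$, carrying along the bounded lower-order terms $b^{i}D_{i}u+cu$, and iterating over shrinking radii do not disturb the partial VMO smallness \eqref{condi A}. This is fine because \eqref{condi A} is a scale-invariant smallness condition that remains valid on every ball contained in the cutoff region, so the lower-order and commutator terms only enter as right-hand-side perturbations that can be absorbed by the interpolation and iteration described above.
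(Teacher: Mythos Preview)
Your proposal is correct and follows exactly the route the paper indicates: the paper does not give a detailed proof but simply says the lemma follows from \cite[Theorem~2.5]{d1}, the Sobolev embedding theorem, and a standard localization argument as in \cite[Lemma~4]{d}, which is precisely the cutoff/interpolation/iteration scheme you spell out. Your treatment of the lower-order terms and the reduction from $L^{p}$ to $L^{1}$ on the right-hand side via Gagliardo--Nirenberg are the expected standard ingredients, and the final $C^{1,\gamma}$ bound is indeed just Sobolev embedding.
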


The adjoint operator corresponding to $\tilde{L}$ is defined by
$$\tilde{L}^{*}u:=D_{ij}(\tilde{a}^{ij}u)-D_{i}(b^{i}u)+cu.$$
\begin{lemma}\label{sol adjoint}
Let $q\in(1,\infty)$. Assume that $g=(g^{ij})_{i,j=1}^{n}\in L^{q}(B_{1})$. Then, there is a $\lambda_{0}\geq0$ depending on $n,q,\delta,\Lambda$, and $r_{0}$, such that for any $\lambda>\lambda_{0}$,
\begin{align}\label{adjoint sol}
\begin{cases}
\tilde{L}^{*}u-\lambda u=\Div^{2}g&\quad\mbox{in}~B_{1},\\
u=\frac{g\nu\cdot\nu}{A\nu\cdot\nu}&\quad\mbox{on}~\partial B_{1}
\end{cases}
\end{align}
admits a unique adjoint solution $u\in L^{q}(B_{1})$. Moreover, the following estimate holds,
$$\|u\|_{L^{q}(B_{1})}\leq C\|g\|_{L^{q}(B_{1})},$$
where $C=C(n,q,\delta,\Lambda,r_{0})$. In the case when $ c\le 0$, we can take $\lambda=0$.
\end{lemma}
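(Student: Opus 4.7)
The plan is to obtain Lemma \ref{sol adjoint} as a duality corollary of the primal solvability statement, Lemma \ref{solvability}, with no additional PDE analysis. Fix the conjugate exponent $p=q/(q-1)$ and choose $\lambda_{0}$ at least as large as the threshold produced by Lemma \ref{solvability} for the exponent $p$ (and take $\lambda_{0}=0$ in the sign-favorable case $c\leq 0$). Lemma \ref{solvability} then tells us that $\tilde{L}-\lambda\colon W^{2,p}(B_{1})\cap W_{0}^{1,p}(B_{1})\to L^{p}(B_{1})$ is a bounded linear bijection with bounded inverse, so every $f\in L^{p}(B_{1})$ arises as $f=\tilde{L}v_{f}-\lambda v_{f}$ for a unique $v_{f}$ obeying $\|D^{2}v_{f}\|_{L^{p}(B_{1})}\leq C\|f\|_{L^{p}(B_{1})}$.

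Next I would define a linear functional $T\colon L^{p}(B_{1})\to\mathbb{R}$ by $T(f):=\int_{B_{1}}\mbox{tr}(g\,D^{2}v_{f})\,dx$. H\"older's inequality together with the $W^{2,p}$ bound immediately gives
$$|T(f)|\leq\|g\|_{L^{q}(B_{1})}\|D^{2}v_{f}\|_{L^{p}(B_{1})}\leq C\|g\|_{L^{q}(B_{1})}\|f\|_{L^{p}(B_{1})},$$
so $T\in(L^{p}(B_{1}))^{*}$. The Riesz representation theorem then yields a unique $u\in L^{q}(B_{1})$ with $\int_{B_{1}}uf\,dx=T(f)$ for every $f\in L^{p}(B_{1})$, together with the desired bound $\|u\|_{L^{q}(B_{1})}\leq C\|g\|_{L^{q}(B_{1})}$.

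To confirm that this $u$ is the adjoint solution, take an arbitrary $v\in W^{2,p}(B_{1})\cap W_{0}^{1,p}(B_{1})$, set $f:=\tilde{L}v-\lambda v$, and note $v_{f}=v$. Substituting into the representation identity yields
$$\int_{B_{1}}u\,(\tilde{L}v-\lambda v)\,dx=\int_{B_{1}}\mbox{tr}(g\,D^{2}v)\,dx,$$
which is precisely the weak adjoint equation in the natural $\lambda$-shifted extension of Definition \ref{def adjoint}; a formal integration by parts on the right shows that this identity simultaneously encodes $\tilde{L}^{*}u-\lambda u=\Div^{2}g$ in the interior and the prescribed trace on $\partial B_{1}$. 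Uniqueness is immediate from surjectivity: if $w\in L^{q}(B_{1})$ satisfies the homogeneous adjoint identity, then $\int_{B_{1}}wf\,dx=0$ for every $f\in L^{p}(B_{1})$, and hence $w\equiv 0$.

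I do not expect any substantive obstacle here, because the heavy analytic lifting has already been done in Lemma \ref{solvability}; what remains is the standard duality and Riesz representation machinery. The only bookkeeping points worth watching are choosing $\lambda_{0}$ large enough for the exponent $p$ in Lemma \ref{solvability}, and, in the case $c\leq 0$, recording that Lemma \ref{solvability} permits $\lambda=0$ so that the construction above applies with $\lambda_{0}=0$.
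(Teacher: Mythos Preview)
Your proposal is correct and follows essentially the same approach as the paper: both arguments invoke Lemma \ref{solvability} for the conjugate exponent $p=q/(q-1)$, define the functional $T(f)=\int_{B_{1}}\mathrm{tr}(g\,D^{2}v_{f})\,dx$, bound it by H\"older and the $W^{2,p}$ estimate, and then apply the Riesz representation theorem to produce the unique $u\in L^{q}(B_{1})$ together with the stated bound. Your write-up in fact makes the verification of the adjoint identity and the uniqueness argument slightly more explicit than the paper's proof.
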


\begin{proof}
For $f\in L^{p}(B_{1})$ with ${1}/{p}+{1}/{q}=1$, it follows from Lemma \ref{solvability} that there exists a unique solution $v\in W^{2,p}(B_{1})\cap W_{0}^{1,p}(B_{1})$ such that $\tilde{L}v-\lambda v=f$ a.e. in $B_{1}$, provided that $\lambda>\lambda_{0}$. Moreover,
\begin{align}\label{est W2p}
\lambda\|v\|_{L^{p}(B_{1})}+\sqrt{\lambda}\|Dv\|_{L^{p}(B_{1})}+\|D^{2}v\|_{L^{p}(B_{1})}\leq C\|f\|_{L^{p}(B_{1})}.
\end{align}
Define the functional $T: L^{p}(B_{1})\rightarrow\mathbb R$ by
\begin{align}\label{def func}
T(f):=\int_{B_{1}}\mbox{tr}(gD^{2}v)\ dx.
\end{align}
Combining \eqref{est W2p}, \eqref{def func}, and H\"{o}lder's inequality, we have
\begin{align*}
|T(f)|\leq \|D^{2}v\|_{L^{p}(B_{1})}\|g\|_{L^{q}(B_{1})}\leq C\|f\|_{L^{p}(B_{1})}\|g\|_{L^{q}(B_{1})}.
\end{align*}
Hence, $T$ is a bounded functional on $L^{p}(B_{1})$. By the Riesz representation theorem, there exists a unique $u\in L^{q}(B_{1})$, such that
\begin{align}\label{riesz}
T(f)=\int_{B_{1}}uf\ dx,\quad\forall~f\in L^{p}(B_{1}).
\end{align}
Moreover,
$$\|u\|_{L^{q}(B_{1})}\leq C\|g\|_{L^{q}(B_{1})}.$$
It follows from \eqref{def func} and \eqref{riesz} that
$$\int_{B_{1}}u(\tilde{L}v-\lambda v)\ dx=\int_{B_{1}}\mbox{tr}(gD^{2}v)\ dx,$$
that is, $u\in L^{q}(B_{1})$ is the unique adjoint solution to \eqref{adjoint sol}.
\end{proof}

Now we denote
$$\bar{L}_{0}u:=\bar{a}^{ij}(x^{n})D_{ij}u,$$
where $\bar{a}^{ij}(x^{n})$ satisfies the same ellipticity and boundedness conditions as $a^{ij}(x)$. %The next lemma is an analogy of Lemma 9 in \cite{d}.
\begin{lemma}\label{lemma DDDx'}
Assume that $u\in C_{\text{loc}}^{1,1}$ satisfies $\bar{L}_{0}u=\bar{f}(x^{n})$ in $B_{1}$, where $\bar{f}\in L^{\infty}(B_{1})$. Then for any $p\in(0,\infty)$, there exists a constant $C=C(n,\delta,\Lambda,p)$ such that for any $\mathbf c\in\mathbb R^{(n-1)\times(n-1)}$,
\begin{align}\label{D2Dx' u}
\|D^{2}D_{x'}u\|_{L^{\infty}(B_{1/2})}\leq C\|D_{x'}^{2}u-\mathbf c\|_{L^{p}(B_{1})}.
\end{align}
\end{lemma}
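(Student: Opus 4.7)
The key structural feature is that both $\bar{a}^{ij}$ and $\bar{f}$ depend only on the normal variable $x^{n}$, so the operator $\bar{L}_{0}$ is invariant under translations in the $x'$ directions and the right-hand side is annihilated by any $x'$-derivative. Consequently, formally differentiating the equation $\bar L_{0}u=\bar f(x^{n})$ once or twice in $x'$ gives $\bar L_{0}(D_{x'}u)=0$ and $\bar L_{0}(D_{x'}^{2}u)=0$; since $\mathbf c$ is constant, $v:=D_{x'}^{2}u-\mathbf c$ satisfies $\bar L_{0}v=0$ componentwise. Because $u\in C^{1,1}_{\text{loc}}$, these identities can be made rigorous either by using tangential difference quotients $\delta_{h,\alpha}u$ (which are strong solutions of $\bar L_{0}\delta_{h,\alpha}u=0$ uniformly in $h$) or by testing against $\bar L_{0}$-adjoint test functions.

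\textbf{Step 1: Reduce $D^{2}D_{x'}u$ to $DD_{x'}^{2}u$.} For any third derivative $D_{i}D_{j}D_{x'_{\gamma}}u$ with $\gamma\in\{1,\ldots,n-1\}$: if at least one of $i,j$ is tangential, it is already a component of $DD_{x'}^{2}u$. The remaining case is $i=j=n$. Differentiating the equation in $x'_{\gamma}$ gives $\bar a^{ij}(x^{n})D_{ij}D_{x'_{\gamma}}u=0$, so by ellipticity $\bar a^{nn}\ge\delta$ we can solve
\[
D_{nn}D_{x'_{\gamma}}u=-\frac{1}{\bar a^{nn}}\sum_{(i,j)\neq(n,n)}\bar a^{ij}D_{ij}D_{x'_{\gamma}}u,
\]
and every term on the right is a component of $DD_{x'}^{2}u$. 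Hence $\|D^{2}D_{x'}u\|_{L^{\infty}(B_{1/2})}\le C(n,\delta,\Lambda)\,\|DD_{x'}^{2}u\|_{L^{\infty}(B_{1/2})}$.

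\textbf{Step 2: Interior estimate for $\bar L_{0}v=0$.} The plan is to prove that for any $p\in(0,\infty)$ and any $v$ with $\bar L_{0}v=0$ in $B_{1}$,
\[
\|Dv\|_{L^{\infty}(B_{1/2})}\le C(n,\delta,\Lambda,p)\,\|v\|_{L^{p}(B_{1})}.
\]
Since $\bar a^{ij}(x^{n})$ is trivially partially VMO, Lemma \ref{nondiv lemma} applied at scales between $1/2$ and $1$, together with Sobolev embedding for some $q>n$, yields $\|Dv\|_{L^{\infty}(B_{3/4})}\le C\|v\|_{L^{1}(B_{1})}$. For $p\ge 1$ the conclusion then follows from H\"older's inequality. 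For $p\in(0,1)$ one uses the standard self-improving reverse H\"older property of solutions to homogeneous linear elliptic equations: interpolating the pointwise bound $\|v\|_{L^{\infty}(B_{r})}\le C r^{-n}\|v\|_{L^{1}(B_{2r})}$ with the trivial inequality $\|v\|_{L^{1}(B_{r})}\le \|v\|_{L^{\infty}(B_{r})}^{1-p}\|v\|_{L^{p}(B_{r})}^{p}\cdot|B_{r}|$ and absorbing via a Young-type argument across a family of concentric balls gives $\|v\|_{L^{1}(B_{3/4})}\le C\|v\|_{L^{p}(B_{1})}$ for every $p>0$.

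\textbf{Step 3: Combine.} Applying Step 2 componentwise to $v=D_{x'_{\alpha}}D_{x'_{\beta}}u-c_{\alpha\beta}$ (for $\alpha,\beta=1,\ldots,n-1$) yields $\|DD_{x'}^{2}u\|_{L^{\infty}(B_{1/2})}\le C\|D_{x'}^{2}u-\mathbf c\|_{L^{p}(B_{1})}$, which combined with Step 1 gives \eqref{D2Dx' u}. The main obstacle is Step 2 for $p\in(0,1)$: the direct $W^{2,p}$ machinery does not cover sub-unit exponents and one must invoke the reverse H\"older self-improvement for homogeneous elliptic solutions. For $p\ge 1$ the argument is a routine combination of Lemma \ref{nondiv lemma}, Sobolev embedding, and H\"older's inequality.
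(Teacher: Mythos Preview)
Your proposal is correct and follows essentially the same approach as the paper: differentiate tangentially (via difference quotients) so that $v=D_{x'}^{2}u-\mathbf c$ satisfies $\bar L_{0}v=0$, apply the interior $W^{2,q}$ estimate of Lemma~\ref{nondiv lemma} with $q>n$ to bound $\|v\|_{L^{\infty}}+\|Dv\|_{L^{\infty}}$ by $\|v\|_{L^{2}}$, use the equation and $\bar a^{nn}\ge\delta$ to recover $D_{nn}D_{x'}u$ from $DD_{x'}^{2}u$, and finally pass from $L^{2}$ to $L^{p}$ for $p\in(0,1)$ by the interpolation $\|v\|_{L^{2}}\le\|v\|_{L^{p}}^{p/2}\|v\|_{L^{\infty}}^{1-p/2}$ together with the standard iteration lemma. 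The only difference is cosmetic ordering: you state the algebraic reduction (your Step~1) before the regularity step, whereas the paper first establishes the $L^{\infty}$ bound on $DD_{x'}^{2}u$ and then performs the reduction.
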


\begin{proof}
By using the finite difference quotient technique and applying Lemma \ref{nondiv lemma} with a slightly smaller domain, one can see that $D_{x'}^{2}u-\mathbf c\in W^{2,q}_{\text{loc}}(B_1)$ satisfies
$$\bar{L}_{0}(D_{x'}^{2}u-\mathbf c)=0\quad\mbox{in}~B_{1}$$
and
$$\|D_{x'}^{2}u-\mathbf c\|_{W^{2,q}(B_{1/2})}\leq C\|D_{x'}^{2}u-\mathbf c\|_{L^{2}(B_{3/4})}.$$
If $q>n$, we obtain
\begin{align}\label{DDx'2}
\|D_{x'}^{2}u-\mathbf c\|_{L^{\infty}(B_{1/2})}+\|DD_{x'}^{2}u\|_{L^{\infty}(B_{1/2})}\leq C\|D_{x'}^{2}u-\mathbf c\|_{L^{2}(B_{3/4})}.
\end{align}
Because $\bar{L}_{0}(D_{x'}u)=0$ in $B_{1}$, one can see that
$$D_{nn}D_{x'}u=\frac{-1}{\bar{a}^{nn}(x^{n})}\sum_{(i,j)\neq(n,n)}\bar{a}^{ij}(x^{n})D_{ij}D_{x'}u.$$
Therefore, using \eqref{DDx'2} and $\bar{a}^{nn}(x^{n})\geq\delta$, we have
$$\|D_{nn}D_{x'}u\|_{L^{\infty}(B_{1/2})}\leq C\|D_{x'}^{2}u-\mathbf c\|_{L^{2}(B_{3/4})},$$
which implies that
\begin{align}\label{D2xDx'}
\|D^{2}D_{x'}u\|_{L^{\infty}(B_{1/2})}\leq C\|D_{x'}^{2}u-\mathbf c\|_{L^{2}(B_{3/4})}.
\end{align}
For any $0<p<1<\infty$, by using H\"{o}lder's inequality, we get
\begin{align}\label{D2x'}
\|D_{x'}^{2}u-\mathbf c\|_{L^{2}(B_{4/5})}\leq \|D_{x'}^{2}u-\mathbf c\|_{L^{p}(B_{4/5})}^{\frac{p}{2}}\|D_{x'}^{2}u-\mathbf c\|_{L^{\infty}(B_{4/5})}^{1-\frac{p}{2}}.
\end{align}
Combining \eqref{DDx'2}, \eqref{D2x'}, and H\"{o}lder's inequality, we obtain
\begin{align*}
\|D_{x'}^{2}u-\mathbf c\|_{L^{\infty}(B_{3/4})}&\leq C\|D_{x'}^{2}u-\mathbf c\|_{L^{p}(B_{4/5})}^{\frac{p}{2}}\|D_{x'}^{2}u-\mathbf c\|_{L^{\infty}(B_{4/5})}^{1-\frac{p}{2}}\\
&\leq\frac{1}{2}\|D_{x'}^{2}u-\mathbf c\|_{L^{\infty}(B_{4/5})}+C\|D_{x'}^{2}u-\mathbf c\|_{L^{p}(B_{4/5})}.
\end{align*}
By a well-known iteration argument (see, for instance, \cite[Lemma 3.1 of Ch. V]{g}), we get
\begin{align}\label{all p case}
\|D_{x'}^{2}u-\mathbf c\|_{L^{\infty}(B_{3/4})}\leq C\|D_{x'}^{2}u-\mathbf c\|_{L^{p}(B_{1})},\quad \forall~p>0.
\end{align}
Coming back to \eqref{D2xDx'}, we obtain \eqref{D2Dx' u}.
%$$\|D^{2}D_{x'}u\|_{L^{\infty}(B_{1/2})}\leq C\|D_{x'}^{2}u-\mathbf c\|_{L^{p}(B_{1})},\quad \forall~p>0.$$
The lemma is thus proved.
\end{proof}

In the proof of Theorem \ref{thm C0}, we need to use the following
\begin{lemma}\label{div xn}
Assume $u\in C_{\text{loc}}^{0,1}$ satisfies
$$
L_{0}u:=D_{i}(\bar{a}^{ij}(x^{n})D_{j}u)=0
$$ in $B_1$.
%\begin{equation}\label{div eq}
%D_{i}(\bar{a}^{ij}(x^{n})D_{j}u)=0\quad\mbox{in}~B_{1}.
%\end{equation}
Then for any $p\in(0,\infty)$, there exists a constant $C=C(n,p,\delta,\Lambda)$ such that for any constant $c\in\mathbb R$, we have
\begin{align}\label{DDu}
\|Du\|_{L^{\infty}(B_{1/2})}\leq C\|u-c\|_{L^{p}(B_{1})}.
\end{align}
\end{lemma}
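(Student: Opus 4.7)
My plan is to mirror the proof of Lemma \ref{lemma DDDx'} in the divergence-form setting; after subtracting a constant I may take $c = 0$. Because $\bar a^{ij}$ depends only on $x^n$, the operator $L_0$ is invariant under translations in the tangential variables $x'$, so the standard finite-difference quotient argument shows that every tangential derivative $D_{x'}^m u$ is again a weak solution of $L_0 w = 0$ in a slightly smaller ball. Applying the classical De Giorgi--Nash--Moser local $L^\infty$--$L^2$ estimate together with iterated Cacciopoli inequalities will yield
\[
\|D_{x'}^m u\|_{L^\infty(B_{3/4})} \le C_m \|u\|_{L^2(B_1)}
\]
for every multi-index $m$ supported on the first $n-1$ coordinates; in particular, $D_\alpha u$ for $\alpha < n$ together with the tangential second derivatives $D_{\alpha\beta}u$ are $L^\infty(B_{3/4})$-bounded by $C\|u\|_{L^2(B_1)}$.

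Next, to control $D_n u$, I set the normal flux $F^n := \bar a^{nj}(x^n) D_j u$. Since $\bar a^{nn}\ge\delta$ and $\bar a^{n\alpha}D_\alpha u$ is already controlled by the first step, it remains to bound $F^n$ in $L^\infty(B_{1/2})$. I will test the weak form $\int \bar a^{ij} D_j u\, D_i\phi\,dx = 0$ against product cutoffs $\phi(x)=\chi(x')\psi(x^n)$ and integrate by parts in the $x'$-variables (legitimate since $\bar a$ has no $x'$-dependence), arriving at the distributional identity
\[
\frac{d}{dx^n}\int F^n(x',x^n)\chi(x')\,dx' = H(x^n),
\]
where $H$ assembles the tangential second derivatives of $u$ tested against derivatives of $\chi$, plus the mixed contribution $\bar a^{\alpha n}(x^n)D_n D_\alpha u$ which, after a second tangential IBP inside the $x'$-integral, becomes the $x^n$-derivative of the bounded quantity $Q_\alpha(x^n):=\int D_\alpha u\,\chi\,dx'$. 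A careful interpretation of this identity gives $|\int F^n(x',x^n)\chi(x')\,dx'| \le C\|\chi\|_{L^1}\|u\|_{L^2(B_1)}$ uniformly in $x^n$; varying $\chi$ then yields $\|F^n\|_{L^\infty(B_{1/2})} \le C\|u\|_{L^2(B_1)}$ by $L^1$--$L^\infty$ duality, and hence the corresponding bound on $D_n u$.

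Finally, passing from the $L^2$-based right-hand side to arbitrary $L^p$ with $p\in(0,\infty)$ follows from H\"older's inequality and the iteration lemma \cite[Lemma 3.1 of Ch.~V]{g}, exactly as in the final part of the proof of Lemma \ref{lemma DDDx'} culminating in \eqref{all p case}.

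The main obstacle I foresee lies in the second step, specifically the rigorous treatment of the mixed term $\bar a^{\alpha n}(x^n)D_n D_\alpha u$: it is only $L^2$ a priori, and after the tangential IBP it appears as the distributional $x^n$-derivative of the bounded average $Q_\alpha$ multiplied by the merely measurable factor $\bar a^{\alpha n}(x^n)$. Making sense of this product-distribution, and extracting an $L^\infty$ (rather than merely $W^{-1,\infty}$) bound on $\int F^n\chi\,dx'$, is the crux of the argument; one natural route is to mollify $\bar a$ in $x^n$ and pass to the limit in an estimate uniform in the mollification parameter, exploiting at the same time that $D_\alpha u$ itself solves $L_0 w = 0$ so that the first-step bounds remain available along the approximation.
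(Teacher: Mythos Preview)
Your overall architecture---reduce to $c=0$, exploit $x'$-invariance to control tangential derivatives, then recover $D_n u$ through the normal flux $U:=\bar a^{nj}(x^n)D_j u$, and finish by the $L^p$--$L^\infty$ iteration---matches the paper exactly. The divergence is in how you control $U$.

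The obstacle you flag in the second step is real for your chosen route, but the paper sidesteps it entirely. Instead of testing against product cutoffs and arriving at an ODE in $x^n$, the paper simply reads the equation $D_i(\bar a^{ij}(x^n)D_j u)=0$ as
\[
D_n U \;=\; -\sum_{i=1}^{n-1}\sum_{j=1}^{n}\bar a^{ij}(x^n)\,D_{ij}u,
\qquad
D_{x'}U \;=\; \sum_{j=1}^n \bar a^{nj}(x^n)\,D_{x'}D_j u .
\]
In the first identity every term has $i<n$, so it involves a derivative of $D_{x'}u$; likewise the second. Thus $DU$ is a bounded-coefficient combination of $DD_{x'}u$, which the paper already has in $L^q$ for \emph{every} $q<\infty$ (via the $W^{1,q}$ interior estimate from \cite[Lemma~2.5]{dx} applied to $D_{x'}u$). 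Hence $U\in W^{1,q}$ with $q>n$, Sobolev gives $U\in L^\infty$, and $D_n u$ follows from $\bar a^{nn}\ge\delta$. No product of a measurable coefficient with a distributional derivative ever appears.

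Two remarks on your version. First, De~Giorgi--Nash--Moser plus Caccioppoli gives $D_{x'}^m u\in L^\infty$ and $DD_{x'}u\in L^2$, but not $DD_{x'}u\in L^q$ for large $q$; the paper's direct argument needs the latter, which is why it quotes a $W^{1,q}$ estimate rather than DGNM alone. Second, your mollification plan would likely succeed, but it is doing work that the pointwise identity above makes unnecessary: once you see that $D_n U$ is expressible through $DD_{x'}u$ with the merely bounded factor $\bar a^{ij}(x^n)$ sitting \emph{outside} any derivative, there is nothing distributional to justify.
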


\begin{proof}
We first assume that $c=0$. It directly follows from \cite[Lemma 2.5]{dx} that for any $q\in(1,\infty)$,
\begin{align}\label{est u-c}
\|u\|_{W^{1,q}(B_{4/5})}\leq C\|u\|_{L^{2}(B_{1})}.
\end{align}
Then for $q>n$ by the Sobolev embedding theorem, we have
\begin{align*}%\label{infinit u}
\|u\|_{L^{\infty}(B_{4/5})}\leq C\|u\|_{L^{2}(B_{1})}.
\end{align*}
For $0<p<1$, by using a similar argument used in deriving \eqref{all p case}, we get
\begin{align}\label{u infinite}
\|u\|_{L^{\infty}(B_{4/5})}\leq C\|u\|_{L^{p}(B_{1})}.
\end{align}
For $k=1,\ldots,n-1$ and $h\in (0,1/12)$, since $\bar{a}^{ij}(x^{n})$ is independent of $x'$, we have $L_{0} (\delta_{h,k}u)=0$ in $B_{2/3}$. We thus use \cite[Lemma 2.5]{dx} again and \eqref{u infinite} to get
\begin{align*}
\|\delta_{h,k}u\|_{W^{1,q}(B_{1/2})}\leq C\|\delta_{h,k}u\|_{L^{2}(B_{2/3})}\leq C\|D_{x'}u\|_{L^{2}(B_{3/4})}
\leq C\|u\|_{L^{2}(B_{4/5})}\leq C\|u\|_{L^{p}(B_{1})},\quad p>0.
\end{align*}
Letting $h\rightarrow0$ gives
\begin{align}\label{Dx'u WW}
\|D_{x'}u\|_{W^{1,q}(B_{1/2})}\leq C\|u\|_{L^{p}(B_{1})},\quad p>0.
\end{align}
Moreover, notice that in $B_{1}$,
\begin{align*}
D_{n}U=-\sum_{i=1}^{n-1}\sum_{j=1}^{n}\bar{a}^{ij}D_{ij}u,\quad D_{x'}U=\sum_{j=1}^{n}\bar{a}^{nj}D_{x'}D_{j}u,
\end{align*}
where $U:=\bar{a}^{nj}(x^{n})D_{j}u$. Then by using \eqref{est u-c}, \eqref{u infinite}, \eqref{Dx'u WW}, and the boundedness of $\bar{a}^{nj}(x^{n})$, we obtain
\begin{align}\label{esti U}
\|U\|_{W^{1,q}(B_{1/2})}&=\|U\|_{L^{q}(B_{1/2})}+\|DU\|_{L^{q}(B_{1/2})}\nonumber\\
&\leq C\|Du\|_{L^{q}(B_{1/2})}+C\Big(\|D_{x'}U\|_{L^{q}(B_{1/2})}+\|D_{n}U\|_{L^{q}(B_{1/2})}\Big)\nonumber\\
&\leq C\|u\|_{W^{1,q}(B_{1/2})}+C\|DD_{x'}u\|_{L^{q}(B_{1/2})}\nonumber\\
&\leq C\|u\|_{L^{2}(B_{2/3})}+C\|D_{x'}u\|_{W^{1,q}(B_{1/2})}\leq C\|u\|_{L^{p}(B_{1})}.
\end{align}
Combining \eqref{Dx'u WW} and \eqref{esti U}, %we get
%$$\|D_{x'}u\|_{W^{1,q}(B_{1/2})}+\|U\|_{W^{1,q}(B_{1/2})}\leq C\|u\|_{L^{p}(B_{1})},\quad p>0.$$
by the Sobolev embedding theorem for $q>n$, we have
\begin{equation*}%\label{Dx' U}
\|D_{x'}u\|_{L^{\infty}(B_{1/2})}+\|U\|_{L^{\infty}(B_{1/2})}\leq C\|u\|_{L^{p}(B_{1})},\quad p>0.
\end{equation*}
Thus, by $\bar{a}^{nn}(x^{n})\geq\delta$, we get
$$\|Du\|_{L^{\infty}(B_{1/2})}\leq C\|u\|_{L^{p}(B_{1})},\quad p>0.$$
Now, replacing $u$ with $u-c$, we conclude \eqref{DDu}.
\end{proof}

We will also apply the following lemma, which is \cite[Lemma 2.7]{dk}.

\begin{lemma}\label{lemma omiga}
Let $\omega$ be a nonnegative bounded function. Suppose there is $c_{1},c_{2}>0$ and $0<\kappa<1$ such that for $\kappa t\leq s\leq t$ and $0<t<r$,
\begin{align}\label{equivalence}
c_{1}\omega(t)\leq \omega(s)\leq c_{2}\omega(t).
\end{align}
Then, we have
$$\sum_{i=0}^{\infty}\omega(\kappa^{i}r)\leq C\int_{0}^{r}\frac{\omega(t)}{t}\ dt,$$
where $C=C(\kappa,c_{1},c_{2})$.
\end{lemma}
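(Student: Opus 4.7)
The plan is to compare each term $\omega(\kappa^{i}r)$ in the series with an integral of $\omega(s)/s$ over the geometric interval $[\kappa^{i+1}r,\kappa^{i}r]$, and then telescope. The key observation is that the collection $\{[\kappa^{i+1}r,\kappa^{i}r]\}_{i\geq 0}$ partitions $(0,r]$ (up to endpoints) with each piece of fixed logarithmic length $\log(1/\kappa)$, and hypothesis \eqref{equivalence} guarantees that $\omega$ is comparable to $\omega(\kappa^{i}r)$ on the $i$-th piece.

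More precisely, fix $i\geq 0$ and set $t=\kappa^{i}r$. For any $s\in[\kappa^{i+1}r,\kappa^{i}r]$ we have $\kappa t\leq s\leq t$ with $0<t<r$ (or $t=r$ in the boundary case $i=0$, which is handled by a limiting argument or by replacing $r$ with $r+\varepsilon$), so \eqref{equivalence} yields
\begin{equation*}
\omega(s)\geq c_{1}\omega(\kappa^{i}r).
\end{equation*}
Dividing by $s$ and integrating in $s$ over $[\kappa^{i+1}r,\kappa^{i}r]$ gives
\begin{equation*}
\int_{\kappa^{i+1}r}^{\kappa^{i}r}\frac{\omega(s)}{s}\,ds\geq c_{1}\omega(\kappa^{i}r)\int_{\kappa^{i+1}r}^{\kappa^{i}r}\frac{ds}{s}=c_{1}\log(1/\kappa)\,\omega(\kappa^{i}r).
\end{equation*}

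Summing over $i=0,1,2,\dots$ and using that the intervals $[\kappa^{i+1}r,\kappa^{i}r]$ are pairwise disjoint with union $(0,r]$, the right-hand sides assemble into a single integral, so that
\begin{equation*}
c_{1}\log(1/\kappa)\sum_{i=0}^{\infty}\omega(\kappa^{i}r)\leq\sum_{i=0}^{\infty}\int_{\kappa^{i+1}r}^{\kappa^{i}r}\frac{\omega(s)}{s}\,ds=\int_{0}^{r}\frac{\omega(s)}{s}\,ds.
\end{equation*}
This yields the conclusion with $C=(c_{1}\log(1/\kappa))^{-1}$, which depends only on $\kappa$ and $c_{1}$; note that the upper bound $\omega(s)\leq c_{2}\omega(t)$ in \eqref{equivalence} is not actually used here, although it is natural to state it as part of the comparability assumption.

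There is no real obstacle in this argument; the only subtlety is the boundary case $i=0$ when the hypothesis reads $0<t<r$ strictly, which is handled by either applying the estimate on $(0,r')$ for each $r'<r$ and passing to the limit, or by observing that the hypothesis clearly extends to $t=r$ by continuity of the statement in $t$.
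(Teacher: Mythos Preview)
Your argument is correct and is the standard proof of this fact: compare each term $\omega(\kappa^{i}r)$ with the integral of $\omega(s)/s$ over the dyadic shell $[\kappa^{i+1}r,\kappa^{i}r]$ using the lower bound in \eqref{equivalence}, then sum. The paper does not actually supply its own proof of this lemma; it is quoted verbatim as \cite[Lemma~2.7]{dk}, so there is nothing further to compare against. Your remark that only the constant $c_{1}$ (and not $c_{2}$) is needed is also correct, and the handling of the endpoint $t=r$ by a limiting argument is adequate for the applications in the paper, where the comparability \eqref{equivalence} in fact holds for all $t$ up to and including $r$.
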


\cite[Lemma 4.1]{dek} can be extended as follows by replacing the exponent $p=2$ with a general $p\in(1,\infty)$. See also \cite[Lemma 6.3]{dx}.
\begin{lemma}\label{lemma weak II}
Let $\cD$ be a bounded domain in $\mathbb R^{n}$ satisfying
\begin{equation}\label{condition D}
|\cD_{r}(x)|\geq A_{0}r^{n}\quad\mbox{for~all}~x\in\overline{\cD}~\mbox{and}~r\in(0,\mbox{diam}\ \cD],
\end{equation}
where $A_{0}>0$ is a constant. Let $p\in (1,\infty)$ and $T$ be a bounded linear operator on $L^{p}(\cD)$. Suppose that for any $\bar{y}\in \cD$ and $0<r<\mu~ \mbox{diam}\ \cD$,  we have
$$\int_{\cD\setminus B_{cr}(\bar{y})}|Tb|\leq C_{0}\int_{\cD_{r}(\bar{y})}|b|$$
whenever $b\in L^{p}(\cD)$ is supported in $\cD_{r}(\bar{y})$, $\int_{\cD}b=0$, and $c>1$, $C_{0}>0$, $\mu\in(0,1)$ are constants. Then for any $g\in L^{p}(\cD)$ and any $t>0$, we have
$$|\{x\in \cD: |Tg(x)|>t\}|\leq\frac{C}{t}\int_{\cD}|g|,$$
where $C=C(n,c,C_{0},\cD,A_{0},\mu,\|T\|_{L^p\rightarrow L^p})$ is a constant.
\end{lemma}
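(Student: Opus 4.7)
The plan is to adapt the classical Calder\'on--Zygmund argument to the bounded domain $\cD$. Condition (\ref{condition D}) together with the trivial upper bound $|\cD_{r}(x)| \leq \omega_{n} r^{n}$ makes $\cD$ doubling and hence a space of homogeneous type, so a Whitney-type stopping-time decomposition on $\cD$ is available. The hypothesis on $T$ plays the role of the classical H\"ormander cancellation condition, already adapted to the geometry of $\cD$. The rest of the argument is the standard derivation of a weak-$(1,1)$ bound from an $L^{p}$ bound plus atomic cancellation, with some bookkeeping to respect the admissible-radius constraint $r < \mu\,\mbox{diam}\,\cD$.

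Fix $t>0$. There is a threshold $t_{0}$ depending on $n$, $A_{0}$, $\mu$, and $\mbox{diam}\,\cD$ such that if $t \leq t_{0}$, then $|\{|Tg|>t\}| \leq |\cD| \leq (C/t)\|g\|_{L^{1}(\cD)}$ and we are done. Otherwise, I would perform a Calder\'on--Zygmund decomposition of $g$ at level $t$, selecting cubes $Q_{k} = \cD_{r_{k}}(y_{k})$ by the usual stopping criterion (the average of $|g|$ first exceeds $t$). By (\ref{condition D}), the selected cubes satisfy $A_{0} r_{k}^{n} \leq |Q_{k}| \leq t^{-1}\int_{Q_{k}} |g|$, and for $t>t_{0}$ this forces $r_{k} < \mu\,\mbox{diam}\,\cD$. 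Decompose $g = g_{1} + \sum_{k} b_{k}$ with $b_{k} := (g - (g)_{Q_{k}})\chi_{Q_{k}}$ and $g_{1}$ the remainder. Then $|g_{1}| \leq C t$ a.e., $\|g_{1}\|_{L^{1}(\cD)} \leq \|g\|_{L^{1}(\cD)}$, $\mathop{\mathrm{supp}} b_{k} \subset Q_{k}$, $\int_{\cD} b_{k}=0$, $\sum_{k} \|b_{k}\|_{L^{1}(\cD)} \leq C\|g\|_{L^{1}(\cD)}$, and $\sum_{k} r_{k}^{n} \leq (C/t)\|g\|_{L^{1}(\cD)}$.

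Now split $|\{|Tg|>t\}| \leq |\{|Tg_{1}|>t/2\}| + |\{|T\sum_{k} b_{k}|>t/2\}|$. For the good part, Chebyshev's inequality and the $L^{p}$-boundedness of $T$ give
$$|\{|Tg_{1}|>t/2\}| \leq Ct^{-p}\|g_{1}\|_{L^{p}(\cD)}^{p} \leq Ct^{-p}(Ct)^{p-1}\|g_{1}\|_{L^{1}(\cD)} \leq (C/t)\|g\|_{L^{1}(\cD)}.$$
For the bad part, set $E := \bigcup_{k}(B_{cr_{k}}(y_{k}) \cap \cD)$; then $|E| \leq C\sum_{k} r_{k}^{n} \leq (C/t)\|g\|_{L^{1}(\cD)}$, and the hypothesis applied to each $b_{k}$ yields
$$\int_{\cD \setminus E} \Big|T\sum_{k} b_{k}\Big|\,dx \leq \sum_{k}\int_{\cD \setminus B_{cr_{k}}(y_{k})}|Tb_{k}|\,dx \leq C_{0}\sum_{k}\|b_{k}\|_{L^{1}(\cD)} \leq C\|g\|_{L^{1}(\cD)}.$$
Another application of Chebyshev then bounds $|\{x \in \cD\setminus E : |T\sum_{k} b_{k}(x)|>t/2\}|$ by $(C/t)\|g\|_{L^{1}(\cD)}$, and combining the estimates on $E$ and $\cD\setminus E$ with the good-part bound yields the desired inequality.

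The main obstacle is producing the decomposition on $\cD$ with all selected cubes strictly below the scale $\mu\,\mbox{diam}\,\cD$ at which the cancellation hypothesis is available. This is precisely what forces the small-$t$ regime to be peeled off first: only for $t>t_{0}$ does the volume bound $r_{k}^{n} \lesssim A_{0}^{-1}\|g\|_{L^{1}(\cD)}/t$ coming from (\ref{condition D}) keep every selected radius below $\mu\,\mbox{diam}\,\cD$. Once this reduction is in place, the rest of the proof is routine and parallels \cite[Lemma~4.1]{dek}, which treats the case $p=2$ verbatim.
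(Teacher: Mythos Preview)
The paper does not supply its own proof of this lemma; it is stated with a reference to \cite[Lemma~4.1]{dek} (the case $p=2$) and \cite[Lemma~6.3]{dx}. Your sketch is the standard Calder\'on--Zygmund argument adapted to a bounded domain of homogeneous type, which is exactly what those references carry out, so in substance your approach matches the intended one.

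One small slip to fix: the threshold $t_{0}$ cannot depend only on $n, A_{0}, \mu, \mbox{diam}\,\cD$ as you claim, since the inequality $|\cD| \leq (C/t)\|g\|_{L^{1}(\cD)}$ for all $t\le t_0$ forces $t_{0} \lesssim \|g\|_{L^{1}(\cD)}/|\cD|$. The correct cutoff is $t_{0} = \|g\|_{L^{1}(\cD)}\big/\bigl(A_{0}(\mu\,\mbox{diam}\,\cD)^{n}\bigr)$, which depends on $g$; with this choice both the trivial bound for $t \le t_{0}$ (using $|\cD|\le \omega_n(\mbox{diam}\,\cD)^n$) and the radius bound $r_{k} < \mu\,\mbox{diam}\,\cD$ for $t>t_{0}$ (from $A_0 r_k^n \le t^{-1}\|g\|_{L^1(\cD)}$) follow exactly as you indicate. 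This is cosmetic and does not affect the rest of the argument.
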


%Under the assumption that the sub-domains $\cD_{1},\dots,\cD_{M-1}$ are away from $\partial\cD$, we have
%\begin{lemma}\label{weighted sol}
%Let $p\in(1,\infty)$ and $w$ be an $A_{1}$ (or more generally $A_p$) weight. {\color{blue}Suppose that there exists a sufficiently small constant $\gamma_{0}=\gamma_{0}(n,p,\delta,[w]_{A_{1}})\in(0,1)$ and a constant $r_{0}\in(0,1)$ such that $a^{ij}$ satisfy \eqref{condi A} in the interior of $\cD$ and have partially VMO in all directions near the boundary.} The following hold true.
%\begin{enumerate}
%\item
%For any $u\in W_{w}^{2,p}(\cD)$ satisfying
%\begin{equation}\label{weight eq}
%\begin{cases}
%a^{ij}D_{ij}u=f&\quad\mbox{in}~\cD,\\
%u=0&\quad\mbox{on}~\partial\cD,
%\end{cases}
%\end{equation}
%where $f\in L_{w}^{p}(\cD)$, we have
%$$\|u\|_{W_{w}^{2,p}(\cD)}\leq C\|f\|_{L_{w}^{p}(\cD)},$$
%where $C=C(n,p,\delta,\Lambda,[w]_{A_{1}},\cD,r_{0})$.
%\item
%For any $f\in L_{w}^{p}(\cD)$, \eqref{weight eq} admits a unique solution $u\in W_{w}^{2,p}(\cD)$.
%\end{enumerate}
%\end{lemma}
%This is a generalization of \cite[Theorem 6.3]{dk2} with a fixed ``measurable'' direction.

\section{Proofs of Theorem \ref{thm} and Corollary \ref{coro}}\label{proof thm}

In this section, we shall first consider the solution $u\in W^{2,p}(\cD)$ to the equation \eqref{equations} without lower-order terms. Then for the general case, we move lower-order terms to the right-hand side and use the $L^{p}$-estimates in Lemma \ref{nondiv lemma}. %to finish the proofs of Theorem \ref{thm} and Corollary \ref{coro}.

\subsection{Proof of Theorem \ref{thm}}

We first assume that $b^{i}\equiv c\equiv0$. The general case will be outlined at the end of the proof. We fix $x_{0}\in B_{3/4}\cap \cD_{j_{0}}$, $0<r\leq 1/4$, and take a coordinate system associated with $x_{0}$ as in Section \ref{subsection domain}. We shall derive an a priori estimate of the modulus of continuity of $D_{xx'}u$ by assuming that $u\in C^{1,1}(B_{3/4})$. Denote
\begin{align*}
\bar{L}_{x'_{0}}u:=\bar{a}^{ij}(x'_{0},x^{n})D_{ij}u.
\end{align*}
As before, we modify the coefficients $\bar{a}^{ij}(x'_{0},x^{n})$ to get the following elliptic operator defined by
$$
\tilde{L}_{x'_{0}}u:=\tilde{a}^{ij}D_{ij}u,
$$
where $\tilde{a}^{ij}=\eta \bar{a}^{ij}(x'_{0},x^{n})+\delta(1-\eta)\delta_{ij}$  with $\eta\in C_{0}^{\infty}(B_{r}(x_{0}))$ satisfying
$$0\leq\eta\leq1,\quad\eta\equiv1~\mbox{in}~B_{2r/3}(x_{0}),
\quad|\nabla\eta|\leq {5}/{r}.$$
We present several lemmas (and their proofs) that will provide key estimates for the proof of Theorem \ref{thm}.

\begin{lemma}\label{weak est v}
Let $p\in(1,\infty)$ and $v\in W^{2,p}(B_{r}(x_{0}))\cap W_{0}^{1,p}(B_{r}(x_{0}))$ be a unique solution of
\begin{align*}
\begin{cases}
\tilde{L}_{x'_{0}}v=F\chi_{B_{r/2}(x_{0})}&~\mbox{in}~B_{r}(x_{0}),\\
v=0&~\mbox{on}~\partial B_{r}(x_{0}),
\end{cases}
\end{align*}
where $F\in L^p(B_{r/2}(x_0))$. Then for any $t>0$, we have
\begin{align*}
|\{x\in B_{r/2}(x_{0}): |D^{2}v(x)|>t\}|\leq\frac{C}{t}\|F\|_{L^{1}(B_{r/2}(x_{0}))},
\end{align*}
where $C=C(n,p,\delta)>0$.
\end{lemma}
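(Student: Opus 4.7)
The plan is to apply Lemma~\ref{lemma weak II} to the operator $T \colon F \mapsto D^{2}v$ on $\cD = B_{r/2}(x_{0})$. By the dilation $x \mapsto (x-x_{0})/r$, we may reduce to the case $r = 1$, $x_{0} = 0$ with constants independent of $r$ (ellipticity is preserved and $\tilde{a}^{ij}$ still depends on $x^n$ alone). The $L^{p}$-boundedness of $T$, which is the input hypothesis for Lemma~\ref{lemma weak II}, follows directly from Lemma~\ref{solvability}; note that here $b^{i} \equiv c \equiv 0$, so we may take $\lambda = 0$.

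The essential task is to verify the cancellation condition: for any ball $B_{\rho}(\bar{y})$ and any $b \in L^{p}$ supported in $B_{\rho}(\bar{y}) \cap B_{1/2}$ with $\int b = 0$, the solution $v$ of the associated Dirichlet problem with right-hand side $b\chi_{B_{1/2}}$ should satisfy
\[
\int_{B_{1/2} \setminus B_{c\rho}(\bar{y})} |D^{2}v|\, dx \le C \int_{B_{\rho}(\bar{y})} |b|\, dx
\]
for some absolute $c > 1$ and $C = C(n, p, \delta)$. I would split the integration region into dyadic annuli $A_{k}$ of inner radius $\sim 2^{k}\rho$ around $\bar{y}$. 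On each $A_{k}$, the function $v$ solves the homogeneous equation $\tilde{L}_{x'_{0}} v = 0$, and the crucial structural fact is that $\tilde{a}^{ij}$ depends only on $x^{n}$, so every tangential difference $D_{x'}^{j} v$ again solves the same homogeneous equation. Applying Lemma~\ref{nondiv lemma} iteratively to these tangential differences, and recovering $D_{nn} v$ from the equation itself (as in the proof of Lemma~\ref{lemma DDDx'}), I expect a pointwise bound of the shape
\[
\|D^{2}v\|_{L^{\infty}(A_{k})} \le C (2^{k}\rho)^{-n-2}\,\|v - \ell\|_{L^{1}(\widetilde A_{k})},
\]
where $\ell$ is any affine function and $\widetilde A_{k}$ is a slightly enlarged annulus.

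The zero-mean condition $\int b = 0$ is then exploited via the Green's function representation
$v(x) = \int [G(x,y) - G(x,\bar{y})]\, b(y)\,dy$, which together with the standard bound $|\nabla_{y}G(x,y)| \le C|x-y|^{1-n}$ for elliptic Green's functions on $B_{1}$ gives $|v(x)| \le C \rho\, |x-\bar{y}|^{1-n}\|b\|_{L^{1}}$ for $x$ away from $B_{\rho}(\bar{y})$. Inserting this into the pointwise $D^{2}v$-bound on $A_{k}$ and multiplying by $|A_{k}| \sim (2^{k}\rho)^{n}$ yields $\int_{A_{k}} |D^{2}v|\, dx \le C 2^{-k}\|b\|_{L^{1}}$, and summing the resulting geometric series delivers the cancellation estimate.

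I expect the main obstacle to be the annuli that come close to $\partial B_{1}$, where the interior Schauder/Sobolev estimates above break down and the Dirichlet boundary condition must be used. This should be manageable because, by the construction of $\eta$, we have $\tilde{a}^{ij} = \delta \delta_{ij}$ in a neighborhood of $\partial B_{1}$, so classical boundary regularity for the Laplacian with vanishing Dirichlet data applies in that region. Treating the annular piece closest to $\partial B_{1}$ separately by a boundary-regularity argument and combining it with the interior dyadic pieces should absorb that contribution into the same $C\|b\|_{L^{1}}$ bound, at which point Lemma~\ref{lemma weak II} yields the claimed weak-type $(1,1)$ estimate.
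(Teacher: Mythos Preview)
Your overall framework---scale to $r=1$, use Lemma~\ref{solvability} for the $L^p$ bound, and verify the cancellation hypothesis of Lemma~\ref{lemma weak II} on dyadic annuli---matches the paper. The divergence is in how you extract the decay on each annulus, and there your argument has a genuine gap.

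You invoke the ``standard bound $|\nabla_y G(x,y)|\le C|x-y|^{1-n}$'' for the Green's function of $\tilde L_{x'_0}$. This is \emph{not} standard here: $\tilde L_{x'_0}$ is a non-divergence operator whose leading coefficients $\bar a^{ij}(x'_0,x^n)$ are merely bounded measurable (piecewise constant in $x^n$). For such operators the Green's function is only known to be H\"older in $y$, not Lipschitz; the H\"older regularity is the regularity of adjoint solutions, which is exactly what the paper has to establish. The paper does this by normalising so that $\hat a^{nn}=1$, rewriting the adjoint equation as a genuine divergence-form equation in the variables $\breve a^{ij}$ (see \eqref{tilde a}--\eqref{trans}), and then applying De~Giorgi--Nash--Moser to get a H\"older estimate \eqref{De Giorgi} for the dual solution $v_2$. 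That H\"older exponent $\gamma$ is precisely the source of the $(r/R)^\gamma$ decay that makes the dyadic sum converge. Your Green's-function route, once patched, would have to pass through the same adjoint regularity statement; the paper's duality argument uses it more directly and avoids Green's functions altogether.

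A second, smaller issue: for the large annuli the enlarged region $\widetilde A_k$ will cross the transition zone $B_1\setminus B_{2/3}$ where $\eta$ varies and $\tilde a^{ij}$ depends on all of $x$, so your tangential-differentiation step (which needs coefficients depending only on $x^n$) is unavailable there. The paper sidesteps this entirely: for each $R$ it solves a \emph{global} adjoint problem on $B_1$ (so the boundary and transition region are absorbed into the $L^{p'}$ solvability estimate \eqref{es v2}), and uses the local structure of the coefficients only inside the small ball $B_{R/12}(\bar y)\subset B_{2/3}$ to obtain the H\"older bound on $v_2$. This clean separation---global solvability plus a purely local interior estimate where the coefficients are nice---is what makes the duality approach robust and is not reproduced by your direct annulus-by-annulus scheme.
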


\begin{proof}
For simplicity, we set $x_{0}=0$ and $r=1$. We modify the proof of \cite[Lemma 2.12]{dlk}. By using Lemma \ref{solvability}, we can see that the map $T: F\mapsto D^{2}v$ is a bounded linear operator on $L^{p}(B_{1/2})$, it suffices to show  that $T$ satisfies the hypothesis of Lemma \ref{lemma weak II}. We introduce a new matrix-valued function $\hat{a}^{ij}={\tilde{a}^{ij}}/{\tilde{a}^{nn}}$, so that $\hat{a}^{nn}=1$. Clearly, $\hat{a}^{ij}$ satisfies the ellipticity and boundedness conditions with a new ellipticity constant determined by $\delta$. Therefore, $v\in W^{2,p}(B_{1})\cap W_{0}^{1,p}(B_{1})$ satisfies
\begin{align*}
\begin{cases}
\hat{a}^{ij}D_{ij}v=\hat{F}\chi_{B_{1/2}}&\quad\mbox{in}~B_{1},\\
v=0&\quad\mbox{on}~\partial B_{1},
\end{cases}
\end{align*}
where $\hat{F}=F/{\tilde{a}^{nn}}$. It is sufficient to show
\begin{align*}
|\{x\in B_{1/2}: |D^{2}v(x)|>t\}|\leq\frac{C}{t}\|\hat{F}\|_{L^{1}(B_{1/2})}.
\end{align*}
We take $c=24$ and fix $\bar{y}\in B_{1/2}$, $r\in (0,1/4)$. Let $b\in L^{p}(B_{1})$ be supported in $B_{r}(\bar{y})\cap B_{1/2}$ with mean zero and $v_{1}\in W^{2,p}(B_{1})\cap W_{0}^{1,p}(B_{1})$ be a solution of
\begin{align}\label{eq v1}
\begin{cases}
\hat{a}^{ij}D_{ij}v_{1}=b&\quad\mbox{in}~B_{1},\\
v_{1}=0&\quad\mbox{on}~\partial B_{1},
\end{cases}
\end{align}
the solvability of which follows from Lemma \ref{solvability}.

For any $R\in [cr,1]$ such that $B_{1/2}\setminus B_{R}(\bar{y})\neq\emptyset$ and $g=(g^{ij})_{i,j=1}^{n}\in C_{0}^{\infty}((B_{2R}(\bar{y})\setminus B_{R}(\bar{y}))\cap B_{1/2})$, let $v_{2}\in W_{0}^{1,p'}(B_{1})$ be a weak solution of
\begin{align*}
\begin{cases}
D_{i}(\breve{a}^{ij}D_{j}v_{2}+D_{j}\breve{a}^{ij}v_{2})=\Div^{2}g&\quad\mbox{in}~B_{1},\\
v_{2}=0&\quad\mbox{on}~\partial B_{1},
\end{cases}
\end{align*}
where ${1}/{p}+{1}/{p'}=1$ and $\breve{A}=(\breve{a}^{ij})$ are defined as follows,
\begin{equation}\label{tilde a}
\begin{split}
\breve{a}^{nn}=1;\quad \breve{a}^{ij}=\hat{a}^{ij}~ \mbox{for}~i, j\in\{1,\ldots,n-1\};\\
\breve{a}^{nj}=2\hat{a}^{nj}~ \mbox{and}~\breve{a}^{j n}=0~ \mbox{for}~ j\in\{1,\ldots,n-1\}.
\end{split}
\end{equation}
It is easy to check that $\breve{A}$ satisfies the ellipticity and boundedness conditions with a new ellipticity constant determined by $\delta$. Also, $D_{j}\breve{a}^{ij},\ j=1,\dots,n-1$ is bounded
%by a {\it universal constant}
and $D_{n}\breve{a}^{in}=0$. Since $g=0$, $\breve{a}^{ij}=\breve{a}^{ij}(x'_{0},x^{n})=:\breve{a}^{ij}(x^{n})$, and $\breve{a}^{nn}=1$ in $B_{R/12}(\bar{y})\subset B_{2/3}$, we get
$$D_{i}(\breve{a}^{ij}(x^{n})D_{j}v_{2})=0\quad\mbox{in}~B_{R/12}(\bar{y}).$$
By the De Giorgi-Nash-Moser estimate, we see that $v_{2}$ is H\"{o}lder continuous in $B_{r}(\bar{y})$ and
\begin{align}\label{De Giorgi}
[v_{2}]_{\gamma;B_{r}(\bar{y})}\leq [v_{2}]_{\gamma;B_{R/24}(\bar{y})}\leq CR^{-\gamma-\frac{n}{p'}}\|v_{2}\|_{L^{p'}(B_{R/12}(\bar{y}))},
\end{align}
where $\gamma\in(0,1)$ and $C>0$ depending only on $n,\delta$ and $\Lambda$. On the other hand, one observe that
\begin{align}\label{trans}
&\sum_{i,j=1}^{n}D_{i}(\breve{a}^{ij}D_{j}v_{2}+D_{j}\breve{a}^{ij}v_{2})\nonumber\\
&=\sum_{i,j=1}^{n-1}D_{i}(\hat{a}^{ij}D_{j}v_{2}+D_{j}\hat{a}^{ij}v_{2})+2\sum_{j=1}^{n-1}D_{n}(\hat{a}^{nj}D_{j}v_{2}+D_{j}\hat{a}^{nj}v_{2})
+D_{n}(\hat{a}^{nn}D_{n}v_{2})\nonumber\\
&=\sum_{i,j=1}^{n-1}D_{ij}(\hat{a}^{ij}v_{2})+2\sum_{j=1}^{n-1}D_{nj}(\hat{a}^{nj}v_{2})
+D_{nn}v_{2}=\sum_{i,j=1}^{n}D_{ij}(\hat{a}^{ij}v_{2}).
\end{align}
Here, we used the fact that $\hat{a}^{nn}=1$. Therefore, we see that $v_{2}$ is also an adjoint solution of
\begin{align}\label{eq v2}
\begin{cases}
D_{ij}(\hat{a}^{ij}v_{2})=\Div^{2}g&\quad\mbox{in}~B_{1},\\
v_{2}=0&\quad\mbox{on}~\partial B_{1}.
\end{cases}
\end{align}
Hence, by using Lemma \ref{sol adjoint}, we get
\begin{align}\label{es v2}
\|v_{2}\|_{L^{p'}(B_{1})}\leq C\|g\|_{L^{p'}(B_{1})}=
C\|g\|_{L^{p'}((B_{2R}(\bar{y})\setminus B_{R}(\bar{y}))\cap B_{1/2})}.
\end{align}
By \eqref{eq v1}, \eqref{eq v2}, and the hypothesis on $b$, we have
$$\int_{(B_{2R}(\bar{y})\setminus B_{R}(\bar{y}))\cap B_{1/2}}D_{ij}v_{1}g^{ij}=\int_{B_{r}(\bar{y})\cap B_{1/2}}bv_{2}=\int_{B_{r}(\bar{y})\cap B_{1/2}}b\big(v_{2}-v_2(\bar y)\big).$$
Then by using \eqref{De Giorgi} and \eqref{es v2}, we bound the absolute value of the right-hand side above by
\begin{align*}
&\|b\|_{L^{1}(B_{r}(\bar{y})\cap B_{1/2})}\|v_{2}-v_2(\bar y)\|_{L^{\infty}(B_{r}(\bar{y})\cap B_{1/2})}\leq\|b\|_{L^{1}(B_{r}(\bar{y})\cap B_{1/2})}[v_{2}]_{\gamma;B_{r}(\bar{y})\cap B_{1/2}}r^{\gamma}\\
&\leq C\big(\frac{r}{R}\big)^{\gamma}R^{-\frac{n}{p'}}\|b\|_{L^{1}(B_{r}(\bar{y})\cap B_{1/2})}\|v_{2}\|_{L^{p'}(B_{R/12}(\bar{y}))}\\
&\leq C\big(\frac{r}{R}\big)^{\gamma}R^{-\frac{n}{p'}}\|b\|_{L^{1}(B_{r}(\bar{y})\cap B_{1/2})}\|g\|_{L^{p'}((B_{2R}(\bar{y})\setminus B_{R}(\bar{y}))\cap B_{1/2})}.
\end{align*}
By duality, we have
$$\|D^{2}v_{1}\|_{L^{p}((B_{2R}(\bar{y})\setminus B_{R}(\bar{y}))\cap B_{1/2})}\leq C\big(\frac{r}{R}\big)^{\gamma}R^{-\frac{n}{p'}}\|b\|_{L^{1}(B_{r}(\bar{y})\cap B_{1/2})}.$$
Hence, by H\"{o}lder's inequality, we get
\begin{align}\label{dilation DDv}
\|D^{2}v_{1}\|_{L^{1}((B_{2R}(\bar{y})\setminus B_{R}(\bar{y}))\cap B_{1/2})}\leq C\big(\frac{r}{R}\big)^{\gamma}\|b\|_{L^{1}(B_{r}(\bar{y})\cap B_{1/2})}.
\end{align}
Let $N$ be the smallest positive integer such that $B_{1/2}\subset B_{2^{N}cr}(\bar{y})$. By taking $R=cr, 2cr,\ldots,2^{N-1}cr$ in \eqref{dilation DDv} and summarizing, we have
\begin{align*}
\int_{B_{1/2}\setminus B_{cr}(\bar{y})}|D^{2}v_{1}|\ dx\leq C\sum_{k=1}^{N}2^{-\gamma k}\|b\|_{L^{1}(B_{r}(\bar{y})\cap B_{1/2})}\leq C\int_{B_{r}(\bar{y})\cap B_{1/2}}|b|\ dx.
\end{align*}
Therefore, $T$ satisfies the hypothesis of Lemma \ref{lemma weak II}, and the proof is finished.
\end{proof}

Denote
$$\phi(x_{0},r):=\inf_{\mathbf q\in\mathbb R^{n\times(n-1)}}\left(\fint_{B_{r}(x_{0})}|D_{xx'}u-\mathbf q|^{q}\ dx\right)^{1/q},$$
where $q\in (0,1)$ is some fixed exponent. First of all, by H\"{o}lder's inequality, we have
\begin{align}\label{est phi2}
\phi(x_{0},r)\leq\left(\fint_{B_{r}(x_{0})}|D_{xx'}u|^{q}\ dx\right)^{1/q}\leq Cr^{-n}\|D_{xx'}u\|_{L^{1}(B_{r}(x_{0}))},
\end{align}
where $C=C(n)$.

\begin{lemma}\label{lemma itera}
For any $\gamma\in (0,1)$ and $0<\rho\leq r\leq 1/4$, we have
\begin{align}\label{est phi'}
\phi(x_{0},\rho)\leq C\Big(\frac{\rho}{r}\Big)^{\gamma}r^{-n}\|D_{xx'}u\|_{L^{1}(B_{r}(x_{0}))}+C\tilde{\omega}_{A}(\rho)\|D^{2}u\|_{L^{\infty}(B_{r}(x_{0}))}+C\tilde{\omega}_{f}(\rho),
\end{align}
where $C=C(n,p,\delta,\gamma)$, and $\tilde\omega_{\bullet}(t)$ is a Dini function derived from $\omega_{\bullet}(t)$.
\end{lemma}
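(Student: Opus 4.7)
The plan is to prove a Campanato-type one-step decay estimate and iterate along a geometric sequence of scales. Specifically, I would establish that for $\kappa\in(0,1/2)$ small enough (depending on $\gamma$) and every $0<s\le r$,
\begin{equation*}
\phi(x_0,\kappa s)\le \kappa^\gamma\,\phi(x_0,s) + C\bigl[\omega_A(s)\,\|D^2u\|_{L^\infty(B_s(x_0))} + \omega_f(s)\bigr],
\end{equation*}
and then iterate at $s_i=\kappa^i r$ to derive the claimed inequality with the derived Dini moduli $\tilde\omega_A(\rho),\tilde\omega_f(\rho)$.

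For the one-step estimate, I split $u=v+w$ on $B_s(x_0)$, where $w\in W^{2,p}(B_s(x_0))\cap W_0^{1,p}(B_s(x_0))$ is furnished by Lemma \ref{solvability} as the unique solution of
\begin{equation*}
\tilde L_{x'_0}w=\chi_{B_{s/2}(x_0)}\bigl[(\bar a^{ij}-a^{ij})D_{ij}u+(f-\bar f)\bigr]=:F_1\chi_{B_{s/2}(x_0)},\qquad w|_{\partial B_s(x_0)}=0,
\end{equation*}
and set $v=u-w$. Since $\tilde L_{x'_0}=\bar L_{x'_0}$ on $B_{2s/3}(x_0)$ and $a^{ij}D_{ij}u=f$, a direct cancellation gives $\bar L_{x'_0}v=\bar f(x^n)$ in $B_{s/2}(x_0)$. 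For the $v$-part, Lemma \ref{lemma DDDx'} rescaled from $B_1$ to $B_{s/2}(x_0)$, applied at an exponent $q\in(0,1)$, and using that $D_{x'}^2v$ is a submatrix of $D_{xx'}v$, yields
\begin{equation*}
\|D^2D_{x'}v\|_{L^\infty(B_{s/4}(x_0))}\le Cs^{-1-n/q}\inf_{\mathbf{q}}\|D_{xx'}v-\mathbf{q}\|_{L^q(B_{s/2}(x_0))};
\end{equation*}
choosing $\mathbf{q}=D_{xx'}v(x_0)$ in the $B_{\kappa s}(x_0)$-average and using the triangle bound $\phi^{(v)}(x_0,s/2)\le C\phi(x_0,s)+(\fint_{B_{s/2}}|D_{xx'}w|^q)^{1/q}$ controls the $v$-contribution to $\phi(x_0,\kappa s)$ by $C\kappa\,\phi(x_0,s)+C\kappa[\omega_A(s)\|D^2u\|_{L^\infty(B_s)}+\omega_f(s)]$. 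For the $w$-part, Lemma \ref{weak est v} yields a weak-$L^1$ estimate for $D^2w$ on $B_{s/2}(x_0)$; via the layer-cake formula at exponent $q<1$ this converts to
\begin{equation*}
\Bigl(\fint_{B_\mu(x_0)}|D^2w|^q\Bigr)^{1/q}\le C\mu^{-n}\|F_1\|_{L^1(B_{s/2}(x_0))}\quad\text{for every }\mu\le s/2,
\end{equation*}
and estimate \eqref{est A} together with its analogue for $f$ (both incorporating the interface modulus $\omega_1$ of Lemma \ref{volume}) bounds $s^{-n}\|F_1\|_{L^1(B_{s/2})}$ by $C[\omega_A(s)\|D^2u\|_{L^\infty(B_s)}+\omega_f(s)]$. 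Taking $\mu=\kappa s$ controls the remaining $w$-contribution by $C\kappa^{-n}[\omega_A(s)\|D^2u\|_{L^\infty(B_s)}+\omega_f(s)]$. Adding both contributions and choosing $\kappa$ small enough that $C\kappa\le\kappa^\gamma$ (possible since $\gamma<1$) closes the one-step inequality, with the now-fixed $\kappa^{-n}$ absorbed into the constant.

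Finally, iterating the one-step estimate at $s_i=\kappa^i r$ gives by induction
\begin{equation*}
\phi(x_0,\kappa^k r)\le (\kappa^k)^\gamma\phi(x_0,r)+C\sum_{j=0}^{k-1}(\kappa^{k-1-j})^\gamma\bigl[\omega_A(\kappa^j r)\|D^2u\|_{L^\infty(B_r(x_0))}+\omega_f(\kappa^j r)\bigr],
\end{equation*}
and Lemma \ref{lemma omiga} identifies the sum as a Riemann sum for a Dini-type integral, matching $\tilde\omega_A(\rho)\|D^2u\|_{L^\infty(B_r)}+\tilde\omega_f(\rho)$ at $\rho=\kappa^k r$; intermediate $\rho\in(s_{k+1},s_k]$ are handled by monotonicity of $\phi$ in the radius (with an adjustment of constants). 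Using \eqref{est phi2} to bound $\phi(x_0,r)\le Cr^{-n}\|D_{xx'}u\|_{L^1(B_r(x_0))}$ completes the proof. The main obstacle is the $w$-estimate: because only a weak-type $(1,1)$ bound is available for $D^2w$ (classical $L^p$ Calder\'on--Zygmund theory fails under merely $L^1$-mean Dini coefficients), the entire argument is forced into the $L^q$ framework with $q<1$, necessitating Lemma \ref{lemma DDDx'} at a subunit exponent and the layer-cake conversion of weak-$L^1$ into $L^q$, while carefully tracking the $(\kappa s)^{-n}$ localization blow-up — the one-step inequality is only useful after $\kappa$ is so small that $\kappa^\gamma$ dominates $\kappa^{-n}$ times the Dini error. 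A secondary subtlety is the two-step coefficient approximation $A\to\hat A\to\bar A$: the first step costs the coefficient modulus $\omega_A(s)$, but the second introduces the interface-geometry modulus $\omega_1(s)$ from Lemma \ref{volume}, so the final $\tilde\omega_A$ and $\tilde\omega_f$ must aggregate both contributions.
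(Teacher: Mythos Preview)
Your proposal is correct and follows essentially the same approach as the paper: decompose $u$ into a ``good'' part solving $\bar L_{x'_0}v=\bar f$ (controlled via Lemma~\ref{lemma DDDx'}) plus a ``bad'' part with weak-$(1,1)$ Hessian bound (Lemma~\ref{weak est v}) converted to $L^q$, $q<1$, by layer-cake, then iterate the resulting one-step decay along $\kappa^j r$ and bound $\phi(x_0,r)$ by \eqref{est phi2}. Two cosmetic remarks: your labels $v,w$ are swapped relative to the paper, and the iteration sum is not handled by Lemma~\ref{lemma omiga} but is simply the \emph{definition} of $\tilde\omega_\bullet$ (see \eqref{tilde phi}); Lemma~\ref{lemma omiga} enters only later, in the proof of \eqref{est DDu''}.
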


\begin{proof}
For any $t>0$, by using Lemma \ref{weak est v} with $F=f(x)-\bar{f}(x'_{0},x^{n})+(\tilde{a}^{ij}(x)-a^{ij}(x))D_{ij}u$ and \eqref{est A}, we have
\begin{align}\label{weak type DDv}
&|\{x\in B_{r/2}(x_{0}): |D^{2}v(x)|>t\}|\leq\frac{C}{t}\int_{B_{r/2}(x_{0})}|F|\ dx\nonumber\\
&\leq\frac{C}{t}\left(\int_{B_{r/2}(x_{0})}|f(x)-\bar{f}(x'_{0},x^{n})|\ dx+\int_{B_{r/2}(x_{0})}|(\tilde{a}^{ij}(x)-a^{ij}(x))D_{ij}u|\ dx\right)\nonumber\\
&\leq\frac{C}{t}\Big(r^{n}\bar\omega_{f}(r)+r^{n}\bar\omega_{A}(r)\|D^{2}u\|_{L^{\infty}(B_{r}(x_{0}))}\Big),
\end{align}
where $\bar\omega_{\bullet}(r):=\omega_{\bullet}(r)+\omega_{1}(r)$. Therefore, for any given $q\in (0,1)$, we have
\begin{align*}
&\int_{B_{r/2}(x_{0})}|D^{2}v|^{q}\ dx=\int_{0}^{\infty}qt^{q-1}|\{x\in B_{r/2}(x_{0}): |D^{2}v(x)|>t\}|\ dt\\
&=\left(\int_{0}^{\tau}+\int_{\tau}^{\infty}\right)qt^{q-1}|\{x\in B_{r/2}(x_{0}): |D^{2}v(x)|>t\}|\ dt\\
&\leq C\tau^{q}|B_{r}(x_{0})|+\frac{Cq}{1-q}\tau^{q-1}\Big(r^{n}\bar\omega_{f}(r)+r^{n}\bar\omega_{A}(r)\|D^{2}u\|_{L^{\infty}(B_{r}(x_{0}))}\Big).
\end{align*}
By choosing a suitable $\tau$, we have
\begin{align}\label{holder v}
\left(\fint_{B_{r/2}(x_{0})}|D^{2}v|^{q}\ dx\right)^{1/q}\leq C\Big(\bar\omega_{A}(r)\|D^{2}u\|_{L^{\infty}(B_{r}(x_{0}))}+\bar\omega_{f}(r)\Big).
\end{align}

Let $w=u-v$, which satisfies $\bar{L}_{x'_{0}}w=\bar{f}(x'_{0},x^{n})$ in $B_{r/2}(x_{0})$. By Lemma \ref{lemma DDDx'} with a suitable scaling, we see that for any $\mathbf q\in\mathbb R^{n\times(n-1)}$,
$$\|D^{2}D_{x'}w\|_{L^{\infty}(B_{r/4}(x_{0}))}^{q}
\leq Cr^{-(n+q)}\int_{B_{r/2}(x_{0})}|D_{xx'}w-\mathbf q|^{q}\ dx.$$
Hence, for any $\kappa\in (0, 1/4)$, we have
\begin{align*}
\|D_{xx'}w-(D_{xx'}w)_{B_{\kappa r}(x_{0})}\|_{L^{q}(B_{\kappa r}(x_{0}))}^{q}
&\leq C(\kappa r)^{n+q}\|D^{2}D_{x'}w\|_{L^{\infty}(B_{r/4}(x_{0}))}^{q}\\
&\leq C\kappa^{n+q}\int_{B_{r/2}(x_{0})}|D_{xx'}w-\mathbf q|^{q}\ dx.
\end{align*}
That is,
\begin{align}\label{ineq DDw}
\left(\fint_{B_{\kappa r}(x_{0})}|D_{xx'}w-(D_{xx'}w)_{B_{\kappa r}(x_{0})}|^{q}\ dx\right)^{1/q}
\leq C_{0}\kappa\left(\fint_{B_{r/2}(x_{0})}|D_{xx'}w-\mathbf{q}|^{q}\ dx\right)^{1/q},
\end{align}
where $C_{0}>0$ is a constant depending on $n,p,\delta$, and $\Lambda$. Recalling that $u=w+v$, by using \eqref{holder v} and \eqref{ineq DDw}, we obtain
\begin{align*}
&\left(\fint_{B_{\kappa r}(x_{0})}|D_{xx'}u-(D_{xx'}w)_{B_{\kappa r}(x_{0})}|^{q}\ dx\right)^{1/q}\\
&\leq C_{0}\kappa\left(\fint_{B_{r/2}(x_{0})}|D_{xx'}u-\mathbf{q}|^{q}\ dx\right)^{1/q}
+C\kappa^{-\frac{n}{q}}\Big(\bar\omega_{A}(r)\|D^{2}u\|_{L^{\infty}(B_{r}(x_{0}))}+\bar\omega_{f}(r)\Big).
\end{align*}
Since $\mathbf{q}\in\mathbb R^{n\times(n-1)}$ is arbitrary, we obtain
\begin{align*}
\phi(x_{0},\kappa r)\leq C_{0}\kappa\phi(x_{0},r)+C\kappa^{-\frac{n}{q}}\Big(\bar\omega_{A}(r)\|D^{2}u\|_{L^{\infty}(B_{r}(x_{0}))}+\bar\omega_{f}(r)\Big).
\end{align*}
For any $\gamma\in(0,1)$, fix a $\kappa\in(0, 1/4)$ sufficiently small such that $C_{0}\kappa\leq\kappa^{\gamma}$. Then
\begin{align*}
\phi(x_{0},\kappa r)\leq \kappa^{\gamma}\phi(x_{0},r)+C\Big(\bar\omega_{A}(r)\|D^{2}u\|_{L^{\infty}(B_{r}(x_{0}))}+\bar\omega_{f}(r)\Big).
\end{align*}
By iterating, for $j=1,2,\ldots$, we obtain
\begin{align}\label{phi DDu}
\phi(x_{0},\kappa^{j}r)\leq\kappa^{j\gamma}\phi(x_{0},r)+C\tilde\omega_{A}\|D^{2}u\|_{L^{\infty}(B_{r}(x_{0}))}
(\kappa^{j}r)+C\tilde\omega_{f}(\kappa^{j}r),
\end{align}
where
\begin{align}\label{tilde phi}
\tilde\omega_{\bullet}(t)=\sum_{i=1}^{\infty}\kappa^{i\gamma}\Big(\bar\omega_{\bullet}(\kappa^{-i}t)\chi_{\kappa^{-i}t\leq1}+\bar\omega_{\bullet}(1)\chi_{\kappa^{-i}t>1}\Big),
\end{align}
which is a Dini function; see \cite[Lemma 1]{d}, and satisfies \eqref{equivalence}.

Now, for any $\rho$ satisfying $0<\rho\leq r\leq 1/4$, we take $j$ to be the integer satisfying $\kappa^{j+1}<\rho/r\leq\kappa^{j}$. Then, by \eqref{phi DDu} and \eqref{equivalence}, we have
\begin{align}\label{itera phi}
\phi(x_{0},\rho)\leq C\Big(\frac{\rho}{r}\Big)^{\gamma}\phi(x_{0},r)+C\tilde{\omega}_{A}(\rho)\|D^{2}u\|_{L^{\infty}(B_{r}(x_{0}))}+C\tilde{\omega}_{f}(\rho).
\end{align}
Hence, \eqref{est phi'} follows from \eqref{est phi2} and \eqref{itera phi}.
\end{proof}

\begin{lemma}
We have
\begin{align}\label{est DDu''}
\|D^{2}u\|_{L^{\infty}(B_{1/4})}\leq C\|D^2u\|_{L^{1}(B_{3/4})}+C\left(\int_{0}^{1}\frac{\tilde\omega_{f}(t)}{t}\ dt+\|f\|_{L^{\infty}(B_{1})}\right),
\end{align}
where $C>0$ is a constant depending only on $n,p,\delta,\gamma$, and $\omega_{A}$.
\end{lemma}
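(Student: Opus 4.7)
My plan is to first derive a pointwise Campanato-type bound for the tangential second derivatives $D_{xx'}u$, next recover the normal second derivative $D_{nn}u$ from the equation, and finally run an iteration on nested balls to absorb the $L^\infty$ term that inevitably appears on the right-hand side.

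The first two steps are routine given the lemmas already established. Fix $x_0 \in B_{1/2}$ and $r \leq 1/4$ with $B_r(x_0) \subset B_{3/4}$, and let $\mathbf{q}_j$ minimize $\phi(x_0, \kappa^j r)$. A standard telescoping argument based on \eqref{phi DDu} together with Lemma \ref{lemma omiga} applied to both $\tilde\omega_A$ and $\tilde\omega_f$ shows that $\{\mathbf{q}_j\}$ is Cauchy; its limit is the Lebesgue value of $D_{xx'}u(x_0)$, which is well defined by the a priori assumption $u \in C^{1,1}(B_{3/4})$. This yields
$$|D_{xx'}u(x_0)| \leq C r^{-n}\|D^{2}u\|_{L^{1}(B_{r}(x_{0}))} + C\|D^{2}u\|_{L^{\infty}(B_{r}(x_{0}))}\int_{0}^{r}\frac{\tilde\omega_{A}(t)}{t}\,dt + C\int_{0}^{r}\frac{\tilde\omega_{f}(t)}{t}\,dt.$$
Since $a^{nn}\geq\delta$, the equation $a^{ij}D_{ij}u=f$ yields $|D_{nn}u(x_0)| \leq C\|f\|_{L^{\infty}(B_{1})} + C|D_{xx'}u(x_0)|$, so the same estimate holds with $D_{xx'}u$ replaced by $D^{2}u$ plus an extra term $C\|f\|_{L^{\infty}(B_{1})}$.

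The third step is the crux. Writing $\Phi(R):=\|D^{2}u\|_{L^{\infty}(B_{R})}$ (finite on $[1/4,3/4]$ by the a priori hypothesis) and $\varepsilon(r):=\int_{0}^{r}\tilde\omega_{A}(t)/t\,dt$, taking the supremum of the pointwise bound over $x_0 \in B_R$ produces
$$\Phi(R) \leq C r^{-n}\|D^{2}u\|_{L^{1}(B_{3/4})} + C\varepsilon(r)\,\Phi(R+r) + C\int_{0}^{1}\frac{\tilde\omega_{f}(t)}{t}\,dt + C\|f\|_{L^{\infty}(B_{1})}$$
whenever $1/4 \leq R < R+r \leq 3/4$ and $r \leq 1/4$. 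I then choose a sequence $r_k \downarrow 0$ with $\sum_{k\geq 0} r_k = 1/2$ and $r_k$ of order $(k+1)^{-2}$, and set $R_0 = 1/4$, $R_{k+1} = R_k + r_k$, so $R_k \uparrow 3/4$. Because $\varepsilon(r_k) \to 0$, for all sufficiently large $k$ the factor $C\varepsilon(r_k)$ falls below any prescribed threshold $\theta \in (0,1)$; choosing $\theta$ small enough that $k^{2n}\theta^{k}$ is summable, iteration of the above inequality along $\{R_k\}$ produces a convergent series dominated by the right-hand side of \eqref{est DDu''}, plus a residual $\Phi(R_N)\prod_{k<N}C\varepsilon(r_k)\to 0$ as $N\to\infty$ (since $\Phi(R_N)\leq\Phi(3/4)<\infty$ by the a priori assumption and the product decays geometrically). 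This yields \eqref{est DDu''}.

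The main obstacle is precisely this iteration: unlike in a Schauder or Gilbarg--Trudinger estimate where the contraction factor is a fixed $\theta<1$, here $C\varepsilon(r)$ is only qualitatively small and the Dini function $\tilde\omega_{A}$ may decay arbitrarily slowly. The remedy is a variable step-size sequence $r_k$ that balances polynomial growth of the prefactor $r_k^{-n}$ against the eventual smallness of $C\varepsilon(r_k)$; once this balance is secured, both the convergence of the main series and the vanishing of the residual tail are automatic, and the final constant absorbs the dependence on $\omega_A$ through the threshold $k_0$ at which contraction begins.
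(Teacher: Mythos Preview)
Your proposal is correct and follows the same three-step structure as the paper (pointwise Campanato bound for $D_{xx'}u$, recovery of $D_{nn}u$ from the equation, then an absorption iteration using the a priori hypothesis $u\in C^{1,1}$). The only difference is in the absorption step: the paper first fixes $r_0$ so small that $C\int_0^{r_0}\tilde\omega_A(t)/t\,dt\le 4^{-n}$, then uses the \emph{geometric} radii $r=2^{-(k+2)}$ along nested balls $B_{3/4-2^{-k}}$, multiplies the recursion by $4^{-kn}$ and telescopes; your polynomial steps $r_k\sim(k+1)^{-2}$ with an eventually contracting factor $C\varepsilon(r_k)$ achieve the same end but are slightly less clean since the contraction only kicks in past a threshold $k_0$ that must be absorbed into the constant.
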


\begin{proof}
Let $\kappa\in(0, 1/4)$ be the constant in the proof of Lemma \ref{lemma itera}, $\mathbf \{q_{x_{0},\kappa^{K} r}\}_{K=0}^{\infty}\in\mathbb R^{n\times(n-1)}$ be such that
$$\phi(x_{0},\kappa^{K} r)=\left(\fint_{B_{\kappa^{K} r}(x_{0})}|D_{xx'}u-\mathbf q_{x_{0},\kappa^{K}r}|^{q}\ dx\right)^{1/q}.$$
Since
$$|\mathbf q_{x_{0},\kappa r}-\mathbf q_{x_{0},r}|^{q}\leq|D_{xx'}u-\mathbf q_{x_{0},r}|^{q}+|D_{xx'}u-\mathbf q_{x_{0},\kappa r}|^{q},$$
by taking average over $x\in B_{\kappa r}(x_{0})$ and taking the $q$-th root, we obtain
$$|\mathbf q_{x_{0},\kappa r}-\mathbf q_{x_{0},r}|\leq C(\phi(x_{0},\kappa r)+\phi(x_{0},r)).$$
By iterating, we have
\begin{align}\label{mathbf q}
|\mathbf q_{x_{0},\kappa^{K}r}-\mathbf q_{x_{0},r}|\leq C\sum_{j=0}^{K}\phi(x_{0},\kappa^{j}r).
\end{align}
Notice that \eqref{phi DDu} implies
$$\lim_{K\rightarrow\infty}\phi(x_{0},\kappa^{K}r)=0.$$
Thus, by using the assumption that $Du\in C^{0,1}(B_{3/4})$ and the Lebesgue differentiation theorem, we obtain for {\em a.e.} $x_{0}\in B_{3/4}$,
$$\lim_{K\rightarrow\infty}\mathbf q_{x_{0},\kappa^{K}r}=D_{xx'}u(x_{0}).$$
On the other hand, \eqref{tilde phi} implies that $\tilde\omega_{A}$ and $\tilde\omega_{f}$ satisfy \eqref{equivalence}. Therefore, by taking $K\rightarrow\infty$ in \eqref{mathbf q}, using \eqref{phi DDu} and Lemma \ref{lemma omiga}, for {\em a.e.} $x_{0}\in B_{3/4}$, we have
\begin{align}\label{est DDu q}
&|D_{xx'}u(x_{0})-\mathbf q_{x_{0},r}|\leq C\sum_{j=0}^{\infty}\phi(x_{0},\kappa^{j}r)\nonumber\\
&\leq C\left(\phi(x_{0},r)+\|D^{2}u\|_{L^{\infty}(B_{r}(x_{0}))}\int_{0}^{r}\frac{\tilde\omega_{A}(t)}{t}\ dt+\int_{0}^{r}\frac{\tilde\omega_{f}(t)}{t}\ dt\right).
\end{align}
By averaging the inequality
$$|\mathbf q_{x_{0},r}|^{q}\leq|D_{xx'}u-\mathbf q_{x_{0},r}|^{q}+|D_{xx'}u|^{q}$$
over $x\in B_{r}(x_{0})$ and taking the $q$-th root, we have
$$|\mathbf q_{x_{0},r}|\leq2^{1/q-1}\phi(x_{0},r)+2^{1/q-1}\left(\fint_{B_{r}(x_{0})}|D_{xx'}u|^{q}\ dx\right)^{1/q}.$$
Therefore, combining \eqref{est DDu q} and \eqref{est phi2}, we obtain for {\em a.e.} $x_{0}\in B_{3/4}$,
\begin{align*}
|D_{xx'}u(x_{0})|&\leq Cr^{-n}\|D_{xx'}u\|_{L^{1}(B_{r}(x_{0}))}\\
&\quad+C\left(\|D^{2}u\|_{L^{\infty}(B_{r}(x_{0}))}\int_{0}^{r}\frac{\tilde\omega_{A}(t)}{t}dt+\int_{0}^{r}\frac{\tilde\omega_{f}(t)}{t}dt\right).
\end{align*}
For any $x_{1}\in B_{1/4}$ and $0<r< 1/4$, we take the supremum of the above inequality over $B_{r}(x_{1})$ to get
\begin{align*}
\|D_{xx'}u\|_{L^{\infty}(B_{r}(x_{1}))}&\leq Cr^{-n}\|D^2u\|_{L^{1}(B_{2r}(x_{1}))}\\
&\quad+C\left(\|D^{2}u\|_{L^{\infty}(B_{2r}(x_{1}))}\int_{0}^{r}\frac{\tilde\omega_{A}(t)}{t}\ dt+\int_{0}^{r}\frac{\tilde\omega_{f}(t)}{t}\ dt\right).
\end{align*}
Recalling that $a^{ij}(x)D_{ij}u(x)=f(x)$, one can see that
$$D_{nn}u=\frac{1}{a^{nn}}\left(f-\sum_{(i,j)\neq(n,n)}a^{ij}D_{ij}u\right).$$
Therefore, we have
%\begin{align*}
%\|D_{nn}u\|_{L^{\infty}(B_{r}(x_{1}))}&\leq Cr^{-n}\|D_{xx'}u\|_{L^{1}(B_{2r}(x_{1}))}
%+C\Big(\|D^{2}u\|_{L^{\infty}(B_{2r}(x_{1}))}\int_{0}^{r}\frac{\tilde\omega_{A}(t)}{t}\ dt\\
%&\quad+\int_{0}^{r}\frac{\tilde\omega_{f}(t)}{t}\ dt+\|f\|_{L^{\infty}(B_{1})}\Big).
%\end{align*}
%We thus have
\begin{align*}
\|D^{2}u\|_{L^{\infty}(B_{r}(x_{1}))}&\leq Cr^{-n}\|D^2u\|_{L^{1}(B_{2r}(x_{1}))}
+C\Big(\|D^{2}u\|_{L^{\infty}(B_{2r}(x_{1}))}\int_{0}^{r}\frac{\tilde\omega_{A}(t)}{t}\ dt\\
&\quad+\int_{0}^{r}\frac{\tilde\omega_{f}(t)}{t}\ dt+\|f\|_{L^{\infty}(B_{1})}\Big).
\end{align*}
We fix $r_{0}< 1/4$ such that for any $0<r\leq r_{0}$,
$$C\int_{0}^{r}\frac{\tilde\omega_{A}(t)}{t}dt\leq\frac{1}{4^{n}}.$$
Then, for any $x_{1}\in B_{1/4}$ and $0<r\leq r_{0}$, we get
\begin{align*}
\|D^{2}u\|_{L^{\infty}(B_{r}(x_{1}))}&\leq4^{-n}\|D^{2}u\|_{L^{\infty}(B_{2r}(x_{1}))}+Cr^{-n}\|D^2u\|_{L^{1}(B_{2r}(x_{1}))}\\
&\quad+C\left(\int_{0}^{r}\frac{\tilde\omega_{f}(t)}{t}\ dt+\|f\|_{L^{\infty}(B_{1})}\right).
\end{align*}
For $k=1,2,\ldots$, denote $r_{k}=3/4-(1/2)^{k}$. For $x_{1}\in B_{r_{k}}$ and $r=(1/2)^{k+2}$, we have $B_{2r}(x_{1})\subset B_{r_{k+1}}$. We take $k_{0}\geq1$ sufficiently large such that $(1/2)^{k_{0}+2}\leq r_{0}$. It follows that for any $k\geq k_{0}$,
\begin{align*}
\|D^{2}u\|_{L^{\infty}(B_{r_{k}})}&\leq4^{-n}\|D^{2}u\|_{L^{\infty}(B_{r_{k+1}})}+C2^{kn}\|D^2u\|_{L^{1}(B_{3/4})}+C\left(\int_{0}^{1}\frac{\tilde\omega_{f}(t)}{t}\ dt+\|f\|_{L^{\infty}(B_{1})}\right).
\end{align*}
By multiplying the above by $4^{-kn}$ and summing over $k=k_{0},k_{0}+1,\ldots$, we have
\begin{align*}
&\sum_{k=k_{0}}^{\infty}4^{-kn}\|D^{2}u\|_{L^{\infty}(B_{r_{k}})}\\
&\leq\sum_{k=k_{0}+1}^{\infty}4^{-(k+1)n}\|D^{2}u\|_{L^{\infty}(B_{r_{k+1}})}
+C\|D^2u\|_{L^{1}(B_{3/4})}+C\left(\int_{0}^{1}\frac{\tilde\omega_{f}(t)}{t}\ dt+\|f\|_{L^{\infty}(B_{1})}\right).
\end{align*}
Recalling the assumption that $u\in C^{1,1}(B_{3/4})$, the summations on both sides are convergent, and we finally obtain \eqref{est DDu''}. The lemma is proved.
\end{proof}

Now we are ready to prove Theorem \ref{thm}.

{\bf Proof of Theorem \ref{thm}.}
By \eqref{est DDu q}, for $r\in (0,1/8)$, we have
\begin{align}\label{sup Dxx'}
&\sup_{x_{0}\in B_{1/8}}|D_{xx'}u(x_{0})-\mathbf q_{x_{0},r}|\nonumber\\
&\leq C\sup_{x_{0}\in B_{1/8}}\phi(x_{0},r)+C\|D^{2}u\|_{L^{\infty}(B_{1/4})}\int_{0}^{r}\frac{\tilde\omega_{A}(t)}{t}\ dt+C\int_{0}^{r}\frac{\tilde\omega_{f}(t)}{t}\ dt=:C\psi(r).
\end{align}
We recall that for each $x_0$, the coordinate system and thus $x'$ are chosen according to $x_0$.
By Lemma \ref{lemma itera}, for any $r\in (0,1/8)$, we obtain
\begin{align}\label{sup phi}
\sup_{x_{0}\in B_{1/8}}\phi(x_{0},r)\leq C\left(r^{\gamma}\|D^2u\|_{L^{1}(B_{1/4})}+\tilde{\omega}_{A}(r)\|D^{2}u\|_{L^{\infty}(B_{1/4})}+\tilde{\omega}_{f}(r)\right).
\end{align}
Suppose that $y\in B_{1/8}\cap \cD_{j_{1}}, j_{1}\in[1,l+1]$. Clearly, if $|x_{0}-y|\geq {1}/{32}$, then
\begin{align}\label{C2 est2}
&|D_{xx'}u(x_{0})-D_{xx'}u(y)|\leq2\|D^{2}u\|_{L^{\infty}(B_{1/4})}\nonumber\\
&\leq
C|x_{0}-y|^{\gamma}\left( \|D^2u\|_{L^{1}(B_{3/4})}+\int_{0}^{1}\frac{\tilde\omega_{f}(t)}{t}\ dt+\|f\|_{L^{\infty}(B_{1})}\right),
\end{align}
where we used \eqref{est DDu''} in the second inequality. Otherwise, if $|x_{0}-y|<{1}/{32}$, we set $r=|x_{0}-y|$ and discuss it further according to the following two cases:

{\bf Case 1.} If $$r\leq 1/16\max\{\dist(x_{0},\partial \cD_{j_{0}}),\dist(y,\partial \cD_{j_{1}})\},$$
then $j_{0}=j_{1}$. We define
\begin{align*}
\varphi(x_{0},r):=\inf_{\mathbf Q\in\mathbb R^{n\times n}}\left(\fint_{B_{r}(x_{0})}|D_{x}^{2}u-\mathbf Q|^{q}\ dx\right)^{1/q}.
\end{align*}
For any $\mathbf q=(q_{ij})\in \mathbb R^{n\times(n-1)}$, we define $\widetilde{\mathbf Q}:=(\widetilde{Q}_{ij})\in\mathbb R^{n\times n}$
by
\begin{equation}\label{def Q}
\begin{split}
\widetilde{Q}_{ij}&=q_{ij}\quad\mbox{for}~i=1,\dots,n,~j=1,\dots,n-1;\quad \widetilde{Q}_{in}=q_{ni}\quad\mbox{for}~i=1,\dots,n-1;\\
\widetilde{Q}_{nn}&=\frac{1}{\hat{a}^{nn}}\Big(\hat{f}-
\sum_{(i,j)\neq(n,n)}\hat{a}^{ij}\widetilde{Q}_{ij}\Big),
\end{split}
\end{equation}
where $\hat{a}^{ij}$ and $\hat{f}$ are constant functions corresponding to $a^{ij}$ and $f$, respectively.
Combining
$$D_{nn}u(x)=\frac{1}{a^{nn}(x)}\Big(f-\sum_{(i,j)\neq(n,n)}a^{ij}D_{ij}u\Big),$$
\eqref{def Q} and \eqref{est phi'}, we reach the following estimate: for any $\gamma\in (0,1)$ and $0<\rho\leq r<1/8$, we have
\begin{align*}
\varphi(x_{0},\rho)&\leq C\big(\phi(x_{0},\rho)+\omega_{f}(\rho)+\omega_{A}(\rho)(\|f\|_{L^{\infty}(B_{\rho}(x_{0}))}+\|D_{xx'}u\|_{L^{\infty}(B_{\rho}(x_{0}))})\big).
\end{align*}
By using the same argument that led to \eqref{sup Dxx'}, we obtain
\begin{align}\label{sup Dx 2}
&\sup_{x_{0}\in B_{1/8}}|D_{x}^{2}u(x_{0})-\mathbf Q_{x_{0},r}|\nonumber\\
&\leq C\sup_{x_{0}\in B_{1/8}}\phi(x_{0},r)+C\big(\|D^{2}u\|_{L^{\infty}(B_{1/4})}+\|f\|_{L^{\infty}(B_{1/4})}\big)\int_{0}^{r}\frac{\tilde\omega_{A}(t)}{t}\ dt+C\int_{0}^{r}\frac{\tilde\omega_{f}(t)}{t}\ dt,
\end{align}
where $\mathbf Q_{x_{0},r}\in\mathbb R^{n\times n}$ satisfying
$$\varphi(x_{0},r)=\left(\fint_{B_{r}(x_{0})}|D_{x}^{2}u-\mathbf Q_{x_{0},r}|^{q}\ dx\right)^{1/q}.$$
Then by the triangle inequality, we have
\begin{align}\label{case1}
&|D_{x}^{2}u(x_{0})-D_{x}^{2}u(y)|^{q}\nonumber\\
%&\leq|D_{x}^{2}u(x_{0})-\mathbf Q_{x_{0},r}|^{q}+|\mathbf Q_{x_{0},r}-X^{\top}\mathbf Q_{y,r}X|^{q}+|D_{x}^{2}u(y)-\mathbf X^{\top}Q_{y,r}X|^{q}\nonumber\\
&\leq |D_{x}^{2}u(x_{0})-\mathbf Q_{x_{0},r}|^{q}+|D_{x}^{2}u(z)-\mathbf Q_{x_{0},r}|^{q}+|D_{x}^{2}u(z)-X^{\top}\mathbf Q_{y,r}X|^{q}\nonumber\\
&\quad+|D_{x}^{2}u(y)-X^{\top}\mathbf Q_{y,r}X|^{q},
\end{align}
where $X=(X_{ij})$ is an $n\times n$ matrix, and $X_{ij}=\frac{\partial y^{i}}{\partial x^{j}}$ for $i,j=1,\dots,n$. We use $D_{y}$ to denote derivatives in the coordinate system associated with $y$, so that
$$D_{x}^{2}u(y)=X^{\top}D_{y}^{2}u(y)X.$$
Therefore, similar to \eqref{sup Dx 2}, we have
\begin{align*}
&|D_{x}^{2}u(y)-X^{\top}\mathbf Q_{y,r}X|=|X^{\top}(D_{y}^{2}u(y)-\mathbf Q_{y,r})X|\leq C|D_{y}^{2}u(y)-\mathbf Q_{y,r}|\\
&\leq C\phi(y,r)+C\big(\|D^{2}u\|_{L^{\infty}(B_{1/4})}+\|f\|_{L^{\infty}(B_{1/4})}\big)\int_{0}^{r}\frac{\tilde\omega_{A}(t)}{t}\ dt+C\int_{0}^{r}\frac{\tilde\omega_{f}(t)}{t}\ dt.
\end{align*}
We take the average over $z\in B_{r}(x_{0})\cap B_{r}(y)$ in \eqref{case1}, and then take the $q$-th root to get
$$|D_{x}^{2}u(x_{0})-D_{x}^{2}u(y)|\leq C\left(\psi(r)+\|f\|_{L^{\infty}(B_{1/4})}\int_{0}^{r}\frac{\tilde{\omega}_{A}(t)}{t}\ dt\right).$$
By using \eqref{est DDu''}, \eqref{sup Dxx'}, and \eqref{sup phi}, we obtain
\begin{align}\label{C2 est}
&|D_{x}^{2}u(x_{0})-D_{x}^{2}u(y)|\nonumber\\
&\leq C|x_{0}-y|^{\gamma}\|D^2u\|_{L^{1}(B_{3/4})}+C\int_{0}^{|x_{0}-y|}\frac{\tilde{\omega}_{f}(t)}{t}\ dt\nonumber\\
&\quad+C\int_{0}^{|x_{0}-y|}\frac{\tilde{\omega}_{A}(t)}{t}\ dt\left(\|D^2u\|_{L^{1}(B_{3/4})}+\int_{0}^{1}\frac{\tilde{\omega}_{f}(t)}{t}\ dt+\|f\|_{L^{\infty}(B_{1})}\right),
\end{align}
where $\gamma\in(0,1)$ is arbitrary.

{\bf Case 2.} If $r>1/16\max\{\dist(x_{0},\partial \cD_{j_{0}}),\dist(y,\partial \cD_{j_{1}})\}$, then by the triangle inequality, we have
\begin{align}\label{case2}
&|D_{xx'}u(x_{0})-D_{xx'}u(y)|^{q}\nonumber\\
&\leq|D_{xx'}u(x_{0})-\mathbf q_{x_{0},r}|^{q}+|\mathbf q_{x_{0},r}-\mathbf q_{y,r}|^{q}+|D_{yy'}u(y)-\mathbf q_{y,r}|^{q}+|D_{yy'}u(y)-D_{xx'}u(y)|^{q}\nonumber\\
&\leq C\psi^{q}(r)+|D_{xx'}u(z)-\mathbf q_{x_{0},r}|^{q}+|D_{yy'}u(z)-\mathbf q_{y,r}|^{q}+|D_{yy'}u(z)-D_{xx'}u(z)|^{q}\nonumber\\
&\quad+|D_{yy'}u(y)-D_{xx'}u(y)|^{q},\quad\forall~z\in B_{r}(x_{0})\cap B_{r}(y).
\end{align}
In order to estimate the last two terms in \eqref{case2}, we first notice that
\begin{align}\label{coordinate}
D_{yy'}u(y)-D_{xx'}u(y)&=X^{-\top}D_{x}^{2}u(y)X^{-1}I_{1}-D_{x}^{2}u(y)I_{1}\nonumber\\
&=\big(X^{-\top}-I\big)D_{x}^{2}u(y)I_{1}+X^{-\top}D_{x}^{2}u(y)\big(X^{-1}-I\big)I_{1},
\end{align}
where $I$ is the $n\times n$ identity matrix and $I_{1}=(\delta_{ij})$ is an $n\times (n-1)$ matrix.
%, and
%$$I_{ij}=\delta_{ij}\quad\mbox{for}~i,j=1,\dots,n-1;\quad I_{nj}=0\quad\mbox{for}~j=1,\dots,n-1.$$
On the other hand, we suppose that the closest point on $\partial \cD_{j_{1}}$ to $y$ is $(y',h_{j_{1}}(y'))$, and let $$
\nu_{2}=\frac{\big(-\nabla_{x'}h_{j_{1}}(y'),1\big)^{\top}}{\sqrt{1+|\nabla_{x'}h_{j_{1}}(y')|^{2}}}
$$
be the unit normal vector at $(y',h_{j_{1}}(y'))$ on the surface $\{(y',t): t=h_{j_{1}}(y')\}$. The corresponding tangential vectors are
\begin{align*}
\tau_{2,1}=(1,0,\ldots,0,D_{x^{1}}h_{j_{1}}(y'))^{\top},\ \dots,\
\tau_{2,n-1}=(0,0,\ldots,1,D_{x^{n-1}}h_{j_{1}}(y'))^{\top}.
\end{align*}
We define the projection operator by
$$\mbox{proj}_{a}b=\frac{\langle a,b\rangle}{\langle a,a\rangle}a,$$
where $\langle a,b\rangle$ denotes the inner product of the vectors $a$ and $b$.
%, and $\langle a,a\rangle=\|a\|^{2}$.
Then apply the Gram-Schmidt process as follows:
\begin{align*}
\hat{\tau}_{2,1}&=\tau_{2,1},\quad\tilde{\tau}_{2,1}=\frac{\hat{\tau}_{2,1}}{|\hat{\tau}_{2,1}|},\\
\hat{\tau}_{2,2}&=\tau_{2,2}-\mbox{proj}_{\hat{\tau}_{2,1}}\tau_{2,2},\quad\tilde{\tau}_{2,2}
=\frac{\hat{\tau}_{2,2}}{|\hat{\tau}_{2,2}|},\\
&\vdots\\
\hat{\tau}_{2,n-1}&=\tau_{2,n-1}-\sum_{j=1}^{n-2}\mbox{proj}_{\hat{\tau}_{2,j}}\tau_{2,n-1},
\quad\tilde{\tau}_{2,n-1}=\frac{\hat{\tau}_{2,n-1}}{|\hat{\tau}_{2,n-1}|}.
\end{align*}
Similarly, we denote $\nu_{1}=
(0',1)^{\top}$ to be the unit normal vector at $(x'_{0},h_{j_{0}}(x'_{0}))$, and the corresponding tangential vectors are
\begin{align*}
\tau_{1,1}=(1,0,\ldots,0,0)^{\top},\ldots,
\tau_{1,n-1}=(0,0,\ldots,1,0)^{\top}.
\end{align*}
It follows from the proof of Lemma \ref{volume} that the upper bound of $|\nabla_{x'}h_{j}(y')|$ is $C\omega_{1}(r)$, $j=1,\dots,M$. Then we have
\begin{align*}
|\nu_{1}-\nu_{2}|=\left|(0',1)^{\top}-\frac{\big(-\nabla_{x'}h_{j_{1}}(y'),1\big)^{\top}}{\sqrt{1+|\nabla_{x'}h_{j_{1}}(y')|^{2}}}\right|
&\leq C\omega_{1}(|x_{0}-y|),
\end{align*}
which is also true for $|\tau_{1,i}-\tilde{\tau}_{2,i}|, i=1,\ldots,n-1$. Thus, coming back to \eqref{coordinate}, we obtain
\begin{align}\label{diff DDu}
|D_{xx'}u(y)-D_{yy'}u(y)|\leq C\|D^{2}u\|_{L^{\infty}(B_{1/4})}\omega_{1}(|x_{0}-y|).
\end{align}
The penultimate term of \eqref{case2} is also bounded by the right-hand side of \eqref{diff DDu}. Coming back to \eqref{case2}, we take the average over $z\in B_{r}(x_{0})\cap B_{r}(y)$ and take the $q$-th root to get
\begin{align*}
&|D_{xx'}u(x_{0})-D_{xx'}u(y)|\nonumber\\
&\leq C\Big(\psi(r)+\phi(x_{0},r)+\phi(y,r)+\|D^{2}u\|_{L^{\infty}(B_{1/4})}\omega_{1}(|x_{0}-y|)\Big)\nonumber\\
&\leq C\Big(\psi(r)+\|D^{2}u\|_{L^{\infty}(B_{1/4})}\omega_{1}(|x_{0}-y|)\Big).
\end{align*}
It follows from \eqref{est DDu''}, \eqref{sup Dxx'}, and \eqref{sup phi} that
\begin{align}\label{C2 est1}
&|D_{xx'}u(x_{0})-D_{xx'}u(y)|\nonumber\\
&\leq C|x_{0}-y|^{\gamma}\|D^2u\|_{L^{1}(B_{3/4})}+C\int_{0}^{|x_{0}-y|}\frac{\tilde{\omega}_{f}(t)}{t}\ dt\nonumber\\
&\quad
+C\int_{0}^{|x_{0}-y|}\frac{\tilde{\omega}_{A}(t)}{t}\ dt\cdot\left(\|D^2 u\|_{L^{1}(B_{3/4})}+\int_{0}^{1}\frac{\tilde{\omega}_{f}(t)}{t}\ dt+\|f\|_{L^{\infty}(B_{1})}\right).
\end{align}
Thus, we finish the proof of Theorem \ref{thm} without lower-order terms under the assumption that $u\in C^{1,1}(B_{3/4})$.

Now we remove the assumption that $u\in C^{1,1}(B_{3/4})$. For this, it follows from the interior regularity in \cite{dek} for the non-divergence form elliptic equations that we only need to show that for any $x_{0}\in\partial\cD_{m}, m=1,\ldots,M-1$, there is a neighborhood of $x_0$ in which $Du$ is Lipschitz. In the case when $\partial\cD_{m}$ is smooth, say $C^{2,\alpha}$ with $\alpha\in(0,1)$, we can use the technique of locally flattening the boundaries, and an approximation argument, which is similar to that in the proof of \cite[Theorem 1.1]{dx}. To be specific, from the assumption that $x_0$ belongs to the boundaries of at most two of the subdomains, we can find a small $r_0>0$ and a $C^{2,\alpha}$ diffeomorphism of flattening the boundary $\partial\cD_{m}\cap B_{r_0}(x_{0})$:
$y=\Phi(x)=(\Phi^{1}(x),\dots,\Phi^{n}(x))$,
which satisfies $\Phi(x_0)=0$, $\det D\Phi=1$, and
$$
\Phi(\partial\cD_{m}\cap B_{r_0}(x_{0}))=\Phi(B_{r_0}(x_{0}))\cap \{y^n=0\}.
$$
Let $\hat{u}(y):=u(x)$, which satisfies
$$\hat{a}^{ij}D_{ij}\hat{u}=\hat{h},$$
where $\hat{a}^{ij}(y)=D_{k}\Phi^{i}D_{l}\Phi^{j}a^{kl}(x)$, $\hat h(y)=\hat f(y)-a^{kl}(x)D_{kl}\Phi^{i}D_{i}\hat{u}$, and $\hat{f}(y)=f(x)$, which are also of piecewise Dini mean oscillation in $\Phi(B_{r_0}(x_0))$. Then, it suffices to show that $D\hat u$ is Lipschitz near $0$. We take the standard mollification of the coefficients $\hat{a}^{ij}$ and data $\hat{h}$ in the $y'$ direction with a parameter $\varepsilon>0$. Then we get a uniform Lipschitz estimate independent of $\varepsilon$ by using  \cite[Theorem 3]{d} and the a priori $W^{2,\infty}$ estimate in Lemma \ref{est DDu''}. Finally, we take the limit as $\varepsilon \searrow 0$ by following the proof of \cite[Theorem 3]{d}.

For $\cD_{m}$ with $C^{1,\text{Dini}}$ boundary, we shall approximate $\cD_{m}$ by a sequence of increasing smooth domains $\{\cD_{m}^{k}\}_{k=1}^{\infty}$, which can be constructed via the regularized distance function $\rho(x)$ such that $\rho(x)\sim \mbox{dist}(x,\partial\cD_{m})$ for any $x\in \cD_{m}$ close to $\partial\cD_{m}$, is of class $C_{loc}^{\infty}(\cD_{m})$,
%{\color{blue} $|\nabla \rho|$} is strictly larger than zero near $\partial\cD_{m}\cap B_{r_{0}}(x_0)$
and has uniform $C^{1,\text{Dini}}$-characteristics.
For the existence and properties of $\rho$, we refer the reader to \cite[Theorem 2.1]{l} and \cite[Lemma 5.1]{dlk}. We set
$$
\cD_{m}^{k}:=\big\{x\in \cD_{m}: \rho(x)>1/k\big\},\quad k=1,2,\dots,$$
which have uniform $C^{1,\text{Dini}}$-characteristics.
Note that $\partial\cD_{m}\cap B_{2r}(x_{0})$ can be represented by
$$x^{n}=\varphi(x'),$$
where $\varphi$ is a $C^{1,\text{Dini}}$ function. Similarly, for any fixed $k=1,2,\dots,$ $\partial\cD_{m}^{k}\cap B_{2r}(x_{0})$ can be represented by
$$x^{n}=\varphi_{k}(x'),$$
where $\varphi_{k},k=1,2\dots,$ are $C^{1,\text{Dini}}$ functions with uniform $C^{1,\text{Dini}}$-characteristics. Next we approximate $a^{ij}$ and $f$ by
$$
a_{k}^{ij}(x',x^{n})=a^{ij}(x',x^{n}+\varphi(x')-\varphi_{k}(x')),\quad f_{k}(x',x^{n})=f(x',x^{n}+\varphi(x')-\varphi_{k}(x')),
$$
which are of piecewise Dini mean oscillation in subdomains $B_{2r_0}(x_0)\cap \cD_{m}^{k}$ and $B_{2r_0}(x_0)\setminus \cD_{m}^{k}$ with uniform moduli of continuity, and as $k\rightarrow\infty$,
$$
a_{k}^{ij}\rightarrow a^{ij}\quad \text{\em a.e.},\quad f_{k}\rightarrow f\quad \text{in}\ L^p(B_{r_0}(x_0)).
$$
After that, we can find a sequence of solutions $u_{k}\in W^{2,p}(B_{r/2}(x_{0}))$ that converges to $u$ almost everywhere with a unform $C^{1,1}$ norm in $B_{r_0}(x_0)$, by modifying the coefficients $a_{k}^{ij}$ which is similar to the argument in Section \ref{sub auxiliary}. Finally, passing to the limit finishes the proof of Theorem \ref{thm} under the assumption that $b^{i}\equiv c\equiv0$.

For the general case, we rewrite \eqref{equations} as
$$a^{ij}D_{ij}u=f-b^{i}D_{i}u-cu=:f_{0}.$$
Then we have
\begin{align}\label{f0}
\omega_{f_{0}}(r)&\leq\omega_{f}(r)+\|Du\|_{L^{\infty}(B_{r}(x_{0}))}\omega_{b}(r)+r^{\gamma}[Du]_{\gamma;B_{r}(x_{0})}\|b\|_{L^{\infty}(B_{r}(x_{0}))}\nonumber\\
&\quad+\|u\|_{L^{\infty}(B_{r}(x_{0}))}\omega_{c}(r)+r^{\gamma}[u]_{\gamma;B_{r}(x_{0})}\|c\|_{L^{\infty}(B_{r}(x_{0}))},
\end{align}
where $\gamma\in(0,1)$ and $B_{r}(x_{0})\subset B_{3/4}$. By using Lemma \ref{nondiv lemma},
\begin{align}\label{general est}
&\|u\|_{L^{\infty}(B_{r}(x_{0}))}+\|Du\|_{L^{\infty}(B_{r}(x_{0}))}+[u]_{\gamma;B_{r}(x_{0})}+[Du]_{\gamma;B_{r}(x_{0})}\nonumber\\
&\leq C\big(\|u\|_{L^{1}(B_1)}+\|f\|_{L^{\infty}(B_1)}\big).
\end{align}
Therefore, applying \eqref{general est} to \eqref{f0}, and using \eqref{C2 est1} (or \eqref{C2 est}, \eqref{C2 est2}), we have
\begin{align*}
&|D_{xx'}u(x_{0})-D_{xx'}u(y)|\\
&\leq C\int_{0}^{|x_{0}-y|}\frac{\tilde{\omega}_{A}(t)}{t}dt\cdot
\Bigg(\|D^2 u\|_{L^{1}(B_{3/4})}+\int_{0}^{1}\frac{\tilde{\omega}_{f_{0}}(t)}{t}dt+\|f_{0}\|_{L^{\infty}(B_{3/4})}\Bigg)\\
&\quad+C|x_{0}-y|^{\gamma}\|D^2 u\|_{L^{1}(B_{3/4})}+C\int_{0}^{|x_{0}-y|}\frac{\tilde{\omega}_{f_{0}}(t)}{t}dt\\
&\leq C\int_{0}^{|x_{0}-y|}\frac{\tilde{\omega}_{A}(t)}{t}dt\cdot\Bigg(\|D^2 u\|_{L^{1}(B_{3/4})}+\int_{0}^{1}\frac{\tilde{\omega}_{f}(t)}{t}dt
+\|f\|_{L^{\infty}(B_{1})}+\|u\|_{L^{1}(B_{1})}\Bigg)\\
&\quad+C|x_{0}-y|^{\gamma}\left(\|D^2 u\|_{L^{1}(B_{3/4})}+\|f\|_{L^{\infty}(B_{1})}+\|u\|_{L^{1}(B_{1})}\right)+C\int_{0}^{|x_{0}-y|}\frac{\tilde{\omega}_{f}(t)}{t}dt.
\end{align*}
Theorem \ref{thm} is proved.

\subsection{Proof of Corollary \ref{coro}}

Similar to the proof of Theorem \ref{thm}, we take $x_{0}\in B_{3/4}\cap \cD_{j_{0}}$. Let $A^{(j)}\in C^{\alpha}(\overline{\cD}_{j})$, $1\leq j\leq l+1$, be matrix-valued functions, $b^{(j)}$ and $f^{(j)}$ be in $C^{\alpha}(\overline{\cD}_{j})$. Define the piecewise constant (matrix-valued) functions
\begin{align*}
\bar{A}(x)=
A^{(j)}(x'_{0},h_{j}(x'_{0})),\quad x\in\Omega_{j}.
\end{align*}
From $b^{(j)}$ and $f^{(j)}$, we similarly define piecewise constant functions $\bar{b}$ and $\bar{f}$. By Lemma \ref{volume}, in this case we have $\omega_{1}(r)\sim r^{{\mu}/(1+\mu)}$. Therefore, we get the following result, which is similar to \cite[Lemma 5.2]{lv}.
\begin{lemma}\label{difference a holder}
Let $A, \bar{A}, b, \bar{b}, f$, and $\bar{f}$ be defined as above. There exists a constant $C>0$, depending only on $\max_{1\leq j\leq l+1}\|A\|_{C^{\alpha}(\overline{\cD}_{j})}$, $\max_{1\leq j\leq l+1}\|b\|_{C^{\alpha}(\overline{\cD}_{j})}$, $\max_{1\leq j\leq l+1}\|f\|_{C^{\alpha}(\overline{\cD}_{j})}$, $\max_{1\leq j\leq l+1}\|h_{j}\|_{C^{1,\mu}(\overline{\cD}_{j})}$ and $n,l,\mu,\alpha,\delta,\Lambda$, such that for any $x_0\in B_{3/4}$ and $r\in (0,1]$,
\begin{align*}
\fint_{B_{r}(x_{0})}|A-\bar{A}|\ dx+\fint_{B_{r}(x_{0})}|b-\bar{b}|\ dx
+\fint_{B_{r}(x_{0})}|f-\bar{f}|\ dx\leq Cr^{\alpha}.
\end{align*}
\end{lemma}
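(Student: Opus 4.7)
The plan is to follow the pattern of \cite[Lemma 5.2]{lv}, decomposing $B_r(x_0)$ according to the strips $\Omega_j$ and separating the ``compatible'' pieces $\Omega_j\cap\cD_j$ (where $A=A^{(j)}$) from the ``mismatch'' pieces $\Omega_j\setminus\cD_j$ (where $A$ is drawn from a different subdomain). Write
\begin{align*}
\int_{B_r(x_0)}|A-\bar A|\,dx
= \sum_{j=1}^{l+1}\Big(\int_{B_r(x_0)\cap\Omega_j\cap\cD_j}+\int_{B_r(x_0)\cap\Omega_j\setminus\cD_j}\Big)|A-\bar A|\,dx,
\end{align*}
and treat each piece separately; identical splittings handle $b-\bar b$ and $f-\bar f$.

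For the mismatch pieces, $|A|,|\bar A|\le\Lambda$ and Lemma~\ref{volume} give
\begin{align*}
\sum_j|(\Omega_j\Delta\cD_j)\cap B_r(x_0)|\le Cr^n\omega_1(r).
\end{align*}
In the $C^{1,\mu}$ setting, using $\omega_0(t)\sim t^\mu$ in the defining relation $\int_0^R\omega_0'(2r+s)s\,ds=3r/2$ leads to $\omega_1(r)\le Cr^{\mu/(1+\mu)}$, so the hypothesis $\alpha\le\mu/(1+\mu)$ bounds this contribution by $Cr^{n+\alpha}$, which is $Cr^\alpha$ after normalizing by $|B_r|$.

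For the compatible pieces, I would invoke the $C^\alpha$ regularity of $A^{(j)}$ on $\overline{\cD}_j$,
\begin{align*}
|A^{(j)}(x)-A^{(j)}(x'_0,h_j(x'_0))|\le [A^{(j)}]_{\alpha;\overline{\cD}_j}\,|x-(x'_0,h_j(x'_0))|^\alpha,
\end{align*}
and reduce the problem to the geometric estimate $|x-(x'_0,h_j(x'_0))|\le Cr$ on the relevant set. The horizontal part gives $|x'-x'_0|<r$ immediately; for the vertical part, $x\in\Omega_j\cap\cD_j$ forces $x^n$ to sit between consecutive interface heights evaluated both at $x'_0$ and at $x'$, and combining this with $|x^n-x_0^n|<r$, the $C^{1,\mu}$ bounds on the $h_j$'s, and the coordinate normalization $\nabla_{x'}h_{j_0}(x'_0)=0'$, one checks that only strips whose interface height $h_j(x'_0)$ lies within $O(r)$ of $x_0^n$ can actually contribute a nonempty compatible piece inside $B_r(x_0)$; for these the desired bound follows.

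The main obstacle I anticipate is precisely this geometric step: the reference point $(x'_0,h_j(x'_0))$ anchoring $\bar A$ on $\Omega_j$ is a priori not close to $x_0$ for every $j$, so one must carefully verify that strips with far-away reference heights either fail to meet $B_r(x_0)$ or meet it only inside the mismatch set already absorbed by Lemma~\ref{volume}. Once this is in place, the identical reasoning applied with $[b^{(j)}]_{\alpha;\overline{\cD}_j}$ and $[f^{(j)}]_{\alpha;\overline{\cD}_j}$ in place of $[A^{(j)}]_{\alpha;\overline{\cD}_j}$ produces the bounds for $b-\bar b$ and $f-\bar f$, yielding the combined estimate.
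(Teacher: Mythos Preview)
Your overall strategy---splitting into compatible and mismatch pieces, handling the latter via Lemma~\ref{volume} and the observation $\omega_1(r)\lesssim r^{\mu/(1+\mu)}$---is precisely what the paper invokes (it simply cites \cite[Lemma~5.2]{lv} and records $\omega_1(r)\sim r^{\mu/(1+\mu)}$, without spelling out a proof). The mismatch estimate is correct.

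There is, however, a genuine gap in your treatment of the compatible pieces. Your assertion that ``only strips whose interface height $h_j(x'_0)$ lies within $O(r)$ of $x_0^n$ can contribute a nonempty compatible piece'' fails for the strip $\Omega_{j_0}$ that contains $x_0$ itself. Whenever $r<d:=\dist(x_0,\partial\cD_{j_0})$ one has $B_r(x_0)\subset\Omega_{j_0}\cap\cD_{j_0}$, so this compatible piece equals all of $B_r(x_0)$, yet the anchoring point $(x'_0,h_{j_0}(x'_0))$ sits at distance $d$ from $x_0$. On this ball $|A-\bar A|=|A^{(j_0)}(x)-A^{(j_0)}(x'_0,h_{j_0}(x'_0))|$ is of order $[A^{(j_0)}]_{\alpha}\,d^{\alpha}$, which is \emph{not} dominated by $Cr^{\alpha}$ as $r\to 0$. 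The coordinate normalization $\nabla_{x'}h_{j_0}(x'_0)=0'$ does not help here: it controls the tilt of the interface, not its distance to $x_0$.

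The remedy is to let the freezing constants depend on $r$, exactly as in the Dini setting of Section~\ref{subsection domain}, where $\hat A^{(j)}$ is the minimizer realizing $\omega_A(r)$ and then $\bar A:=\hat A^{(j)}$ on $\Omega_j$. In the H\"older case one takes, for each $j$ with $B_r(x_0)\cap\cD_j\neq\emptyset$, any point $y_j\in B_r(x_0)\cap\overline{\cD_j}$ and sets $\hat A^{(j)}=A^{(j)}(y_j)$; the compatible contribution is then bounded by $[A^{(j)}]_{\alpha}(2r)^{\alpha}$ directly, while the mismatch piece is controlled via \eqref{est A} by $C\omega_1(r)\le Cr^{\mu/(1+\mu)}\le Cr^{\alpha}$. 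This $r$-dependent choice is the one actually fed into the iteration in Lemma~\ref{lemma itera}, since the modulus appearing there is $\bar\omega_A=\omega_A+\omega_1$ (see the derivation of \eqref{weak type DDv}); with it, your argument goes through and yields what Corollary~\ref{coro} needs.
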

Corollary \ref{coro} directly follows from \eqref{C2 est2}, \eqref{C2 est}, and \eqref{C2 est1} by taking $\gamma\in(\alpha,1)$.

\section{Proofs of Theorem \ref{thm C0} and Corollary \ref{coro u}}\label{sec thm C0}

The proof of Theorem \ref{thm C0} is similar to that of Theorem \ref{thm}. Again, we first assume that $b^{i}\equiv c\equiv0$. The adjoint operator corresponding to $\bar{L}_{x'_{0}}$ is defined by
$$\bar{L}_{x'_{0}}^{*}u:=D_{ij}(\bar{a}^{ij}(x'_{0},x^{n})u).$$
Similarly, we define the modified operator
$$\tilde{L}_{x'_{0}}^{*}u:=D_{ij}(\tilde{a}^{ij}u).$$
Then, we have
$$
\tilde{L}_{x'_{0}}^{*}u=D_{ij}\Big(\big(\tilde{a}^{ij}(x)-a^{ij}(x)\big)u\Big)+\Div^{2} g.
$$
To prove Theorem \ref{thm C0}, we first present a lemma that is an adjoint version of Lemma \ref{weak est v}.

\begin{lemma}\label{lemma adjoint}
Let $p\in(1,\infty)$ and $v\in L^{p}(B_{r}(x_{0}))$ be a unique solution to the adjoint problem
\begin{align*}
\begin{cases}
\tilde{L}_{x'_{0}}^{*}v=\Div^{2}(G\chi_{B_{r/2}(x_{0})})&\quad\mbox{in}~B_{r}(x_{0}),\\
v=0&\quad\mbox{on}~\partial B_{r}(x_{0}),
\end{cases}
\end{align*}
where $G\in L^p(B_{r/2}(x_0))$. Then for any $t>0$, we have
$$|\{x\in B_{r/2}(x_{0}): |v(x)|>t\}|\leq\frac{C}{t}\|G\|_{L^{1}(B_{r/2}(x_{0}))},$$
where $C=C(n,p,\delta)>0$.
\end{lemma}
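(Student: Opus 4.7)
The plan is to verify the hypothesis of Lemma \ref{lemma weak II} for the operator $T\colon G\mapsto v$, which is bounded on $L^p$ by Lemma \ref{sol adjoint}. After rescaling to $x_0=0$, $r=1$ via a dilation (the proof of Lemma \ref{weak est v} can be followed verbatim for this step), it suffices to establish, for any matrix-valued $b\in L^p(B_1)$ with $\int b=0$ and $\operatorname{supp} b\subset B_\rho(\bar y)\cap B_{1/2}$ (with $\bar y\in B_{1/2}$, $\rho<1/4$), that the corresponding adjoint solution $v_1$ satisfies
\[
\int_{B_{1/2}\setminus B_{c\rho}(\bar y)}|v_1|\,dx \;\leq\; C\int_{B_\rho(\bar y)\cap B_{1/2}}|b|\,dx
\]
for some absolute constant $c>1$.

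The argument uses duality followed by a dyadic decomposition, mirroring Lemma \ref{weak est v} but with the roles of the non-divergence and (adjoint) divergence-like equations swapped. For $\varphi\in L^{p'}$ supported in an annular shell $A_R:=(B_{2R}(\bar y)\setminus B_R(\bar y))\cap B_{1/2}$ with $R\geq c\rho$, I would let $w\in W^{2,p'}_0(B_1)$ solve $\tilde L_{x'_0} w=\varphi$; by the adjoint pairing in Definition \ref{def adjoint}, $\int v_1\varphi = \int b^{ij} D_{ij}w$. In the subdomain $B_R(\bar y)\subset B_{2/3}$, where $\varphi=0$ and $\tilde a^{ij}=\bar a^{ij}(x^n)$, the function $w$ satisfies the homogeneous equation $\bar a^{ij}(x^n) D_{ij}w=0$; Lemma \ref{lemma DDDx'} applied iteratively to $D_{x'}^k w$ (each of which solves the same equation, since the coefficients are independent of $x'$) then gives that for every $(i,j)\neq (n,n)$ the entry $D_{ij}w$ is Lipschitz on $B_{R/4}(\bar y)$ with constant controlled by $CR^{-3-n/p'}\|\varphi\|_{L^{p'}}$.

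The essential new difficulty is that $D_{nn}w$ has no pointwise regularity in $x^n$, because $\bar a^{ij}(x^n)$ is merely bounded. I would circumvent this by eliminating $D_{nn}w$ with the equation itself: on $B_\rho(\bar y)$,
\[
D_{nn}w \;=\; -\frac{1}{\bar a^{nn}(x^n)}\sum_{(k,l)\neq(n,n)}\bar a^{kl}(x^n)\,D_{kl}w,
\]
which rewrites $\int_{B_\rho(\bar y)} b^{ij}D_{ij}w = \sum_{(i,j)\neq(n,n)}\int \tilde b^{ij}(x)\,D_{ij}w\,dx$ with
$\tilde b^{ij}(x):=b^{ij}(x)-b^{nn}(x)\,\bar a^{ij}(x^n)/\bar a^{nn}(x^n)$. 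Combining the Lipschitz regularity of $D_{ij}w$ for $(i,j)\neq(n,n)$ with the mean-zero property $\int b^{ij}=0$ and a careful accounting of the non-mean-zero piece of $\tilde b^{ij}$ (via a decomposition of $b^{nn}$ into an $x'$-fiber-averaged part and a residual with fiber mean zero, using the piecewise constant structure of $\bar a^{ij}(x^n)$ so that $B_\rho(\bar y)$ meets only finitely many strips of discontinuity), I expect to obtain the H\"ormander-type bound
\[
\Big|\int v_1\varphi\Big| \;\leq\; C\Big(\frac{\rho}{R}\Big)^{\!\gamma} R^{-n/p'}\|\varphi\|_{L^{p'}}\|b\|_{L^1}
\]
for some $\gamma\in(0,1)$. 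Duality then gives $\|v_1\|_{L^p(A_R)}\leq C(\rho/R)^\gamma R^{-n/p'}\|b\|_{L^1}$, H\"older's inequality gives $\|v_1\|_{L^1(A_R)}\leq C(\rho/R)^\gamma\|b\|_{L^1}$, and dyadic summation over $R=2^k c\rho$ together with Lemma \ref{lemma weak II} completes the proof.

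The hard part will be the bookkeeping around the non-mean-zero corrections $\tilde b^{ij}$. In Lemma \ref{weak est v} the dual object solves a divergence-form equation and is immediately H\"older by De Giorgi--Nash--Moser, which supplies the needed cancellation. Here the dual problem is non-divergence and $D^2w$ has only one-sided regularity (tangential to the coefficient's level sets), so the cancellation must be extracted by jointly exploiting the substitution via the homogeneous equation, the $x'$-Lipschitz regularity given by Lemma \ref{lemma DDDx'}, and the piecewise constant dependence of $\bar a^{ij}$ on $x^n$.
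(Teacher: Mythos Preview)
Your overall framework is the same as the paper's: verify the hypothesis of Lemma~\ref{lemma weak II} by duality against a non-divergence solution $w$ of $\tilde L_{x'_0}w=\varphi$ supported in an annulus, eliminate $D_{nn}w$ via the homogeneous equation on $B_\rho(\bar y)$, and use the Lipschitz bound on $D_{ij}w$ for $(i,j)\neq(n,n)$ supplied by Lemma~\ref{lemma DDDx'}. You also correctly identify the crux: after the substitution, the effective data $\tilde b^{ij}=b^{ij}-b^{nn}\,\bar a^{ij}(x^n)/\bar a^{nn}(x^n)$ need not have mean zero, because the weight $\bar a^{ij}/\bar a^{nn}$ genuinely depends on $x^n$.

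The gap is in the proposed fix. Fiber-averaging $b^{nn}=\bar b(x^n)+r(x)$ handles the residual $r$ (fiber mean zero lets you subtract a constant in $x'$), but the $\bar b$-piece still pairs $\bar b(x^n)\,\bar a^{ij}(x^n)/\bar a^{nn}(x^n)$ against fiber integrals of $D_{ij}w$. The only cancellation available from $\int b^{nn}=0$ is $\int\bar b(x^n)|\{x':(x',x^n)\in B_\rho(\bar y)\}|\,dx^n=0$, which does not match the piecewise-constant weight $\bar a^{ij}/\bar a^{nn}$ when $B_\rho(\bar y)$ straddles several strips. The leftover term is then of order $\|b\|_{L^1}\cdot\|D_{xx'}w\|_{L^\infty}\sim\|b\|_{L^1}\cdot R^{-n/p'}\|\varphi\|_{L^{p'}}$, with no factor of $\rho/R$; the dyadic sum diverges logarithmically and the H\"ormander hypothesis fails.

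The paper avoids this by moving the algebraic substitution \emph{into the data} rather than into the solution. Given a mean-zero matrix $B$, one defines
\[
b^{ij}=B^{ij}+\frac{\tilde a^{ij}}{\tilde a^{nn}}\,B^{nn}\quad\text{for }(i,j)\neq(n,n),\qquad b^{nn}=B^{nn},
\]
and lets $v_1$ solve the adjoint problem with data $\operatorname{div}^2 b$. Then, using $D_{nn}v_2=\tilde a^{nn}{}^{-1}\bigl(f-\sum_{(i,j)\neq(n,n)}\tilde a^{ij}D_{ij}v_2\bigr)$ together with $f=0$ on the support of $B$, the pairing collapses \emph{exactly} to
\[
\int v_1 f=\int_{B_\rho(\bar y)}\sum_{(i,j)\neq(n,n)}D_{ij}v_2\,B^{ij}
=\int_{B_\rho(\bar y)}\sum_{(i,j)\neq(n,n)}\bigl(D_{ij}v_2-D_{ij}v_2(\bar y)\bigr)B^{ij},
\]
with no remainder: the transformation was chosen precisely so that the $B^{nn}$-contributions cancel identically. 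Lemma~\ref{lemma DDDx'} then gives the $(\rho/R)$ decay directly. Since $B\mapsto b$ is a pointwise bounded bijection, the weak-$(1,1)$ estimate for $B\mapsto v_1$ transfers to the original operator $G\mapsto v$. Replacing your fiber decomposition by this pre-composition trick repairs the argument with no further work.
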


\begin{proof}
For simplicity, we set $x_{0}=0$ and $r=1$. By Lemma \ref{sol adjoint}, the map $T: G\mapsto v$ is a bounded linear operator on $L^{p}(B_{1/2})$. As before, we take $c=24$. For $\bar{y}\in B_{1/2}$ and $r\in (0,1/4)$, let $B=(B^{ij})_{i,j=1}^{n}\in L^{p}(B_{1})$ be a matrix-valued function supported in $B_{r}(\bar{y})\cap B_{1/2}$ with mean zero, and
\begin{align*}
b^{ij}=B^{ij}+\frac{\tilde{a}^{ij}}{\tilde{a}^{nn}}B^{nn},\quad(i,j)\neq(n,n),\quad b^{nn}=B^{nn}.
\end{align*}
By Lemma \ref{sol adjoint}, there exists an adjoint solution $v_{1}\in L^{p}(B_{1})$ of the problem
\begin{align*}
\begin{cases}
\tilde{L}_{x'_{0}}^{*}v_{1}=\Div^{2}b&\quad\mbox{in}~B_{1},\\
v_{1}=0&\quad\mbox{on}~\partial B_{1},
\end{cases}
\end{align*}
For any $R\in [cr,1]$ such that $B_{1/2}\setminus B_{R}(\bar{y})\neq\emptyset$ and $f\in C_{0}^{\infty}((B_{2R}(\bar{y})\setminus B_{R}(\bar{y}))\cap B_{1/2})$, let $v_{2}\in W_{0}^{2,p'}(B_{1})$ be a strong solution of
\begin{align*}
\begin{cases}
\tilde{L}_{x'_{0}}v_{2}=f&\quad\mbox{in}~B_{1},\\
v_{2}=0&\quad\mbox{on}~\partial B_{1}.
\end{cases}
\end{align*}
By using Definition \ref{def adjoint},
$$
D_{nn}v_{2}=\frac{1}{\tilde{a}^{nn}}\Big(f-\sum_{(i,j)\neq(n,n)}\tilde{a}^{ij}D_{ij}v_{2}\Big),
$$
the matrix $B$ is supported in $B_{r}(\bar{y})\cap B_{1/2}$ with mean zero, and $f=0$ in $B_{R/2}(\bar{y})$, we have
\begin{align}\label{iden v1 f}
&\int_{(B_{2R}(\bar{y})\setminus B_{R}(\bar{y}))\cap B_{1/2}}v_{1}f=\int_{B_{1}}D_{ij}v_{2}b^{ij}=\int_{B_{r}(\bar{y})\cap B_{1/2}}\sum_{(i,j)\neq(n,n)}D_{ij}v_{2}B^{ij}\nonumber\\
&=\int_{B_{1/2}\cap B_{r}(\bar{y})}\sum_{(i,j)\neq(n,n)}\big(D_{ij}v_{2}
-D_{ij}v_{2}(\bar y)\big)B^{ij}.
\end{align}
Recalling that in $B_{r}(\bar{y})\subset B_{R/24}(\bar{y})\subset B_{2/3}$, $\tilde{a}^{ij}(x)=\bar{a}^{ij}(x'_{0},x^{n})$, we see that $v_{2}$ satisfies
$$
\bar{a}^{ij}(x'_{0},x^{n})D_{ij}v_{2}=0\quad\mbox{in}~B_{R/24}(\bar{y}).
$$
By using Lemma \ref{lemma DDDx'} with a suitable scaling, we have
\begin{align}\label{DDDx'}
&\|D^{2}D_{x'}v_{2}\|_{L^{\infty}(B_{1/2}\cap B_{r}(\bar{y}))}\leq CR^{-1-\frac{n}{p'}}\|D^{2}v_{2}\|_{L^{p'}(B_{R/24}(\bar{y}))}\nonumber\\
&\leq CR^{-1-\frac{n}{p'}}\|D^{2}v_{2}\|_{L^{p'}(B_{1})}\leq CR^{-1-\frac{n}{p'}}\|f\|_{L^{p'}((B_{2R}(\bar{y})\setminus B_{R}(\bar{y}))\cap B_{1/2})}.
\end{align}
Coming back to \eqref{iden v1 f}, we use \eqref{DDDx'} to get
\begin{align*}
\left|\int_{(B_{2R}(\bar{y})\setminus B_{R}(\bar{y}))\cap B_{1/2}}v_{1}f\right|\leq CrR^{-1-\frac{n}{p'}}\|B\|_{L^{1}(B_{1/2}\cap B_{r}(\bar{y}))}\|f\|_{L^{p'}((B_{2R}(\bar{y})\setminus B_{R}(\bar{y}))\cap B_{1/2})}.
\end{align*}
The rest of the proof is identical to that of Lemma \ref{weak est v} and thus omitted.
\end{proof}

The following lemma is an analogy of Lemma \ref{lemma itera}. Set
$$\phi(x_{0},r):=\inf_{q_{0}\in\mathbb R}\left(\fint_{B_{r}(x_{0})}|\bar{u}-q_{0}|^{q}\ dx\right)^{1/q},$$
where $\bar{u}(x)=a^{nn}(x)u(x)-g^{nn}(x)$. We recall that the coordinate system is chosen according to each $x_0$.
\begin{lemma}\label{lemma itera000}
For any $\gamma\in (0,1)$ and $0<\rho\leq r\leq 1/4$, we have
\begin{align}\label{estimate phi''}
\phi(x_{0},\rho)\leq C\Big(\frac{\rho}{r}\Big)^{\gamma}r^{-n}\|\bar{u}\|_{L^{1}(B_{r}(x_{0}))}+C\tilde{\omega}_{A}(\rho)\big(\|\bar{u}\|_{L^{\infty}(B_{r}(x_{0}))}+\|g\|_{L^{\infty}(B_{r}(x_{0}))}\big)
+C\tilde{\omega}_{g}(\rho),
\end{align}
where $C=C(n,p,\delta,\gamma)>0$, and $\tilde\omega_{\bullet}(t)$ is a Dini function derived from $\omega_{\bullet}(t)$.
\end{lemma}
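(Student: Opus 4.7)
The proof follows the template of Lemma \ref{lemma itera}, replacing Lemma \ref{weak est v} and Lemma \ref{lemma DDDx'} by their adjoint/divergence-form counterparts Lemma \ref{lemma adjoint} and Lemma \ref{div xn}, respectively. First I would freeze the coefficients at $x'_0$ and split $u = w + v$, with $v \in L^p(B_r(x_0))$ the unique adjoint solution of
\begin{align*}
\tilde L^*_{x'_0} v = \Div^2\bigl(G\chi_{B_{r/2}(x_0)}\bigr) \ \text{ in } B_r(x_0), \qquad v=0 \ \text{ on } \partial B_r(x_0),
\end{align*}
where $G^{ij} := (\tilde a^{ij} - a^{ij})u + g^{ij} - \bar g^{ij}(x'_0, x^n)$. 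Since $\bar g^{ij}(x'_0, x^n)$ depends only on $x^n$, the remainder $w = u - v$ satisfies the frozen equation $\tilde L^*_{x'_0} w = D_{nn}\bar g^{nn}(x'_0, x^n)$ in $B_{r/2}(x_0)$.

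Second, I would apply Lemma \ref{lemma adjoint} to $v$. Bounding $\|G\|_{L^1(B_{r/2}(x_0))}$ via the piecewise Dini mean oscillation estimate \eqref{est A} (together with its analog for $g$) and the pointwise identity $u = (\bar u + g^{nn})/a^{nn}$ with $a^{nn}\geq \delta$, one obtains
\begin{align*}
\|G\|_{L^1(B_{r/2}(x_0))} \leq Cr^n\bigl[\bar\omega_A(r)\bigl(\|\bar u\|_{L^\infty(B_r(x_0))} + \|g\|_{L^\infty(B_r(x_0))}\bigr) + \bar\omega_g(r)\bigr].
\end{align*}
Cavalieri integration of the resulting weak-type estimate (exactly as in the passage from \eqref{weak type DDv} to \eqref{holder v}) then yields, for any fixed $q \in (0,1)$,
\begin{align*}
\Bigl(\fint_{B_{r/2}(x_0)}|v|^q\,dx\Bigr)^{1/q} \leq C\bar\omega_A(r)\bigl(\|\bar u\|_{L^\infty(B_r(x_0))} + \|g\|_{L^\infty(B_r(x_0))}\bigr) + C\bar\omega_g(r).
\end{align*}

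Third, I would analyze the frozen part by considering $\bar w(x) := \bar a^{nn}(x'_0, x^n)w(x) - \bar g^{nn}(x'_0, x^n)$. Setting $\hat a^{ij} := \bar a^{ij}/\bar a^{nn}$ (so $\hat a^{nn} \equiv 1$), a short computation that uses the fact that each product $\hat a^{ij}(x^n)\bar g^{nn}(x^n)$ depends only on $x^n$---whence $D_{ij}$ of it vanishes for $(i,j)\neq(n,n)$---shows that $\bar w$ solves the homogeneous adjoint equation $D_{ij}(\hat a^{ij}(x^n)\bar w)=0$ in $B_{r/2}(x_0)$. Invoking the identity \eqref{trans} with the matrix $\breve a^{ij}$ from \eqref{tilde a}, and observing that in our frozen setting the vector $\sum_j D_j \breve a^{ij}$ vanishes identically (since $\breve a^{ij}$ depends only on $x^n$, $\breve a^{nn}\equiv 1$, and $\breve a^{jn}=0$ for $j\in\{1,\ldots,n-1\}$), the equation collapses to the genuine homogeneous divergence form
\begin{align*}
D_i\bigl(\breve a^{ij}(x^n) D_j \bar w\bigr) = 0 \quad\text{in } B_{r/2}(x_0),
\end{align*}
whose symmetric part $(\breve a^{ij}+\breve a^{ji})/2 = \hat a^{ij}$ is elliptic. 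A rescaled application of Lemma \ref{div xn} then delivers, for any constant $c$,
\begin{align*}
\|D\bar w\|_{L^\infty(B_{r/4}(x_0))} \leq C r^{-1}\Bigl(\fint_{B_{r/2}(x_0)}|\bar w - c|^q\,dx\Bigr)^{1/q}.
\end{align*}

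Finally, I would combine the two ingredients via the decomposition $\bar u = \bar w + (a^{nn}-\bar a^{nn})u + \bar a^{nn}v - (g^{nn}-\bar g^{nn})$ and optimize over the additive constant to produce the one-step decay
\begin{align*}
\phi(x_0,\kappa r) \leq C_0\kappa\,\phi(x_0,r) + C\kappa^{-n/q}\bigl[\bar\omega_A(r)\bigl(\|\bar u\|_{L^\infty(B_r(x_0))}+\|g\|_{L^\infty(B_r(x_0))}\bigr) + \bar\omega_g(r)\bigr]
\end{align*}
for $\kappa\in(0,1/4)$; then fix $\kappa$ with $C_0\kappa \leq \kappa^\gamma$, iterate, and apply \eqref{tilde phi} to construct the Dini majorants $\tilde\omega_A$ and $\tilde\omega_g$, exactly as in the closing argument of Lemma \ref{lemma itera}. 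The main obstacle is Step 3: verifying that the specific combination $\bar w = \bar a^{nn}w - \bar g^{nn}$---precisely what remains continuous across interfaces by flux continuity---is what transforms the frozen adjoint equation into a genuinely homogeneous divergence-form equation to which Lemma \ref{div xn} applies directly, without any distributional artifacts from the possibly piecewise-constant leading coefficients in $x^n$.
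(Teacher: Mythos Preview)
Your overall strategy matches the paper's: freeze coefficients, split $u=w+v$, control $v$ via the adjoint weak-type Lemma \ref{lemma adjoint}, transform $\bar w = \bar a^{nn}w - \bar g^{nn}$ into a homogeneous divergence-form solution, apply Lemma \ref{div xn}, and iterate. The computation that $D_{ij}(\hat a^{ij}\bar w)=0$ and your reading of \eqref{trans} are both correct.

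The genuine gap is at Step 3. Lemma \ref{div xn} is stated for $C^{0,1}_{\text{loc}}$ solutions, and the conversion of the adjoint equation $D_{ij}(\hat a^{ij}\bar w)=0$ into divergence form $D_i(\breve a^{ij}D_j\bar w)=0$ via \eqref{trans} presupposes that $\bar w$ possesses weak first derivatives. At this stage, however, you only know $\bar w\in L^p$: $u$ is merely in $L^p$, $v$ is the $L^p$ adjoint solution from Lemma \ref{sol adjoint}, and neither $\bar a^{nn}$ nor $\bar g^{nn}$ is smooth (they are piecewise constant in $x^n$). So \eqref{trans} cannot yet be invoked. What you flag as the ``main obstacle'' is not the algebraic reduction but the a priori regularity of $\bar w$.

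The paper closes this gap with a nontrivial bootstrap that occupies most of its proof: localize with a cutoff $\eta$, take finite difference quotients of \eqref{eq Dij} in the $x'$-directions, solve an auxiliary Poisson problem to recast the zero-order term $\hat a^{ij}\delta_{h,k}(\bar w D_{ij}\eta)$ as a divergence, and apply the local $L^p$ estimate for the adjoint operator to bound $\delta_{h,k}(\bar w\eta)$ uniformly in $h$, whence $D_{x'}\bar w\in L^p_{\text{loc}}$. Iterating gives $D_{x'}^k\bar w\in L^p_{\text{loc}}$ for all $k$; then the special structure $\breve a^{in}=0$ for $i<n$ and $\breve a^{nn}=1$ expresses $D_{nn}\bar w$ in terms of tangential derivatives, and \cite[Corollary 4.4]{dk1} delivers $D_n\bar w\in L^p_{\text{loc}}$. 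Only after $\bar w\in W^{1,p}_{\text{loc}}$ is established does the paper pass to divergence form and apply Lemma \ref{div xn}. You need to insert this regularity step before your rescaled application of Lemma \ref{div xn}.
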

\begin{proof}
By Lemma \ref{lemma adjoint} with $G=(\tilde{A}(x)-A(x))u+g(x)-\bar{g}(x'_{0},x^{n})$ and the argument that led to \eqref{weak type DDv}, we have
\begin{equation*}
|\{x\in B_{r/2}(x_{0}): |v(x)|>t\}|\leq\frac{C}{t}\Big(r^{n}\bar\omega_{g}(r)+r^{n}\bar\omega_{A}(r)\|u\|_{L^{\infty}(B_{r}(x_{0}))}\Big).
\end{equation*}
Therefore, for any given $q\in (0,1)$, we have
\begin{equation}\label{estimate v}
\left(\fint_{B_{r/2}(x_{0})}|v|^{q}\ dx\right)^{1/q}\leq C\Big(\bar\omega_{g}(r)+\bar\omega_{A}(r)\|u\|_{L^{\infty}(B_{r}(x_{0}))}\Big).
\end{equation}
Let $w=u-v\in L^{p}(B_{r}(x_{0}))$, which satisfies $\bar{L}_{x'_{0}}^{*}w=\Div^{2}\bar{g}(x'_{0},x^{n})$ in $B_{r/2}(x_{0})$. Denote
$$\hat{a}^{ij}(x'_{0},x^{n}):=\frac{\bar{a}^{ij}(x'_{0},x^{n})}{\bar{a}^{nn}(x'_{0},x^{n})},\quad\bar{w}:=\bar{a}^{nn}(x'_{0},x^{n})w-\bar{g}^{nn}(x'_{0},x^{n}).$$
Then $\hat{a}^{nn}(x'_{0},x^{n})=1$, and
\begin{align*}%\label{eq tilde w}
D_{ij}(\hat{a}^{ij}(x'_{0},x^{n})\bar{w})&=D_{ij}\left(\bar{a}^{ij}(x'_{0},x^{n})w-\hat{a}^{ij}(x'_{0},x^{n})\bar{g}^{nn}(x'_{0},x^{n})\right)\\
&=D_{ij}(\bar{a}^{ij}(x'_{0},x^{n})w)-\Div^{2}\bar{g}(x'_{0},x^{n})=0\quad\mbox{in}~ B_{r/2}(x_{0}).
\end{align*}
Now we prove that $D\bar w\in L_{\text{loc}}^{p}(B_{r/2}(x_{0}))$.
We choose $\eta\in C_{0}^{\infty}(B_{r/4}(x_{0}))$ with
$$0\leq\eta\leq1,\quad\eta\equiv1~\mbox{in}~B_{r/5}(x_{0}),
\quad|D\eta|\leq {40}/{r}.$$
Then
\begin{equation}\label{eq Dij}
D_{ij}(\hat{a}^{ij}(x'_{0},x^{n})\bar{w}\eta)=2D_{i}(\hat{a}^{ij}(x'_{0},x^{n})\bar{w}D_{j}\eta)-\hat{a}^{ij}(x'_{0},x^{n})\bar{w}D_{ij}\eta.
\end{equation}
Next we show that $\bar{w}\in W^{1,p}(B_{r/5}(x_{0}))$. For $k=1,\dots,n-1$ and $0<|h|<{r}/{12}$,
we take the finite difference quotient on both sides of \eqref{eq Dij} to get
\begin{align}\label{eq Dij2}
D_{ij}(\hat{a}^{ij}(x'_{0},x^{n})\delta_{h,k}(\bar{w}\eta))=2\delta_{h,k}D_{i}(\hat{a}^{ij}(x'_{0},x^{n})\bar{w}D_{j}\eta)-\hat{a}^{ij}(x'_{0},x^{n})\delta_{h,k}(\bar{w}D_{ij}\eta)
\end{align}
for any $x\in B_{r/3}(x_0)$.
For the first term of right-hand side in \eqref{eq Dij2}, we have $\hat{a}^{ij}(x'_{0},x^{n})\bar{w}D_{j}\eta\in L^{p}(B_{r/2}(x_{0}))$. For the second term of right-hand side in \eqref{eq Dij2}, we consider
\begin{align*}
\begin{cases}
\Delta V=-\hat{a}^{ij}(x'_{0},x^{n})\delta_{h,k}(\bar{w}D_{ij}\eta)&\quad\mbox{in}~B_{r/3}(x_{0}),\\
V=0&\quad\mbox{on}~\partial B_{r/3}(x_{0}).
\end{cases}
\end{align*}
We temporarily suppose that $\hat{a}^{ij}(x'_{0},x^{n})\bar{w}D_{ij}\eta$ is smooth. Then for each $x\in B_{r/3}(x_{0})$, $k=1,\dots,n-1$ and $0<|h|<{r}/{12}$, we have
\begin{align*}
&\hat{a}^{ij}(x'_{0},x^{n})\delta_{h,k}(\bar{w}D_{ij}\eta)
=\delta_{h,k}(\hat{a}^{ij}(x'_{0},x^{n})\bar{w}D_{ij}\eta)\\
&=\int_{0}^{1}D_{k}(\hat{a}^{ij}(x'_{0},x^{n})\bar{w}(x+the_{k})D_{ij}\eta(x+the_{k}))dt\cdot e_{k}\\
&=D_{k}\left(\int_{0}^{1}\hat{a}^{ij}(x'_{0},x^{n})\bar{w}(x+the_{k})D_{ij}\eta(x+the_{k})dt\cdot e_{k}\right).
\end{align*}
By using the $W^{1,p}$ estimate for the Poisson equation, we have
$$\|V\|_{W^{1,p}(B_{r/3}(x_{0}))}\leq C\|\bar{w}\|_{L^{p}(B_{r/2}(x_{0}))}.$$
Coming back to \eqref{eq Dij2}, we use the local estimate for the adjoint operator to get
$$\|\delta_{h,k}(\bar{w}\eta)\|_{L^{p}(B_{r/4}(x_{0}))}\leq C\|\bar{w}\|_{L^{p}(B_{r/2}(x_{0}))}.$$
This estimate holds if $\hat{a}^{ij}(x'_{0},x^{n})\bar{w}D_{ij}\eta$ is smooth, and thus is valid by approximation for $\hat{a}^{ij}(x'_{0},x^{n})\bar{w}D_{ij}\eta\in L^{p}(B_{r/3}(x_{0}))$. Therefore, we let $h\rightarrow0$ to obtain
$$\|D_{x'}\bar{w}\|_{L^{p}(B_{r/5}(x_{0}))}\leq C\|\bar{w}\|_{L^{p}(B_{r/2}(x_{0}))}.$$
Similarly, we have for any $k\geq1$, $D_{x'}^{k}\bar{w}\in L^{p}(B_{r/5}(x_{0}))$. By using the fact that $\breve{a}^{in}(x'_{0},x^{n})=0$ for $i=1,\ldots,n-1$, and $\breve{a}^{nn}(x'_{0},x^{n})=1$ (cf. \eqref{tilde a}), we have
$$D_{nn}\bar{w}=-\sum_{i,j=1}^{n-1}D_{i}\big(\breve{a}^{ij}(x'_{0},x^{n})D_{j}\bar{w}\big)
-\sum_{j=1}^{n-1}D_{n}\big(\breve{a}^{nj}(x'_{0},x^{n})D_{j}\bar{w}\big)\quad\mbox{in}~ B_{r/5}(x_{0}).$$
We now apply \cite[Corollary 4.4]{dk1} to conclude that $D_{n}\bar{w}\in L^{p}(B_{r/5}(x_{0}))$ and $\bar{w}\in W^{1,p}(B_{r/5}(x_{0}))$. Therefore, by repeating the same line that led to \eqref{trans}, we have
$$D_{i}(\breve{a}^{ij}(x'_{0},x^{n})D_{j}\bar{w})=D_{ij}(\hat{a}^{ij}(x'_{0},x^{n})\bar{w})=0\quad\mbox{in}~ B_{r/5}(x_{0}).$$
For any $q_{0}\in\mathbb R$,
by Lemma \ref{div xn} with a suitable scaling, we have
$$\|D\bar{w}\|_{L^{\infty}(B_{r/6}(x_{0}))}^{q}\leq Cr^{-(n+q)}\int_{B_{r/5}(x_{0})}|\bar{w}-q_{0}|^{q}\ dx.$$
Thus, similar to \eqref{ineq DDw}, we obtain
\begin{align*}
\left(\fint_{B_{\kappa r}(x_{0})}|\bar{w}-(\bar{w})_{B_{\kappa r}(x_{0})}|^{q}\ dx\right)^{1/q}\leq C_{0}\kappa\left(\fint_{B_{r/2}(x_{0})}|\bar{w}-q_{0}|^{q}\ dx\right)^{1/q}.
\end{align*}
Then by using \eqref{estimate v}, we get
\begin{align*}
&\left(\fint_{B_{\kappa r}(x_{0})}|\bar{u}-(\bar{w})_{B_{\kappa r}(x_{0})}|^{q}\ dx\right)^{1/q}\\
&\leq2^{1/q-1}\left(\fint_{B_{\kappa r}(x_{0})}|\bar{w}-(\bar{w})_{B_{\kappa r}(x_{0})}|^{q}\ dx\right)^{1/q}+C\Bigg(\fint_{B_{\kappa r}(x_{0})}|(a^{nn}(x)-\bar{a}^{nn}(x'_{0},x^{n}))u\\
&\quad+\bar{g}^{nn}(x'_{0},x^{n})-g^{nn}(x)+\bar{a}^{nn}(x'_{0},x^{n})v|^{q}\ dx\Bigg)^{1/q}\\
&\leq C_{0}\kappa\left(\fint_{B_{r/2}(x_{0})}|\bar{u}-q_{0}|^{q}\ dx\right)^{1/q}+C\kappa^{-{n}/{q}}\Big(\bar\omega_{A}(r)\|u\|_{L^{\infty}(B_{r}(x_{0}))}+\bar\omega_{g}(r)\Big).
\end{align*}
Therefore, similar to the argument that led to \eqref{phi DDu}, we have
\begin{align*}
\phi(x_{0},\kappa^{j}r)&\leq\kappa^{j\gamma}\phi(x_{0},r)+C\|u\|_{L^{\infty}(B_{r}(x_{0}))}\tilde\omega_{A}(\kappa^{j}r)+C\tilde\omega_{g}(\kappa^{j}r)\\
&\leq\kappa^{j\gamma}\phi(x_{0},r)+C\big(\|\bar{u}\|_{L^{\infty}(B_{r}(x_{0}))}+\|g\|_{L^{\infty}(B_{r}(x_{0}))}\big)\tilde\omega_{A}(\kappa^{j}r)+C\tilde\omega_{g}(\kappa^{j}r),
\end{align*}
which implies \eqref{estimate phi''}. The lemma is proved.
\end{proof}

Finally, we give the proof of Theorem \ref{thm C0}.
\begin{proof}[\bf Proof of Theorem \ref{thm C0}.]
We use the same argument that led to proof of Theorem \ref{thm} and list the main differences. By Lemma \ref{lemma itera000}, for any $r\in (0,1/8)$, we have
\begin{align*}%\label{suprem phi}
\sup_{x_{0}\in B_{1/8}}\phi(x_{0},r)\leq C\left(r^{\gamma}\|\bar{u}\|_{L^{1}(B_{1/4})}+\tilde{\omega}_{A}(r)\big(\|\bar{u}\|_{L^{\infty}(B_{1/4})}+\|g\|_{L^{\infty}(B_{1/4})}\big)+\tilde{\omega}_{g}(r)\right).
\end{align*}
Similar to \eqref{sup Dxx'}, we have
\begin{align*}%\label{sup bar u}
&\sup_{x_{0}\in B_{1/8}}|\bar{u}(x_{0})-q_{x_{0},r}|\nonumber\\
&\leq C\sup_{x_{0}\in B_{1/8}}\phi(x_{0},r)+C\big(\|\bar{u}\|_{L^{\infty}(B_{1/4})}+\|g\|_{L^{\infty}(B_{1/4})}\big)\int_{0}^{r}\frac{\tilde\omega_{A}(t)}{t}dt+C\int_{0}^{r}\frac{\tilde\omega_{g}(t)}{t}dt,
\end{align*}
where $q_{x_{0},r}\in\mathbb R$ satisfying
$$\phi(x_{0},r)=\left(\fint_{B_{r}(x_{0})}|\bar{u}-q_{x_{0},r}|^{q}\ dx\right)^{1/q}.$$
By repeating the same line of the proof of \eqref{est DDu''}, we have
\begin{align}\label{bound bar u}
\|\bar{u}\|_{L^{\infty}(B_{1/4})}\leq C\|\bar{u}\|_{L^{1}(B_{3/4})}+C\left(\int_{0}^{1}\frac{\tilde\omega_{g}(t)}{t}dt+\|g\|_{L^{\infty}(B_{3/4})}\right).
\end{align}
Using a similar argument as in the proof of Theorem \ref{thm}, for any $y_{0}\in B_{1/8}\cap \cD_{j_{1}}, j_{1}\in[1,l+1]$, we have the following two cases:

{\bf Case 1.} If $|x_{0}-y_{0}|\leq 1/32$, we set $r=|x_{0}-y_{0}|$. Recalling the definition of $\bar{u}$, we see that $a^{nn}$ and $g^{nn}$ depend on the coordinate system. Under the coordinate system associated with $y_{0}$, we use the notation $\bar{\mathfrak{u}}$. Then, similar to \eqref{diff DDu}, we get
$$|\bar{\mathfrak{u}}(y_{0})-\bar{u}(y_{0})|\leq C\big(\|u\|_{L^{\infty}(B_{1/4})}+\|g\|_{L^{\infty}(B_{1/4})}\big)\omega_{1}(|x_{0}-y_{0}|).$$
Thus, we obtain
\begin{align}\label{C est1}
&|\bar{u}(x_{0})-\bar{u}(y_{0})|\leq C\int_{0}^{|x_{0}-y_{0}|}\frac{\tilde{\omega}_{A}(t)}{t}dt\cdot\Bigg(\|u\|_{L^{1}(B_{3/4})}+\int_{0}^{1}\frac{\tilde{\omega}_{g}(t)}{t}dt+\|g\|_{L^{\infty}(B_{1})}\Bigg)\nonumber\\
&\quad+C|x_{0}-y_{0}|^{\gamma}\|u\|_{L^{1}(B_{1/4})}+C\int_{0}^{|x_{0}-y_{0}|}\frac{\tilde{\omega}_{g}(t)}{t}dt.
\end{align}

{\bf Case 2.} If $|x_{0}-y_{0}|\geq 1/32$, then
\begin{align}\label{C est2}
|\bar{u}(x_{0})-\bar{u}(y_{0})|\leq
C|x_{0}-y_{0}|^{\gamma}\left(\|u\|_{L^{1}(B_{3/4})}+\int_{0}^{1}\frac{\tilde\omega_{g}(t)}{t}dt+\|g\|_{L^{\infty}(B_{3/4})}\right).
\end{align}
The theorem is proved when $b^{i}\equiv c\equiv0$.

For the general case, we rewrite the equation as
$$D_{ij}(a^{ij}u)=\Div^{2}g+D_{i}(b^{i}u)-cu.$$
Consider
\begin{align*}
\begin{cases}
\Delta w=D_{i}(b^{i}u)-cu&\quad\mbox{in}~B_{1},\\
w=0&\quad\mbox{on}~\partial B_{1}.
\end{cases}
\end{align*}
Then, by the $W^{1,p}$ estimate, we have
\begin{align}\label{est w2}
\|w\|_{W^{1,p}(B_{1})}\leq C\|u\|_{L^{p}(B_{1})}.
\end{align}
Hence, we get
$$D_{ij}(a^{ij}u)=\Div^{2}(g+wI).$$
Then by using the local estimate for the adjoint operator and \eqref{est w2}, we have
\begin{align*}
\|u\|_{L^{p^{*}}(B_{1/2})}\leq C\left(\|g+w\|_{L^{p^{*}}(B_{1})}+\|u\|_{L^{p}(B_{1})}\right)\leq C\big(\|g\|_{L^{\infty}(B_{1})}+\|u\|_{L^{p}(B_{1})}\big),
\end{align*}
where ${1}/{p^{*}}={1}/{p}-{1}/{n}$ if $p<n$ and $p^{*}\in(p,\infty)$ is arbitrary if $p\geq n$. By a bootstrap argument, for any $q\in (1,\infty)$ we have $w\in W_{\text{loc}}^{1,q}(B_{1})$ and
$$\|w\|_{W^{1,q}(B_{1/2})}\leq C\big(\|g\|_{L^{\infty}(B_{1})}+\|u\|_{L^{p}(B_{1})}\big).$$
By Morrey's inequality, we can take a sufficiently large $q>n$ such that $w\in C_{\text{loc}}^{\beta}(B_{1})$ with $\beta=1-{n}/{q}>\max(\gamma,\mu/ (1+\mu))$, and
$$\|w\|_{C^{\beta}(B_{1/2})}\leq C\big(\|g\|_{L^{\infty}(B_{1})}+\|u\|_{L^{p}(B_{1})}\big).$$
Denote $g':=g+w$, and we have
$$\omega_{g'}(r)\leq \omega_{g}(r)+r^{\beta}[w]_{\beta;B_{1/2}}.$$
We can replace $g$ with $g'$ in \eqref{C est1} and \eqref{C est2}, respectively, to get
\begin{align*}
&|\bar{u}(x_{0})-\bar{u}(y_{0})|\nonumber\\
&\leq C\int_{0}^{|x_{0}-y_{0}|}\frac{\tilde{\omega}_{A}(t)}{t}dt\cdot\Bigg(\|u\|_{L^{p}(B_{3/4})}+\int_{0}^{1}\frac{\tilde{\omega}_{g}(t)}{t}dt+\|g\|_{L^{\infty}(B_{1})}\Bigg)\nonumber\\
&\quad+C|x_{0}-y_{0}|^{\gamma}
\left(\|g\|_{L^{\infty}(B_{1})}+\|u\|_{L^{p}(B_{1})}\right)+C\int_{0}^{|x_{0}-y_{0}|}\frac{\tilde{\omega}_{g}(t)}{t}dt,
\end{align*}
and
\begin{align*}
|\bar{u}(x_{0})-\bar{u}(y_{0})|\leq
C|x_{0}-y_{0}|^{\gamma}\left(\int_{0}^{1}\frac{\tilde\omega_{g}(t)}{t}dt+\|g\|_{L^{\infty}(B_{1})}+\|u\|_{L^{p}(B_{1})}\right).
\end{align*}
Theorem \ref{thm C0} is proved.
\end{proof}

Corollary \ref{coro u} follows from \eqref{modulus u} by using Lemma \ref{difference a holder} and taking $\gamma\in(\alpha,1)$.

\section{Proof of Corollary \ref{thm W21}}\label{pf of thm W21}

In this section, we will use the idea in \cite{MR2465684,em} to prove that if $u\in W^{2,1}(\cD)$ verifies $Lu=f$ {\em a.e.} in $\cD$ with $f\in L^{p}(\cD)$ for some $p\in(1,\infty)$, then $u\in W_{\text{loc}}^{2,p}(\cD)$.

\begin{proof}[\bf Proof of Corollary \ref{thm W21}]
We rewrite the equation \eqref{equations} as
$$Lu-\lambda_{0}u=f-\lambda_{0}u=:f_{0},$$
where $\lambda_{0}$ is a large fixed constant and $f_{0}\in L^{p}(\cD)$. Without loss of generality, we may assume that $1<p<{n}/(n-1)$. Let $\zeta\in C_{0}^{\infty}(\cD_{\varepsilon})$ with $\zeta\equiv1$ in $\cD'\subset\subset \cD_{\varepsilon}$, and $0\leq\zeta\leq1$. Let $\varphi\in C_{0}^{\infty}(\cD_{\varepsilon})$. We shall show that
\begin{align*}
\left|\int_{\cD_{\varepsilon}}D_{ij}(u\zeta)\varphi^{ij} \ dx\right|\leq C\|\varphi\|_{L^{p'}(\cD_{\varepsilon})}\big(\|f\|_{L^{p}(\cD)}+\|u\|_{W^{2,1}(\cD_{\varepsilon})}\big),
\end{align*}
where ${1}/{p}+{1}/{p'}=1$. Let $u_{\sigma}\in C^{\infty}(\cD_{\varepsilon})$ be a sequence of functions converging to $u$ in $W_{\text{loc}}^{2,1}(\cD_{\varepsilon})$ as $\sigma\rightarrow0$. Then for any $\varphi\in C_{0}^{\infty}(\cD_{\varepsilon})$, we have
\begin{align}\label{converg u sigma}
\int_{\cD_{\varepsilon}}D_{ij}(u\zeta)\varphi^{ij} \ dx=\lim_{\sigma\rightarrow0}\int_{\cD_{\varepsilon}}D_{ij}(u_{\sigma}\zeta)\varphi^{ij} \ dx.
\end{align}
By using the same idea that led to Lemma \ref{sol adjoint}, we modify the coefficients $a^{ij}$ to get
$$\tilde{a}^{ij}(x)=\eta a^{ij}(x)+\delta(1-\eta)\delta_{ij},$$
where $\eta\in C_{0}^{\infty}(\cD)$ is a cut-off function satisfying
$$0\leq\eta\leq1,\quad \eta\equiv1~\mbox{in}~\cD_{\varepsilon},\quad|\nabla\eta|\leq C(n,\varepsilon).$$
Then by using Lemma \ref{sol adjoint}, there exists an adjoint solution $v\in L^{p'}(\cD)$ to
\begin{align*}
\begin{cases}
D_{ij}(\tilde{a}^{ij}v)-D_{i}(b^{i}v)+(c-\lambda_{0})v=D_{ij}\varphi^{ij}&\quad\mbox{in}~\cD,\\
v=0&\quad\mbox{on}~\partial \cD,
\end{cases}
\end{align*}
and
\begin{align}\label{est v}
\|v\|_{L^{p'}(\cD)}\leq C\|\varphi\|_{L^{p'}(\cD_{\varepsilon})}.
\end{align}
Therefore, for any $w\in W^{2,p}(\cD)\cap W_{0}^{1,p}(\cD)$, we have
\begin{align*}
\int_{\cD}v\big(\tilde{a}^{ij}D_{ij}w+b^{i}D_{i}w+(c-\lambda_{0})w\big)\ dx=\int_{\cD}\varphi^{ij} D_{ij}w\ dx.
\end{align*}
It is easy to see that $u_{\sigma}\zeta\in W^{2,p}(\cD)\cap W_{0}^{1,p}(\cD)$ for any $\sigma>0$. Then,
\begin{align}\label{equa u sigma}
\int_{\cD}v\big(a^{ij}D_{ij}(u_{\sigma}\zeta)+b^{i}D_{i}(u_{\sigma}\zeta)+(c-\lambda_{0})(u_{\sigma}\zeta)\big)\ dx=\int_{\cD}\varphi^{ij} D_{ij}(u_{\sigma}\zeta)\ dx.
\end{align}
It follows from Theorem \ref{thm C0} that $v\in L^{\infty}(\cD_{\varepsilon})$. Since $u_{\sigma}\rightarrow u$ in $W_{\text{loc}}^{2,1}(\cD_{\varepsilon})$ as $\sigma\rightarrow0$, we thus use \eqref{converg u sigma} and \eqref{equa u sigma} to get
\begin{align*}
&\int_{\cD_{\varepsilon}}\varphi^{ij} D_{ij}(u\zeta)\ dx=\int_{\cD_{\varepsilon}}v\big(a^{ij}D_{ij}(u\zeta)+b^{i}D_{i}(u\zeta)+(c-\lambda_{0})u\zeta\big)\ dx\\
&=\int_{\cD_{\varepsilon}}v\big(a^{ij}D_{ij}u+b^{i}D_{i}u+(c-\lambda_{0})u\big)\zeta \ dx\\
&\quad+\int_{\cD_{\varepsilon}}\big(va^{ij}D_{ij}\zeta u+uvb^{i}D_{i}\zeta\big)\ dx+2\int_{\cD_{\varepsilon}}va^{ij}D_{i}uD_{j}\zeta \ dx.
\end{align*}
By using \eqref{est v}, we obtain
\begin{align*}
&\left|\int_{\cD_{\varepsilon}}v\big(a^{ij}D_{ij}u+b^{i}D_{i}u+(c-\lambda_{0})u\big)\zeta \ dx\right|\\
&\leq C\|v\|_{L^{p'}(\cD_{\varepsilon})}\|f_{0}\|_{L^{p}(\cD_{\varepsilon})}\leq C\|\varphi\|_{L^{p'}(\cD_{\varepsilon})}\big(\|f\|_{L^{p}(\cD_{\varepsilon})}+\|u\|_{W^{1,1}(\cD_{\varepsilon})}\big),
\end{align*}
\begin{align*}
\left|\int_{\cD_{\varepsilon}}\big(va^{ij}D_{ij}\zeta u+uvb^{i}D_{i}\zeta\big)\ dx\right|\leq C\|v\|_{L^{p'}(\cD_{\varepsilon})}\|u\|_{L^{p}(\cD_{\varepsilon})}\leq C \|\varphi\|_{L^{p'}(\cD_{\varepsilon})}\|u\|_{W^{1,1}(\cD_{\varepsilon})},
\end{align*}
and
\begin{align*}
\left|\int_{\cD_{\varepsilon}}va^{ij}D_{i}uD_{j}\zeta \ dx\right|\leq C\|v\|_{L^{p'}(\cD_{\varepsilon})}\|Du\|_{L^{p}(\cD_{\varepsilon})}\leq C \|\varphi\|_{L^{p'}(\cD_{\varepsilon})}\|u\|_{W^{2,1}(\cD_{\varepsilon})}.
\end{align*}
Therefore, we get
$$\left|\int_{\cD_{\varepsilon}}\varphi^{ij} D_{ij}(u\zeta)\ dx\right|\leq C\|\varphi\|_{L^{p'}(\cD_{\varepsilon})}\big( \|u\|_{W^{2,1}(\cD_{\varepsilon})}+\|f\|_{L^{p}(\cD_{\varepsilon})}\big).$$
We thus have $u\in W^{2,p}(\cD')$, and
\begin{equation*}%\label{u W2p loc}
\|u\|_{W^{2,p}(\cD')}\leq C\big( \|u\|_{W^{2,1}(\cD_{\varepsilon})}+\|f\|_{L^{p}(\cD_{\varepsilon})}\big).
\end{equation*}
Corollary \ref{thm W21} is thus proved.
\end{proof}

\section{Weak-type $(1,1)$ estimates}\label{sec thm weak}

In this section, we consider the case when the sub-domains $\cD_{1},\dots,\cD_{M-1}$ are away from $\partial\cD$. In this case, we denote $\delta_{0}=\min_{1\leq j\leq M-1}\mbox{dist}\{\partial \cD_{j}, \partial\cD\}$. We derive global weak-type $(1,1)$ estimates with respect to an $A_{1}$ Muckenhoupt weight $w$ for solutions to the non-divergence form equation without lower-order terms and the corresponding adjoint problem. %For more details about the $A_{1}$ weight, see Definition \ref{def weight}.
Denote
$$
w(\cD):=\int_{\cD}w(x)\ dx,\quad\|f\|_{L_{w}^{p}(\cD)}:=\left(\int_{\cD}|f|^{p}w\ dx\right)^{1/p},~ p\in[1,\infty),
$$
and
$$
W_{w}^{2,p}(\cD):=\{u: u, Du, D^{2}u\in L_{w}^{p}(\cD)\}.
$$

\begin{theorem}\label{thm weak C2}
Let $p\in (1,\infty)$, $\cD$ have a $C^{1,1}$ boundary, and $w$ be an $A_{1}$ Muckenhoupt weight. Suppose that the coefficients $A=(a^{ij})_{i,j=1}^{n}$ are of piecewise Dini mean oscillation over an open set containing $\overline{\cD}$. For $f\in L_{w}^{p}(\cD)$, let $u\in W_{w}^{2,p}(\cD)$ be a strong solution to
\begin{align*}
\begin{cases}
a^{ij}D_{ij}u=f&\quad\mbox{in}~\cD,\\
u=0&\quad\mbox{on}~\partial \cD.
\end{cases}
\end{align*}
Then for any $t>0$, we have
$$w\Big(\{x\in \cD: |D^{2}u(x)|>t\}\Big)\leq\frac{C}{t}\|f\|_{L_{w}^{1}(\cD)},$$
where $C$ depends on $n,M,p,\delta,\Lambda,\delta_{0}$, $[w]_{A_1}$, the $C^{1,1}$ norm of $\partial\cD$, and the $C^{1,\text{Dini}}$ characteristics of $\partial\cD_{j}$, $j=1,\dots,M-1$. Moreover, the linear operator $T: f\mapsto D^{2}u$ can be extended to a bounded operator from $L_{w}^{1}(\cD)$ to weak-$L_{w}^{1}(\cD)$.
\end{theorem}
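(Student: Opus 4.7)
The plan is to cast the operator $T\colon f\mapsto D^{2}u$ (where $u$ is the unique strong solution of the Dirichlet problem) into the framework of Lemma \ref{lemma weak II}, enhanced to accommodate an $A_{1}$ weight. Two ingredients are needed: $L^{p}_{w}$-boundedness of $T$, supplied by the weighted $W^{2,p}$-solvability in the Appendix; and a H\"ormander-type cancellation estimate for $T$, supplied by a duality argument that feeds on Theorem \ref{thm C0}.

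\textbf{Step 1 ($L^{p}_{w}$-boundedness).} By the weighted $W^{2,p}$-solvability for non-divergence form elliptic equations in $C^{1,1}$ domains proved in the Appendix, for any $f\in L^{p}_{w}(\cD)$ the Dirichlet problem has a unique solution $u\in W^{2,p}_{w}(\cD)$ with $\|D^{2}u\|_{L^{p}_{w}(\cD)}\le C\|f\|_{L^{p}_{w}(\cD)}$. Specializing to $w\equiv 1$ gives the unweighted $L^{p}$-boundedness of $T$ on $\cD$ as well.

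\textbf{Step 2 (cancellation estimate).} Fix $\bar y\in\cD$ and $0<r<\mathrm{diam}\,\cD$, and let $b\in L^{p}(\cD)$ be supported in $\cD_{r}(\bar y)$ with $\int_{\cD}b\,dx=0$. For $R\ge cr$ (with $c>1$ large enough that $B_{cr}(\bar y)\supset B_{r}(\bar y)$) and any matrix-valued $g\in C_{0}^{\infty}((B_{2R}(\bar y)\setminus B_{R}(\bar y))\cap\cD)$, let $v\in L^{p'}(\cD)$ be the adjoint solution of $L^{*}v=\Div^{2}g$ with boundary data $v=(g\nu\cdot\nu)/(A\nu\cdot\nu)$ on $\partial\cD$ (this equals zero when $R<\delta_{0}/2$ and is a smooth Dirichlet datum otherwise, which can be reduced to zero boundary data by a standard lifting since $\partial\cD$ is $C^{1,1}$). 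By the definition of an adjoint solution,
\[
\int_{\cD}D^{2}u_{b}\!:\!g\,dx=\int_{\cD_{r}(\bar y)}b\,v\,dx=\int_{\cD_{r}(\bar y)}b\,(v-v(\bar y))\,dx,
\]
where we used $\int b=0$. Since $L^{*}v=0$ in $B_{R/2}(\bar y)\cap\cD$ and this ball can be taken away from $\partial\cD$, Theorem \ref{thm C0} (or a smooth Schauder estimate if $\bar y$ is far from every $\partial\cD_{j}$) yields, after reconstructing $v$ from $\bar v=a^{nn}v-g^{nn}$, the interior H\"older estimate
\[
[v]_{\gamma;B_{r}(\bar y)\cap\cD}\le CR^{-\gamma-n/p'}\|v\|_{L^{p'}(B_{R/12}(\bar y)\cap\cD)}\le CR^{-\gamma-n/p'}\|g\|_{L^{p'}},
\]
the last step using the $L^{p'}$-bound from Lemma \ref{sol adjoint}. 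Therefore
\[
\Bigl|\int D^{2}u_{b}\!:\!g\Bigr|\le Cr^{\gamma}R^{-\gamma-n/p'}\|b\|_{L^{1}(\cD_{r}(\bar y))}\|g\|_{L^{p'}},
\]
and by duality $\|D^{2}u_{b}\|_{L^{1}((B_{2R}(\bar y)\setminus B_{R}(\bar y))\cap\cD)}\le C(r/R)^{\gamma}\|b\|_{L^{1}(\cD_{r}(\bar y))}$. Summing over a dyadic sequence $R=cr,2cr,4cr,\dots$ that exhausts $\cD\setminus B_{cr}(\bar y)$ gives the cancellation
\[
\int_{\cD\setminus B_{cr}(\bar y)}|D^{2}u_{b}|\,dx\le C_{0}\int_{\cD_{r}(\bar y)}|b|\,dx.
\]

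\textbf{Step 3 (passage to weighted weak-type $(1,1)$).} Combining Step 1 (with $w\equiv 1$) and Step 2, Lemma \ref{lemma weak II} yields the unweighted estimate $|\{|Tf|>t\}|\le Ct^{-1}\|f\|_{L^{1}(\cD)}$. To upgrade to the $A_{1}$-weighted version, perform a Calder\'on–Zygmund decomposition of $f$ relative to Lebesgue measure at height $t$, with Whitney-type dyadic cubes $\{Q_{k}\}$, writing $f=g+\sum_{k}b_{k}$ with $\int b_{k}=0$, $\mathrm{supp}\,b_{k}\subset Q_{k}$, $|g|\le Ct$, and the $Q_{k}$ disjoint. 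The good part is handled by the $L^{p}_{w}$-bound of Step 1: $w(\{|Tg|>t/2\})\le Ct^{-p}\|g\|^{p}_{L^{p}_{w}}\le Ct^{-1}\|f\|_{L^{1}_{w}}$. For the bad part, off $E=\bigcup_{k}Q_{k}^{*}$ (fattened cubes) the cancellation of Step 2 and $w\in A_{1}$ give
\[
\int_{\cD\setminus E}|Tb_{k}|w\,dx\le C\inf_{Q_{k}^{*}}w\cdot\int_{Q_{k}}|b_{k}|\,dx\le C\int_{Q_{k}}|b_{k}|w\,dx,
\]
while $w(E)\le C\sum_{k}w(Q_{k})\le Ct^{-1}\|f\|_{L^{1}_{w}}$ by the $A_{1}$ property and the stopping-time construction. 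Summing yields $w(\{|Tf|>t\})\le Ct^{-1}\|f\|_{L^{1}_{w}(\cD)}$, which extends $T$ to a bounded operator from $L^{1}_{w}$ to weak-$L^{1}_{w}$ by density.

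The main obstacle is Step 2: away from the interfaces the adjoint equation enjoys standard Schauder regularity, but near an interface we have no classical $C^{\alpha}$ estimate for $v$ itself, only for the combination $\bar v=a^{nn}v-g^{nn}$. Theorem \ref{thm C0} is precisely what closes this gap, and the assumption $\delta_{0}>0$ (sub-domains away from $\partial\cD$) is what allows the reduction of the boundary term $(g\nu\cdot\nu)/(A\nu\cdot\nu)$ in the adjoint problem to the purely interior analysis.
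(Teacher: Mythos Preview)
Your Step 2 contains a genuine gap. You need a H\"older estimate for the adjoint solution $v$ at the center $\bar y$, and you invoke Theorem \ref{thm C0}. But that theorem gives H\"older continuity of $\bar v=a^{nn}v-g^{nn}$, not of $v$. Since $g$ vanishes on $B_{R/2}(\bar y)$, you have $\bar v=a^{nn}v$ there; recovering $v=\bar v/a^{nn}$ would be fine if $a^{nn}$ were continuous, but under the piecewise Dini mean oscillation hypothesis $a^{nn}$ generically jumps across the interfaces $\partial\cD_j$. If $\bar y$ lies on (or close to) an interface, $v$ is only piecewise H\"older and $[v]_{\gamma;B_r(\bar y)}$ need not be finite, so the duality step $\int b(v-v(\bar y))$ cannot be controlled. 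You acknowledge this obstacle in your closing paragraph but do not actually overcome it.

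The paper's fix is different from anything in your outline: it introduces the global nonnegative adjoint solution $W$ to $D_{ij}(a^{ij}W)=0$ on a larger ball, and defines the good part by a $W$-weighted average, $g=W(Q)^{-1}\int_Q fW$, so that the bad part satisfies $\int_Q bW=0$ rather than $\int_Q b=0$. The duality identity then reads $\int D^{2}u_{1}\!:\!h=\int b\,u_{2}=\int bW\,(u_{2}/W-u_{2}(x_k)/W)$, and the required H\"older estimate is for the \emph{normalized} adjoint solution $u_{2}/W$. By the classical theory of normalized adjoint solutions (Bauman, Fabes--Garofalo--Mar\'in-Malave--Salsa, Escauriaza), this quotient is H\"older continuous for \emph{bounded measurable} coefficients, which sidesteps the discontinuity of $a^{nn}$ entirely. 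Theorem \ref{thm C0} is used only for the $L^{\infty}$ bound on $W$ (estimate \eqref{estimate W}), not for the cancellation.

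A second, smaller issue: in Step 3 the inequality $\int_{\cD\setminus E}|Tb_k|\,w\le C\inf_{Q_k^*}w\int_{Q_k}|b_k|$ does not follow from the unweighted cancellation and $w\in A_1$, since $w$ can be large far from $Q_k$. The paper avoids this by running the entire duality argument in the weighted spaces from the outset (using Corollary \ref{solvability adjoint} for $u_2\in L^{p'}_{w^{-1/(p-1)}}$) and then applying Lemma \ref{general version}.
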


\begin{remark}\label{rmk weak general}
From the proof below we can see that the result in Theorem \ref{thm weak C2} still holds for equations with lower-order terms  provided that $L1\leq0$ so that the weighted $W^{2,p}$ solvability is available; see Theorem \ref{solvability weight Lp}.
\end{remark}

To state the corresponding results for the adjoint operator, we need to impose additional conditions for the coefficient $A$ and the Dini function introduced in Definition \ref{def Dini}.
\begin{assumption}\label{assump omega}
1) $A$ is of piecewise Dini mean oscillation in $\cD$, and satisfies the following condition: there exists some constant $c_{0}>0$ such that for any $r\in(0,1/2)$, $\omega_{A}(r)\leq c_{0}(\ln r)^{-2}$.

2) For some constant $c_1,c_2>0$, $\omega_0'(R_0)\geq c_{1}$ and for any $R\in(0,R_0/2)$, $\omega_0(R)\leq c_{2}(\ln R)^{-2}$.
\end{assumption}

\begin{theorem}\label{thm weak C0}
Let $p\in (1,\infty)$, $\cD$ have a $C^{2,\text{Dini}}$ boundary, and $w$ be an $A_{1}$ Muckenhoupt weight. Under Assumption \ref{assump omega} the following hold. For $f=(f^{ij})_{i,j=1}^{n}\in L_{w}^{p}(\cD)$, let $u\in L_{w}^{p}(\cD)$ be a solution to the adjoint problem
\begin{align*}
\begin{cases}
D_{ij}(a^{ij}u)=\Div^{2}f&\quad\mbox{in}~\cD,\\
u=\frac{f\nu\cdot\nu}{A\nu\cdot\nu}&\quad\mbox{on}~\partial \cD.
\end{cases}
\end{align*}
Then for any $t>0$, we have
$$w\Big(\{x\in \cD: |u(x)|>t\}\Big)\leq\frac{C}{t}\|f\|_{L_{w}^{1}(\cD)},$$
where $C$ depends on $n,M,p,\delta,\Lambda,\delta_{0}$, $[w]_{A_1}$, the $C^{2,\text{Dini}}$ characteristics of $\partial\cD$ and the $C^{1,\text{Dini}}$ characteristics of $\partial\cD_{j}$, $j=1,\dots,M-1$. Moreover, the bounded linear operator $T: f\mapsto u$ can be extended to a bounded operator from $L_{w}^{1}(\cD)$ to weak-$L_{w}^{1}(\cD)$.
\end{theorem}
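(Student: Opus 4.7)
The plan is to adapt the argument behind Theorem \ref{thm weak C2} to the adjoint setting, with duality now running in the opposite direction and Lemma \ref{lemma adjoint} playing the role of Lemma \ref{weak est v}. First I would secure the weighted $L_w^p(\cD)$-solvability of the adjoint boundary value problem: this follows by duality from the weighted $W_{w'}^{2,p'}$-solvability of the direct problem (Appendix), where $w' := w^{1-p'} \in A_{p'}$ and $1/p + 1/p' = 1$, via a Riesz representation argument mirroring the proof of Lemma \ref{sol adjoint}. This gives a bounded linear operator $T : L_w^p(\cD) \to L_w^p(\cD)$, $f \mapsto u$ (matrix-valued to scalar). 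I would then extend Lemma \ref{lemma weak II} to the $A_1$-weighted setting; the doubling of $w$ and the pointwise comparison $\fint_B w \, dx \leq C \inf_B w$ reduce this to a routine modification of the unweighted proof.

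The main step is the off-diagonal cancellation estimate: for $\bar y \in \overline{\cD}$, $r$ sufficiently small, and matrix-valued $b$ supported in $\cD_r(\bar y)$ with $\int b^{ij} \, dx = 0$,
\begin{equation*}
\int_{\cD \setminus B_{cr}(\bar y)} |Tb| \, w \, dx \leq C \int_{\cD_r(\bar y)} |b| \, w \, dx.
\end{equation*}
By duality in weighted spaces this reduces to bounding $\int (Tb) \phi$ for $\phi$ supported in $\cD \setminus B_{cr}(\bar y)$ with $|\phi| \leq w$: the adjoint identity $\int (Tb) \phi = \int b^{ij} D_{ij} v$, where $v$ solves $Lv = \phi$ with $v|_{\partial \cD} = 0$, combined with $\int b^{ij} \, dx = 0$, lets me replace $D_{ij} v$ by $D_{ij} v - c^{ij}$ for arbitrary constants $c^{ij}$, so the problem reduces to controlling the oscillation of $D^2 v$ on $B_r(\bar y)$. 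Since $\phi$ vanishes on $B_{cr}(\bar y)$, Theorem \ref{thm} applied to $v$ in a chart adapted to $\bar y$ bounds the tangential oscillation of $D_{xx'} v$ by $\tilde\omega_A(r)$ times global norms, and the PDE recovers $D_{nn} v$. A dyadic splitting of $\cD \setminus B_{cr}(\bar y)$ together with the $A_1$ doubling of $w$ then closes the estimate; the constraint $\omega_A(r) \leq c_0 (\ln r)^{-2}$ in Assumption \ref{assump omega} is exactly what makes the resulting log-type geometric series converge.

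The hardest point will be the boundary case, where $\bar y$ lies close to $\partial \cD$. I would flatten $\partial \cD$ using its $C^{2,\text{Dini}}$ parameterization and invoke a boundary analogue of Theorem \ref{thm} for $v$; Assumption \ref{assump omega}(2) then controls the extra modulus coming from the boundary defining function, matching the interior decay. A further subtlety is that the $x'$-frame in Theorem \ref{thm} depends on $\bar y$, so the Cartesian cancellation $\int b^{ij} \, dx = 0$ and the Taylor subtraction $D_{ij} v(\bar y)$ must be compared across frames; the extra geometric terms are bounded by $\omega_1(r)$ and absorbed as in Case 2 of the proof of Theorem \ref{thm}.
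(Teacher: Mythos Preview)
Your overall strategy is right, but there is a genuine gap in the interior step. You propose to take $b$ with $\int b^{ij}\,dx=0$ for every $(i,j)$ and then subtract the constants $D_{ij}v(\bar y)$, claiming that ``the PDE recovers $D_{nn}v$''. This fails precisely when $B_r(\bar y)$ crosses an interfacial boundary: from $a^{ij}D_{ij}v=0$ on $B_r(\bar y)$ one has $D_{nn}v=-(a^{nn})^{-1}\sum_{(i,j)\neq(n,n)}a^{ij}D_{ij}v$, and since the ratios $a^{ij}/a^{nn}$ jump across the interface, $D_{nn}v$ in general has oscillation of order $\|D^2v\|_{L^\infty}$ on $B_r(\bar y)$ even when $D_{xx'}v$ has small oscillation. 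Theorem \ref{thm} only controls $D_{xx'}v$; the piecewise $C^2$ conclusion is not enough here because you need a \emph{single} constant matrix to subtract over a ball that may straddle an interface.

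The paper's remedy is to abandon the standard mean-zero Calder\'on--Zygmund decomposition and use a cube-dependent one, which is why Lemma \ref{general version} is invoked rather than a weighted version of Lemma \ref{lemma weak II} as you suggest. In the interior case (the frame at $x_k$ fixed) one sets
\[
g^{ij}=\fint_{Q_\alpha^k}\Big(f^{ij}-\frac{a^{ij}}{a^{nn}}f^{nn}\Big)\,dx+\frac{a^{ij}}{a^{nn}}\fint_{Q_\alpha^k}f^{nn}\,dx\quad\text{for }(i,j)\neq(n,n),\qquad g^{nn}=\fint_{Q_\alpha^k}f^{nn}\,dx,
\]
and $b=f-g$. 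The identity $a^{ij}D_{ij}v_2=0$ on $B_{r_k}(x_k)$ then lets one rewrite $\int D_{ij}v_2\,b^{ij}=\sum_{(i,j)\neq(n,n)}\int D_{ij}v_2\,\tilde b^{ij}$ with $\tilde b^{ij}:=b^{ij}-(a^{ij}/a^{nn})b^{nn}$ having mean zero; now only the oscillation of $D_{xx'}v_2$ enters, and that is exactly what \eqref{C2 est1} supplies. This is the same device as in Lemma \ref{lemma adjoint}, which you cite at the outset but do not carry into the decomposition. Note that $g$ is not constant on $Q_\alpha^k$ (it depends on $x$ through $a^{ij}/a^{nn}$), so the flexible framework of Lemma \ref{general version} is genuinely needed. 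Your boundary sketch is essentially correct: there the cube avoids the subdomains, the coefficients are Dini rather than merely piecewise Dini, and the full-Hessian oscillation estimate from \cite{dek} applies, so the ordinary mean-zero choice $g=\fint_{Q_\alpha^k}f$ suffices.
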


\begin{remark}
The result in Theorem \ref{thm weak C0} still holds for the problem
\begin{align*}
\begin{cases}
L^{*}u=\Div^{2}f&\quad\mbox{in}~\cD,\\
u=\frac{f\nu\cdot\nu}{A\nu\cdot\nu}&\quad\mbox{on}~\partial \cD,
\end{cases}
\end{align*}
when $L1\leq0$
by using Corollary \ref{solvability adjoint} and the same argument as in the proof of Theorem \ref{thm weak C0}.
\end{remark}

Instead of Lemma \ref{lemma weak II} above, which was used in \cite{dek,dk,dlk}, in the proofs of Theorems \ref{thm weak C2} and  \ref{thm weak C0}, we apply a generalized version of it stated below since our argument and estimates depend on the coordinate system associated with a given point, as mentioned before.

\begin{lemma}\label{general version}\cite[Lemma 6.3]{dx}
Let $w$ be a doubling measure and $\cD$ be a bounded domain in $\mathbb R^{n}$ satisfying \eqref{condition D}. Let $p\in (1,\infty)$ and $T$ be a bounded linear operator on $L_{w}^{p}(\cD)$. Suppose that if for some $f\in L_{w}^p(\cD)$, $t>0$, and some cube $Q_\alpha^k$ we have
$$
t<\frac{1}{w(Q_\alpha^k)}\int_{Q_\alpha^k}|f|w\ dx\leq C_{1}t,
$$
where $C_{1}\geq1$ and $\{Q_\alpha^k\}$ is a collection of ``cubes" defined in \cite[Appendix]{dx}, then $f$ admits a decomposition $f=g+b$ in $Q_\alpha^k$, where $g$ and $b$ satisfy
\begin{equation*}%\label{prop h b}
\int_{Q_\alpha^k}|g|^{p}w\ dx\leq C_1t^{p}w(Q^k_\alpha),\quad\int_{\cD\setminus B_{cr}(x_{0})}|T(b\chi_{Q_\alpha^k})|w\ dx\leq C_1tw(Q^k_\alpha)
\end{equation*}
with $x_0\in Q_\alpha^k$ and $r=\text{diam}\,Q_\alpha^k$. Then for any $f\in L_{w}^p(\cD)$ and $t>0$, we have
\begin{equation*}%\label{eq10.58}
w(\{x\in \cD: |Tf(x)|>t\})\leq\frac{C}{t}\int_{\cD}|f|w\ dx,
\end{equation*}
where $C=C(n,c,\cD,C_1,\|T\|_{L_{w}^p\rightarrow L_{w}^p})$ is a constant. Moreover, $T$ can be extended to a bounded operator from $L_{w}^1(\cD)$ to weak-$L_{w}^1(\cD)$.
\end{lemma}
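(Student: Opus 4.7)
My overall plan is to run a Calder\'on--Zygmund type decomposition of $f$ adapted to the dyadic-like family $\{Q_\alpha^k\}$ and the doubling measure $w\,dx$, then split the distributional estimate on $|Tf|$ into a \emph{good} part controlled by the assumed $L_w^p$-boundedness of $T$ and a \emph{bad} part controlled by the hypothesized off-diagonal bound on $T$.

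First I would fix $f\in L_w^p(\cD)$ and $t>0$ and perform a stopping-time selection on weighted averages $\frac{1}{w(Q_\alpha^k)}\int_{Q_\alpha^k}|f|w\,dx$. Taking $k$ small (coarse scale) so that all averages at that scale are $\leq t$ (possible since $f\in L_w^1$ and $w(\cD)<\infty$, after replacing $t$ by a constant multiple if needed), I descend through the family and select a maximal disjoint collection $\{Q_j\}\subset\{Q_\alpha^k\}$ of cubes whose weighted average of $|f|$ first exceeds $t$. By doubling of $w$ and the controlled parent/child volume ratios of the family $\{Q_\alpha^k\}$, the selected cubes automatically satisfy the two-sided bound $t<\frac{1}{w(Q_j)}\int_{Q_j}|f|w\,dx\leq C_1t$ required by the hypothesis. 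Outside $\Omega:=\bigcup_j Q_j$, the weighted Hardy--Littlewood maximal function of $f$ is $\leq t$, so by the Lebesgue differentiation theorem for doubling measures one has $|f|\leq t$ a.e.\ on $\cD\setminus\Omega$.

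Applying the hypothesis on each selected $Q_j$ produces $f|_{Q_j}=g_j+b_j$ with $\int_{Q_j}|g_j|^pw\,dx\leq C_1t^pw(Q_j)$ and $\int_{\cD\setminus B_{cr_j}(x_{0,j})}|T(b_j\chi_{Q_j})|w\,dx\leq C_1t\,w(Q_j)$, where $r_j:=\mathrm{diam}(Q_j)$. Set
\[
g:=f\chi_{\cD\setminus\Omega}+\sum_j g_j\chi_{Q_j},\qquad b:=\sum_j b_j\chi_{Q_j},
\]
so $f=g+b$. For the good part, $|g|\leq t$ a.e.\ on $\cD\setminus\Omega$ and the pointwise $L_w^p$-bound on each $g_j$ give
\[
\|g\|_{L_w^p(\cD)}^p\leq t^{p-1}\|f\|_{L_w^1(\cD\setminus\Omega)}+C_1t^p\sum_j w(Q_j)\leq Ct^{p-1}\|f\|_{L_w^1(\cD)},
\]
where $\sum_j w(Q_j)\leq t^{-1}\|f\|_{L_w^1(\cD)}$ follows from the lower average bound. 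Chebyshev together with $\|T\|_{L_w^p\to L_w^p}<\infty$ then yields $w(\{|Tg|>t/2\})\leq Ct^{-1}\|f\|_{L_w^1(\cD)}$. For the bad part, let $Q_j^*:=\cD\cap B_{cr_j}(x_{0,j})$; doubling of $w$ gives $w(Q_j^*)\leq Cw(Q_j)$, so
\[
w\Bigl(\bigcup_j Q_j^*\Bigr)\leq C\sum_j w(Q_j)\leq Ct^{-1}\|f\|_{L_w^1(\cD)},
\]
while outside $\bigcup_j Q_j^*$ the hypothesized off-diagonal estimate summed in $j$ plus Chebyshev gives
\[
w\Bigl(\bigl\{x\in\cD\setminus\textstyle\bigcup_j Q_j^*:|Tb(x)|>t/2\bigr\}\Bigr)\leq\frac{2}{t}\sum_j C_1t\,w(Q_j)\leq Ct^{-1}\|f\|_{L_w^1(\cD)}.
\]
Combining the two contributions yields $w(\{|Tf|>t\})\leq Ct^{-1}\|f\|_{L_w^1(\cD)}$, and the extension from $L_w^p\cap L_w^1$ to $L_w^1$ follows by density and the standard quasi-Banach extension argument for weak-$L_w^1$.

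The main obstacle is the compatibility between the cube family $\{Q_\alpha^k\}$ from the appendix of \cite{dx} and the stopping-time procedure with respect to $w$: one must verify that (i) the parent/child weighted volume ratios are uniformly bounded so the upper bound $C_1t$ on selected averages is automatic, and (ii) the enlarged ball $B_{cr}(x_0)$ appearing in the off-diagonal hypothesis is $w$-comparable to $Q_\alpha^k$ itself. Both rest on the doubling of $w$ and the geometric ``sandwich'' property that each $Q_\alpha^k$ is contained in and contains comparable balls centered at a fixed interior point, which is precisely what the construction in \cite{dx} supplies.
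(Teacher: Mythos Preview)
The paper does not give its own proof of this lemma; it is stated as a direct citation of \cite[Lemma 6.3]{dx} and used as a black box in Section~\ref{sec thm weak}. Your proposal is the standard Calder\'on--Zygmund argument adapted to a doubling weight and to the Christ-type dyadic family $\{Q_\alpha^k\}$, and it is precisely the kind of proof one expects the cited reference to contain: a weighted stopping-time selection producing the two-sided average bound, splitting $f=g+b$ cube-by-cube via the hypothesis, controlling $Tg$ by Chebyshev plus $L_w^p$-boundedness, and handling $Tb$ by excising the dilated balls $B_{cr_j}(x_{0,j})$ and summing the off-diagonal bounds. The technical checkpoints you flag at the end---bounded parent/child $w$-volume ratios and $w$-comparability of $Q_\alpha^k$ with $B_{cr}(x_0)$---are exactly the ingredients supplied by the doubling property of $w$ together with the sandwich property of the cubes from \cite[Appendix]{dx}, so there is no gap. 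In short, your approach matches what the paper implicitly relies on; there is simply no in-paper proof to compare against.
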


\begin{proof}[\bf Proof of Theorem \ref{thm weak C2}.]
By Theorem \ref{solvability weight Lp}, we can see that the map $T: f\mapsto D^{2}u$ is a bounded linear operator on $L_{w}^{p}(\cD)$. It suffices to show that $T$ satisfies the hypothesis of Lemma \ref{general version}. For simplicity, we may assume that $\cD$ is contained in $B_{5}$ and $A$ has piecewise Dini mean oscillation on $B_{10}$. Let $\{Q_{\alpha}^{k}\}$ be a
collection of dyadic ``cubes'' introduced in the proof of \cite[Lemma 4.1]{dek}. Notice that the assumptions in Lemma \ref{general version} satisfies automatically for large cubes (i.e., small $k$) by taking a sufficiently large $c$. We thus can assume that each $Q_{\alpha}^{k}$ is small enough so that they do not intersect with $\cup_{j=1}^{M-1}\overline{\cD_{j}}$ and $\partial\cD$ at the same time. The following proof proceeds in the same way as in \cite[Theorem 1.10]{dek} except that in our case, we consider the decomposition of $f$ with respect to the $A_{1}$ Muckenhoupt weight $w$; that is, for some $Q_{\alpha}^{k}$ and $t>0$, suppose
\begin{equation}\label{int f weight}
t<\frac{1}{w(Q_{\alpha}^{k})}\int_{Q_{\alpha}^{k}}|f|w\ dx\leq C_{1}t,
\end{equation}
where $C_{1}\geq1$. For a fixed $x_{k}\in Q_{\alpha}^{k}$, we associate $Q_{\alpha}^{k}$ with a Euclidean ball $B_{r_k}(x_k)$ such that $Q_{\alpha}^{k}\subset B_{r_k}(x_k)$, where $r_k:=\text{diam}\, Q_{\alpha}^{k}\leq {\delta_{0}}/{2}$. Let $W$ be the nonnegative adjoint solution to
$$D_{ij}(a^{ij}W)=0\quad\mbox{in}~B_{10},\quad W=1\quad\mbox{on}~\partial B_{10}.$$
Then $W$ is in the reverse H\"{o}lder class, with constants which depend only on $n,\delta$, and $\Lambda$:
$$(W)_{B_{2r_{k}}(x_{k})}\leq C(W)_{B_{r_k}(x_k)},\quad\left(\fint_{B_{r_k}(x_k)}W^{\frac{n}{n-1}}\ dx\right)^{\frac{n-1}{n}}\leq C\fint_{B_{r_k}(x_k)}W\ dx,$$
whenever $B_{2r_{k}}(x_{k})\subset B_{10}$. Also, $(W)_{B_{10}}\approx1$; see \cite{dek,e,fgms,fs}. Then we have the following global estimate: For any $x_{k}\in\cD$ and $0<r_{k}\leq \delta_0/2$,
by using \eqref{bound bar u} when $B_{k}\cap \partial\cD=\emptyset$ and \cite[Lemma 2.26]{dek} when $B_{k}\cap \partial\cD_{j}=\emptyset$, $j=1,\dots,M-1$, we get
\begin{equation}\label{estimate W}
||W||_{L^{\infty}(\cD_{r_{k}}(x_{k}))}\leq Cr_{k}^{-n}||W||_{L^{1}(\cD_{r_{k}}(x_{k}))},
\end{equation}
where $C$ depends on $n,M,\delta,\Lambda,\delta_{0},\omega_{A}$, the $C^{1,1}$ norm of $\partial\cD$, and %the $C^{1,\mu}$ norms of
the $C^{1,\text{Dini}}$ characteristics of $\partial\cD_{j}$, $j=1,\dots,M-1$. We now decompose $f=g+b$ in a given $Q_{\alpha}^{k}$ such that
$$
g:=\frac{1}{W(Q_{\alpha}^{k})}\int_{Q_{\alpha}^{k}}fW,
\quad b=f-g.$$
Then
\begin{equation}\label{prop bW}
\int_{Q_{\alpha}^{k}}bW\ dx=0,
\end{equation}
and by using \eqref{estimate W}, the definition of $A_1$ weights, and \eqref{int f weight}, we have
\begin{align*}
|g|\leq \frac{||W||_{L^{\infty}(Q_{\alpha}^{k})}}{W(Q_{\alpha}^{k})}\int_{Q_{\alpha}^{k}}|f|\ dx\leq \frac{C}{|Q_{\alpha}^{k}|\inf\limits_{Q_{\alpha}^{k}}w}\int_{Q_{\alpha}^{k}}|f|w\ dx\leq\frac{C}{w(Q_{\alpha}^{k})}\int_{Q_{\alpha}^{k}}|f|w\ dx\leq Ct.
\end{align*}
We thus get
$$\int_{Q_{\alpha}^{k}}|g|^{p}w\ dx\leq Ct^{p}w(Q_{\alpha}^{k}).$$
Let $u_{1}\in W_{w}^{2,p}$ be the unique solution to
\begin{equation*}
\begin{cases}
a^{ij}D_{ij}u_{1}=b \chi_{Q_{\alpha}^{k}}&\quad\mbox{in}~\cD,\\
u_{1}=0&\quad\mbox{on}~\partial\cD.
\end{cases}
\end{equation*}
Set $R_{0}=\mbox{diam}~\cD$ and $c=4R_{0}/\delta_0$. For any $R\in  [cr_{k},R_0)$ such that $\cD\setminus B_{R}(x_{k})\neq\emptyset$ and $h\in C_{0}^{\infty}(\cD_{2R}(x_{k})\setminus B_{R}(x_{k}))$, in view of Corollary \ref{solvability adjoint} below, there exists a unique adjoint solution $u_{2}\in L_{w^{-{1}/(p-1)}}^{p'}$ of
\begin{equation*}
\begin{cases}
D_{ij}(a^{ij}u_{2})=\Div^{2}h&\quad\mbox{in}~\cD,\\
u_{2}=0&\quad\mbox{on}~\partial\cD.
\end{cases}
\end{equation*}
Let $\tilde{u}_{2}:={u_{2}}/{W}$. Then by duality and \eqref{prop bW}, we have
$$
\int_{\cD}D_{ij}u_{1}h^{ij}=\int_{Q_{\alpha}^{k}}\tilde{u}_{2}Wb
=\int_{Q_{\alpha}^{k}}(\tilde{u}_{2}-\tilde{u}_{2}(x_{k}))Wb.
$$
%we need to estimate the term
%$$\int_{\cD_{2R}(x_{k})\setminus B_{R}(x_{k})}D_{ij}u_{1}h^{ij}.$$
%Actually,
Similar to \cite[(3.6)-(3.8)]{dek} (see also \cite{b,e,fgms}), we have
\begin{align*}
&\|\tilde{u}_{2}-\tilde{u}_{2}(x_{k})\|_{L^{\infty}(Q_{\alpha}^{k})}
\leq C\big(\frac{r_{k}}{R}\big)^{\alpha}
\|\tilde{u}_{2}\|_{L^{\infty}(\cD_{{\delta_{0}R}/(2R_{0})}(x_{k}))}\\
&\leq C\big(\frac{r_{k}}{R}\big)^{\alpha}
\frac{1}{W(B_{{\delta_{0}R}/(2R_{0})}(x_{k}))}\int_{\cD_{R}(x_{k})}|\tilde{u}_{2}|W\ dx\\
&=C\big(\frac{r_{k}}{R}\big)^{\alpha}\frac{1}{W(B_{{\delta_{0}R}/(2R_{0})}(x_{k}))}\int_{\cD_{R}(x_{k})}|u_{2}|,
\end{align*}
where $\alpha\in (0,1)$. Hence, we obtain
\begin{align*}
&\left|\int_{\cD_{2R}(x_{k})\setminus B_{R}(x_{k})}D_{ij}u_{1}h^{ij}\right|\leq C\big(\frac{r_{k}}{R}\big)^{\alpha}\frac{\|W\|_{L^{\infty}
(B_{{\delta_{0}R}/(2R_{0})}(x_{k}))}}
{W(B_{{\delta_{0}R}/(2R_{0})}(x_{k}))}\|u_{2}\|_{L^{1}(\cD_{R}(x_{k}))}\|b\|_{L^{1}(Q_{\alpha}^{k})}\\
&\leq C\big(\frac{r_{k}}{R}\big)^{\alpha}R^{-n}\|u_{2}\|_{L^{1}(\cD_{R}(x_{k}))}\|b\|_{L^{1}(Q_{\alpha}^{k})}\\
&\leq C\big(\frac{r_{k}}{R}\big)^{\alpha}R^{-n}\left(\int_{\cD}|u_{2}|^{p'}w^{-\frac{1}{p-1}}\ dx\right)^{\frac{1}{p'}}\big(\int_{\cD_{2R}(x_{k})}w\ dx\big)^{\frac{1}{p}}\frac{1}{\inf\limits_{Q_{\alpha}^{k}}w}\int_{Q_{\alpha}^{k}}|b|w\ dx\\
&\leq C\big(\frac{r_{k}}{R}\big)^{\alpha}\left(\int_{\cD_{2R}(x_{k})\setminus B_{R}(x_{k})}|h|^{p'}w^{-\frac{1}{p-1}}\ dx\right)^{\frac{1}{p'}}\big(\int_{\cD_{2R}(x_{k})}w\ dx\big)^{\frac{1}{p}-1}\int_{Q_{\alpha}^{k}}|b|w\ dx,
\end{align*}
where we used \eqref{estimate W}, H\"{o}lder's inequality, the definition of $A_{1}$ Muckenhoupt weights, and \eqref{adjoint es u}. By duality, we have
$$\|D^{2}u_{1}\|_{L_{w}^{p}(\cD_{2R}(x_{k})\setminus B_{R}(x_{k}))}\leq C\big(\frac{r_{k}}{R}\big)^{\alpha}\big(\int_{\cD_{2R}(x_{k})}w\ dx\big)^{\frac{1}{p}-1}\|b\|_{L_{w}^{1}(Q_{\alpha}^{k})}.$$
Therefore, by H\"{o}lder's inequality, we obtain
$$\|D^{2}u_{1}\|_{L_{w}^{1}(\cD_{2R}(x_{k})\setminus B_{R}(x_{k}))}\leq C\big(\frac{r_{k}}{R}\big)^{\alpha}\|b\|_{L_{w}^{1}(Q_{\alpha}^{k})}.$$
The rest of proof is identical to that of Lemma \ref{weak est v} and thus omitted. Hence, we get the desired result by using Lemma \ref{general version}.
\end{proof}

We will also use the following lemma.
\begin{lemma}[Lemma 3.4 of \cite{dk}]\label{lemma A}
Let $\omega$ be a nonnegative increasing function such that $\omega(t)\leq(\ln\frac{t}{4})^{-2}$ for $0<t\leq1$, and $\tilde\omega$ be given as in \eqref{tilde phi} with $\omega$ in place of $\bar\omega$. Then for any $r\in(0,1]$, we have
$$\int_{0}^{r}\frac{\tilde\omega(t)}{t}dt\leq C\Big(\ln\frac{4}{r}\Big)^{-1},$$
where $C>0$ is some positive constant.
\end{lemma}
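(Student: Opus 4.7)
The plan is to insert the definition of $\tilde\omega$ from \eqref{tilde phi}, interchange the sum and the integral, and then estimate each of the resulting pieces using the explicit hypothesis $\omega(s)\le(\ln(s/4))^{-2}$. The building-block estimate I would establish first is
\[
\int_0^\rho \frac{\omega(s)}{s}\,ds \;\le\; \frac{1}{\ln(4/\rho)} \qquad\text{for every } \rho\in(0,1],
\]
which follows by bounding the integrand by $(\ln(s/4))^{-2}/s$ and making the substitution $u=\ln(4/s)$, reducing it to $\int_{\ln(4/\rho)}^{\infty}u^{-2}\,du$.

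Next I would write
\[
\int_0^r\frac{\tilde\omega(t)}{t}\,dt \;=\; \sum_{i=1}^\infty \kappa^{i\gamma}\,I_i(r),\qquad I_i(r):=\int_0^r \frac{\omega(\min(\kappa^{-i}t,1))}{t}\,dt,
\]
and change variables $s=\kappa^{-i}t$ to obtain $I_i(r)=\int_0^{\kappa^{-i}r}\omega(\min(s,1))/s\,ds$. Set $i_0:=\lfloor\log_\kappa r\rfloor$, so that $\kappa^{i_0+1}<r\le\kappa^{i_0}$, and split the summation at $i_0$.

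For $1\le i\le i_0$ (where $\kappa^{-i}r\le 1$), the building block yields $I_i(r)\le\bigl(\ln(4/r)-i|\ln\kappa|\bigr)^{-1}$. I would split this range further at $i_0/2$: on $1\le i\le i_0/2$ the denominator is at least $\tfrac12\ln(4/r)$ (since $i_0|\ln\kappa|\le\ln(1/r)$), and summing the convergent geometric series $\sum\kappa^{i\gamma}$ gives a contribution bounded by $C/\ln(4/r)$; on $i_0/2<i\le i_0$ the factor $\kappa^{i\gamma}\le C r^{\gamma/2}$ provides the decay. For $i>i_0$, where $\kappa^{-i}r>1$, the building block combined with $\omega(1)\le(\ln 4)^{-2}$ gives $I_i(r)\le C(1+i|\ln\kappa|)$; the tail $\sum_{i>i_0}\kappa^{i\gamma}(1+i)\le Cr^\gamma(i_0+1)$ is then $O(r^\gamma \ln(1/r))$. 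Both remainder terms $r^{\gamma/2}$ and $r^\gamma\ln(1/r)$ are absorbed into $C/\ln(4/r)$ via the elementary inequality $r^\alpha\le C_\alpha/\ln(4/r)$, valid for any $\alpha>0$ on $(0,1]$.

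The main bookkeeping obstacle is that in the first part the denominator $\ln(4/r)-i|\ln\kappa|$ shrinks from roughly $\ln(4/r)$ down to $\ln 4$ as $i$ runs over $[1,i_0]$, so it cannot be uniformly bounded below by either endpoint; the cutoff at $i_0/2$ is chosen precisely so that on one half the large denominator wins and on the other half the geometric decay of $\kappa^{i\gamma}$ wins, with the particular threshold being inessential (any fixed fraction of $i_0$ works).
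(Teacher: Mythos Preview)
The paper does not actually prove this lemma; it is quoted verbatim as Lemma~3.4 of \cite{dk} and used as a black box. So there is no in-paper proof to compare against.

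Your argument is correct. The building-block bound $\int_0^\rho \omega(s)/s\,ds \le (\ln(4/\rho))^{-1}$ via the substitution $u=\ln(4/s)$ is exactly right, and the three-way split at $i_0/2$ and $i_0$ handles the competing behaviors cleanly: on $[1,i_0/2]$ the denominator stays comparable to $\ln(4/r)$ and the geometric series converges; on $(i_0/2,i_0]$ the crude bound $I_i(r)\le 1/\ln 4$ together with $\sum_{i>i_0/2}\kappa^{i\gamma}\le C\kappa^{(i_0/2)\gamma}\le Cr^{\gamma/2}$ suffices; and the tail $i>i_0$ gives $O(r^\gamma(1+i_0))=O(r^\gamma\ln(1/r))$. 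All three pieces are then absorbed into $C(\ln(4/r))^{-1}$ by the elementary fact that $r^\alpha\ln^k(4/r)$ is bounded on $(0,1]$ for any $\alpha>0$, $k\ge 0$. One small point worth making explicit in a write-up: in the middle range you should either sum the geometric tail (giving $Cr^{\gamma/2}$ directly) or note that the extra factor of $i_0\sim\ln(1/r)$ is harmless for the same reason; your sketch alludes to the former, which is the cleaner route.
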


\begin{proof}[\bf Proof of Theorem \ref{thm weak C0}.]
By Corollary \ref{solvability adjoint}, one can see that the map $T: f\mapsto u$ is a bounded linear operator on $L_{w}^{p}(\cD)$. We follow the argument in the proof of \cite[Theorem 5.2]{dx} with minor modifications. Under the same conditions that $f\in L_{w}^{p}(\cD)$ and $Q_{\alpha}^{k}$ as mentioned in the proof of Theorem \ref{thm weak C2}, we decompose $f$ in a given cube $Q_{\alpha}^{k}$ according to the following two cases.

(i) If $\dist(x_{k},\partial\cD)\leq \delta_{0}/2$, then $B_{r_{k}}(x_{k})\cap \cD_{j}=\emptyset$, $j=1,\dots,M-1$. In this case, we take $y_{k}\in\partial\cD$ such that $|x_{k}-y_{k}|=\dist(x_{k},\partial\cD)$. Let
$$
g:=\fint_{Q_{\alpha}^{k}}f\ dx,\quad b=f-g\quad\mbox{in}~Q_{\alpha}^{k}.
$$
Then $(b)_{Q_{\alpha}^{k}}=0$ and
\begin{align*}
|g|\leq \fint_{Q_{\alpha}^{k}}|f|\ dx\leq \frac{1}{|Q_{\alpha}^{k}|\inf\limits_{Q_{\alpha}^{k}}w}\int_{Q_{\alpha}^{k}}|f|w\ dx\leq\frac{C}{w(Q_{\alpha}^{k})}\int_{Q_{\alpha}^{k}}|f|w\ dx\leq Ct,
\end{align*}
where we used the definition of $A_1$ weights and \eqref{int f weight}. Hence,
\begin{equation*}
\int_{Q_{\alpha}^{k}}|g|^{p}w\ dx\leq Ct^{p}w(Q_{\alpha}^{k}).
\end{equation*}
We now check the hypothesis regarding $b$. Let $v_{1}\in L_{w}^{p}(\cD)$ be an adjoint solution of the problem
\begin{align*}
\begin{cases}
D_{ij}(a^{ij}v_{1})=\Div^{2}(b\chi_{Q_{\alpha}^{k}})&\quad\mbox{in}~\cD,\\
v_{1}={b\chi_{Q_{\alpha}^{k}}\nu\cdot\nu}
/(A\nu\cdot\nu)&\quad\mbox{on}~\partial\cD,
\end{cases}
\end{align*}
the solvability of which follows from Corollary \ref{solvability adjoint}. Set $R_0=\mbox{diam}~\cD$ and $c=4R_{0}/\delta_{0}$. Then for any $R\in [cr_{k},R_0)$ such that $\cD\setminus B_{R}(x_{k})\neq\emptyset$ and $h\in C_{0}^{\infty}(\cD_{2R}(x_{k})\setminus B_{R}(x_{k}))$, let $v_{2}\in W_{w^{-1/(p-1)}}^{2,p'}(\cD)$ be a strong solution of
\begin{align*}
\begin{cases}
a^{ij}D_{ij}v_{2}=h&\quad\mbox{in}~\cD,\\
v_{2}=0&\quad\mbox{on}~\partial\cD.
\end{cases}
\end{align*}
By using Definition \ref{def adjoint}, the matrix $b$ is supported in $Q_{\alpha}^{k}$ with mean zero, and $h\in C_{0}^{\infty}(\cD_{2R}(x_{k})\setminus B_{R}(x_{k}))$, we have
\begin{align}\label{identity v1 h}
\int_{\cD_{2R}(x_{k})\setminus B_{R}(x_{k})}v_{1}h&=\int_{Q_{\alpha}^{k}}D_{ij}v_{2}b^{ij}=\int_{Q_{\alpha}^{k}}\big(D_{ij}v_{2}
-D_{ij}v_{2}(x_{k})\big)b^{ij}.
\end{align}
Since $R\leq R_{0}$, $B_{{\delta_{0}R}/(2R_{0})}(x_{k})$ does not intersect with sub-domains. Also, $a^{ij}D_{ij}v_{2}=0$ in $\cD_{R}(x_{k})$. Then by flattening the boundary and using a similar argument that led to an a
priori estimate of the modulus of continuity of $D^{2}v_{2}$ in the proof of \cite[Theorem 1.5]{dek}, we have
\begin{align}\label{est boundary}
|D^{2}v_{2}(x)-D^{2}v_{2}(x_{k})|\leq C\left(\Big(\frac{|x-x_{k}|}{R}\Big)^{\gamma}+\omega_{A}^{*}(|x-x_{k}|)\right)R^{-n}\|D^{2}v_{2}\|_{L^{1}
(\cD_{{\delta_{0}R}/(2R_{0})}(x_{k}))}
\end{align}
for any $x\in Q_{\alpha}^{k}\subset\cD_{{\delta_{0}R}/(4R_{0})}(x_{k})$, where $\gamma\in(0,1)$ is a constant and for $0<t\leq1$,
$$
\omega_{A}^{*}(t):=\hat{\omega}_{A}(t)+\int_{0}^{t}\frac{\tilde{\omega}_{A}(s)}{s}\ ds+\tilde{\omega}_{A}(4t)+\int_{0}^{t}\frac{\tilde{\omega}_A(4s)}{s}\ ds,
$$
$$
\hat{\omega}_{A}(t)
:=\tilde{\omega}_{A}(t)+\tilde{\omega}_{A}(4t)+\omega_{A}^{\sharp}(4t),\quad
\text{and}\quad
\omega_{A}^{\sharp}(t)
:=\sup_{s\in[t,1]}(t/s)^{\gamma}\tilde{\omega}_{A}(s).
$$
Then, coming back to \eqref{identity v1 h}, we obtain
\begin{align*}
&\left|\int_{\cD_{2R}(x_{k})\setminus B_{R}(x_{k})}v_{1}h\right|\leq\frac{1}{\inf\limits_{Q_{\alpha}^{k}}w}\|b\|_{L_{w}^{1}(Q_{\alpha}^{k})}
\|D^{2}v_{2}-D^{2}v_{2}(x_{k})\|_{L^{\infty}(Q_{\alpha}^{k})}\\
&\leq\frac{CR^{-n}}{\inf\limits_{\cD_{2R}(x_{k})}w}\|b\|_{L_{w}^{1}(Q_{\alpha}^{k})}
\|D^{2}v_{2}\|_{L^{1}(\cD_{{\delta_{0}R}/(2R_{0})}(x_{k}))}\left(r_{k}^{\gamma}R^{-\gamma}+\Big(\ln\frac{4}{r_{k}}\Big)^{-1}\right)\\
&\leq C\big(\int_{\cD_{2R}(x_{k})}w\ dx\big)^{\frac{1}{p}-1}\|b\|_{L_{w}^{1}(Q_{\alpha}^{k})}\left(\int_{\cD}|D^{2}v_{2}|^{p'}w^{-\frac{1}{p-1}}\ dx\right)^{\frac{1}{p'}}\left(r_{k}^{\gamma}R^{-\gamma}+\Big(\ln\frac{4}{r_{k}}\Big)^{-1}\right)\\
&\leq C\big(\int_{\cD_{2R}(x_{k})}w\ dx\big)^{\frac{1}{p}-1}\|b\|_{L_{w}^{1}(Q_{\alpha}^{k})}\left(\int_{\cD_{2R}(x_{k})\setminus B_{R}(x_{k})}|h|^{p'}w^{-\frac{1}{p-1}}\ dx\right)^{\frac{1}{p'}}\left(r_{k}^{\gamma}R^{-\gamma}+\Big(\ln\frac{4}{r_{k}}\Big)^{-1}\right),
\end{align*}
where we used \eqref{est boundary}, Lemma \ref{lemma A}, the definition of $A_{1}$ Muckenhoupt weights, H\"{o}lder's inequality, and the estimate
$$\left(\int_{\cD}|D^{2}v_{2}|^{p'}w^{-\frac{1}{p-1}}\ dx\right)^{\frac{1}{p'}}\leq C\left(\int_{\cD}|h|^{p'}w^{-\frac{1}{p-1}}\ dx\right)^{\frac{1}{p'}}=C\left(\int_{\cD_{2R}(x_{k})\setminus B_{R}(x_{k})}|h|^{p'}w^{-\frac{1}{p-1}}\ dx\right)^{\frac{1}{p'}}.$$
The rest of proof is identical to that of Theorem \ref{thm weak C2}. We thus obtain
%By the duality, we have
%$$\|v_{1}\|_{L_{w}^{p}(\cD_{2R}(x_{k})\setminus B_{R}(x_{k}))}\leq C\big(\int_{Q_{\alpha}^{k}}w\ dx\big)^{\frac{1}{p}-1}\|b\|_{L_{w}^{1}(Q_{\alpha}^{k})}\left(r_{k}^{\gamma}R^{-\gamma}+\Big(\ln\frac{4}{r_{k}}\Big)^{-1}\right).$$
%Therefore, by H\"{o}lder's inequality, we obtain
%$$\|v_{1}\|_{L_{w}^{1}(\cD_{2R}(x_{k})\setminus B_{R}(x_{k}))}\leq C\|b\|_{L_{w}^{1}(Q_{\alpha}^{k})}\left(r_{k}^{\gamma}R^{-\gamma}+\Big(\ln\frac{4}{r_{k}}\Big)^{-1}\right).$$}
\begin{align*}
\int_{\cD\setminus B_{cr_{k}}(x_{k})}|v_{1}|w\ dx \leq C\int_{Q_{\alpha}^{k}}|b|w\ dx\leq C\int_{Q_{\alpha}^{k}}|f|w\ dx+C\int_{Q_{\alpha}^{k}}|g|w\ dx\leq Ctw(Q_{\alpha}^{k}).
\end{align*}
That is,
$$\int_{\cD\setminus B_{cr_{k}}(x_{k})}|Tb\chi_{Q_{\alpha}^{k}}|w\ dx\leq Ctw(Q_{\alpha}^{k}).$$
%Theorem \ref{thm weak C0} is proved in this case.

(ii) If $\dist(x_{k},\partial\cD)\geq \delta_{0}/2$, then $B_{r_{k}}(x_{k})\cap \partial\cD=\emptyset$. In this case, we choose the coordinate system according to $x_{k}$. In $Q_\alpha^k$, we set
\begin{align*}
g^{ij}=
\fint_{Q_{\alpha}^{k}}\big(f^{ij}-\frac{a^{ij}}{a^{nn}}f^{nn}\big)\ dx+\frac{a^{ij}}{a^{nn}}\fint_{Q_{\alpha}^{k}}f^{nn}\ dx,\,\,(i,j)\neq(n,n),\quad g^{nn}=\fint_{Q_{\alpha}^{k}}f^{nn}\ dx,
\end{align*}
and $b=f-g$. Then
\begin{equation*}
\int_{Q_{\alpha}^{k}}|g|^{p}w\ dx\leq Ct^{p}w(Q_{\alpha}^{k}).
\end{equation*}
Let
\begin{align*}
\tilde{b}^{ij}=b^{ij}-\frac{a^{ij}}{a^{nn}}b^{nn},\quad(i,j)\neq(n,n),\quad
\tilde{b}^{nn}=b^{nn}.
\end{align*}
Then $(\tilde{b})_{Q_{\alpha}^{k}}=0$.
It then follow from the argument as in the first case that
\begin{align}\label{iden v1 f11}
&\int_{\cD_{2R}(x_{k})\setminus B_{R}(x_{k})}v_{1}h=\int_{Q_{\alpha}^{k}}D_{ij}v_{2}b^{ij}\nonumber\\
&=\int_{Q_{\alpha}^{k}}\sum_{(i,j)\neq(n,n)}D_{ij}v_{2}\tilde{b}^{ij}
=\int_{Q_{\alpha}^{k}}\sum_{(i,j)\neq(n,n)}\big(D_{ij}v_{2}
-D_{ij}v_{2}(x_{k})\big)\tilde{b}^{ij},
\end{align}
where we used
$$
D_{nn}v_{2}=-\sum_{(i,j)\neq(n,n)}\frac{a^{ij}}{a^{nn}}D_{ij}v_{2}\quad\mbox{in}~B_{r_{k}}(x_{k}).
$$
Recalling that $cr_{k}\leq R\leq R_{0}$ so that $B_{{\delta_{0}R}/(2R_{0})}(x_{k})\cap \partial\cD=\emptyset$. Then by using a similar argument that led to \eqref{C2 est1} (or \eqref{C2 est}, \eqref{C2 est2}), we obtain
\begin{align*}
&|D_{xx'}v_{2}(x)-D_{xx'}v_{2}(x_{k})|\\
&\leq C R^{-n}\left(\Big(\frac{|x-x_{k}|}{R}\Big)^{\gamma}+\int_{0}^{|x- x_{k}|}\frac{\tilde{\omega}_{A}(t)}{t}dt\right)
\|D_{xx'}v_{2}\|_{L^{1}(B_{{\delta_{0}R}/(2R_{0})}(x_{k}))}
\end{align*}
for any $x\in Q_{\alpha}^{k}\subset B_{{\delta_{0}R}/(4R_{0})}(x_{k})$.
Coming back to \eqref{iden v1 f11} and using a similar argument as in the case (i), we obtain
\begin{align*}
&\int_{\cD\setminus B_{cr_{k}}(x_{k})}|v_{1}|w\ dx\leq C\int_{Q_{\alpha}^{k}}|\tilde{b}|w\ dx\\
&\leq C\int_{Q_{\alpha}^{k}}|b|w\ dx\leq C\int_{Q_{\alpha}^{k}}|f|w\ dx+C\int_{Q_{\alpha}^{k}}|g|w\ dx\leq Ctw(Q_{\alpha}^{k}).
\end{align*}
By Lemma \ref{general version}, the theorem is proved.
\end{proof}

\section{Appendix}

In the appendix, we give the $W_{w}^{2,p}$-estimate and solvability for the non-divergence form elliptic equation in $C^{1,1}$ domains with the zero Dirichlet boundary condition.
%, which is a generalization of \cite[Theorem 6.3]{dk2} with a fixed ``measurable'' direction.
Consider
\begin{equation}\label{eq weighted}
\begin{cases}
\lambda u-Lu=f&\quad\mbox{in}~\cD,\\
u=0&\quad\mbox{on}~\partial\cD,
\end{cases}
\end{equation}
where $\lambda\geq0$, $\cD\in C^{1,1}$, and $Lu:=a^{ij}D_{ij}u+b^{i}D_{i}u+cu$. Let $p\in(1,\infty)$ and $w$ be an $A_p$ weight. Denote
$$
\mathring{W}_{w}^{2,p}(\cD):=\{u\in W_{w}^{2,p}(\cD): u=0~\mbox{on}~\partial\cD\}.
$$
Now we impose the regularity assumptions on $a^{ij}$. Let $\gamma_{0}=\gamma_{0}(n,p,\delta,[w]_{A_{p}})\in(0,1)$ be a sufficiently small constant to be specified. There exists a constant $r_{0}\in(0,1)$ such that $a^{ij}$ satisfy \eqref{condi A} in the interior of $\cD$ and are VMO near the boundary: for any $x_{0}\in\partial\cD$ and $r\in(0,r_{0}]$, we have
$$
\fint_{B_{r}(x_{0})}|a^{ij}(x)-(a^{ij})_{B_{r}(x_{0})}|\ dx\leq\gamma_{0}.
$$
In addition, $b^{i}$ and $c$ are bounded by a constant $\Lambda$. Then we have the following

\begin{theorem}\label{solvability weight Lp}
Let $p\in (1,\infty)$, $w\in A_p$, and $L1\leq0$. There exists a sufficient small constant $\gamma_0=\gamma_{0}(n,p,\delta,[w]_{A_{p}})\in(0,1)$ such that under the above conditions, for any $\lambda\geq0$ and $f\in L_{w}^{p}(\cD)$, there exists a unique $u\in W_{w}^{2,p}(\cD)$ satisfying \eqref{eq weighted}. Furthermore, there exists a constant $C=C(n,p,\delta,\Lambda,\cD,[w]_{A_{p}},r_{0})$ such that
$$\|u\|_{W_{w}^{2,p}(\cD)}\leq C\|f\|_{L_{w}^{p}(\cD)}.$$
\end{theorem}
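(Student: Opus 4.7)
The plan is to establish the a priori weighted $W^{2,p}$ estimate via a standard interior/boundary localization, and then invoke the method of continuity for existence.

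First, for any interior ball $B_r(x_0) \subset \cD$ with $r \leq r_0$, the partially-VMO condition \eqref{condi A} and the boundedness of the lower-order coefficients allow a direct application of the interior weighted estimate from \cite[Theorem 2.5]{d1}, which already accommodates $A_p$ weights. For a boundary point $x_0 \in \partial\cD$, I would use the $C^{1,1}$ regularity of $\partial\cD$ to locally flatten via a $C^{1,1}$ diffeomorphism $\Phi$; since $D\Phi$ and $D\Phi^{-1}$ are bounded with bounded derivatives, the pulled-back leading coefficients remain VMO (with a possibly enlarged modulus), and the pushed-forward weight stays in $A_p$ with a comparable constant. This reduces the boundary estimate to a half-ball weighted $W^{2,p}$ estimate with zero Dirichlet data on the flat portion, which is again covered by the weighted theory of \cite[Theorem 2.5]{d1} after a reflection/extension of the coefficients across the flat boundary.

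Next, using a partition of unity subordinate to a finite cover of $\overline{\cD}$ by such interior balls and flattened boundary neighborhoods, together with the commutator estimates in weighted spaces, I would combine these local estimates into the global inequality
\[
\|u\|_{W^{2,p}_w(\cD)} \leq C\big(\|\lambda u - Lu\|_{L^p_w(\cD)} + \|u\|_{L^p_w(\cD)}\big).
\]
To absorb $\|u\|_{L^p_w(\cD)}$ into the left-hand side, I would proceed as in \cite[Sections 8.5 and 11.3]{k}: exploit the sign condition $L1 \leq 0$ (i.e., $c \leq 0$) and $\lambda \geq 0$. A weighted maximum principle argument, combined with a duality pairing against the adjoint problem whose solvability is provided by Lemma \ref{sol adjoint}, then yields $\|u\|_{L^p_w(\cD)} \leq C\|\lambda u - Lu\|_{L^p_w(\cD)}$, completing the a priori estimate.

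For existence, the plan is to apply the method of continuity along the family $L_t := (1-t)\Delta + tL$ for $t \in [0,1]$. Each $L_t$ satisfies the same structural hypotheses with uniform constants: the VMO/partially-VMO condition on the leading coefficients is preserved under convex combinations with constants, and $L_t 1 = tc \leq 0$. Thus the a priori estimate holds uniformly in $t$. At $t=0$ the weighted solvability of $\Delta - \lambda$ in $\mathring{W}^{2,p}_w(\cD)$ with a $C^{1,1}$ boundary is classical for $w \in A_p$, so the method of continuity delivers solvability at $t=1$, and uniqueness follows from the a priori estimate applied to the difference of two solutions.

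The main obstacle will be the weighted boundary estimate, since \cite[Theorem 2.5]{d1} is most cleanly stated in the interior/half-space setting; the careful verification that flattening preserves the partially-VMO structure (not just VMO), and that the $A_p$ constant and $r_0$ depend only on the allowed parameters, requires some bookkeeping. The sign assumption $L1 \leq 0$ is what ultimately allows $\lambda = 0$ to be included; without it one would only obtain solvability for $\lambda \geq \lambda_0$ as in Lemma \ref{solvability}.
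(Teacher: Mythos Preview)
Your overall architecture (local weighted estimates, partition of unity to get the global estimate with the extra $\|u\|_{L^p_w(\cD)}$ term, then method of continuity) is sound, and indeed the paper also takes the large-$\lambda$ case as known from \cite{dk2,d1,k}. The gap is in your absorption step. You say you will remove $\|u\|_{L^p_w(\cD)}$ via ``a weighted maximum principle argument, combined with a duality pairing against the adjoint problem whose solvability is provided by Lemma~\ref{sol adjoint}.'' Neither half of this works as stated. The maximum principle controls $\|u\|_{L^\infty}$, not $\|u\|_{L^p_w}$, and there is no direct passage from one to the other uniformly in the data. More seriously, Lemma~\ref{sol adjoint} is an \emph{unweighted} $L^q$ solvability statement; the weighted adjoint solvability you would need is precisely Corollary~\ref{solvability adjoint}, which is derived \emph{from} Theorem~\ref{solvability weight Lp}, so invoking it here is circular.

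What the paper actually does for the absorption (and what \cite[Section~11.3]{k} does, despite your citation) is quite different. First, for $\lambda\ge\lambda_1$ large the extra term is absorbed by the usual $\lambda\|u\|_{L^p_w}$ on the left. For $\lambda\in[\epsilon_0,\lambda_1)$ with any $\epsilon_0>0$, one writes $u=(\lambda_1-\lambda)\cR_{\lambda_1}u+\cR_{\lambda_1}f$ with $\cR_{\lambda_1}=(\lambda_1-L)^{-1}$ and iterates; the key input is that $\cR_{\lambda_1}^{m_0}$ maps $L^p_w\to L^\infty$ for some finite $m_0$ (via weighted Sobolev embedding and bootstrapping, Lemma~\ref{prop cR}), together with the pointwise contraction $|\cR_{\lambda_1}g|\le\lambda_1^{-1}\sup|g|$. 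This forces the iterated term to decay like $(1-\epsilon_0/\lambda_1)^m$ and can be made $\le 1/2$. Finally, to reach $\lambda=0$, the paper conjugates by a global barrier $v_0$ with $Lv_0\le -1$, so that the transformed operator $L'u=v_0^{-1}L(v_0u)$ satisfies $L'1\le-\delta'<0$; replacing $L$ by $L'+\delta'$ shifts the problem to $\lambda\ge\delta'>0$, where the previous step applies. This resolvent-iteration plus barrier device is the substantive content you are missing; a direct ``weighted maximum principle plus duality'' shortcut is not available.
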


First we note that when $\lambda\ge \lambda_1(n,p,\delta,\lambda, \cD, [w]_{A_{p}},r_{0})$, the theorem follows from the proofs of \cite[Theorems 6.3 and 6.4]{dk2} combined with the argument in \cite[Theorem 2.5]{d1} and \cite[Sections 8.5]{k}. To deal with the case when $\lambda\in [0,\lambda_1)$, we need the following lemmas.
The first one is a local regularity of solutions in weighted Sobolev spaces.

\begin{lemma}\label{appen lem regularity}
Let $1<p\leq q<\infty$, $z\in\overline{\cD}$. Denote $\cD_{r}:=\cD\cap B_{r}(z)$. Then if
\begin{equation}\label{condition}
\xi u\in\mathring{W}_{w}^{2,p}(\cD_{2R})\quad\forall~\xi\in C_{0}^{\infty}(B_{2R}(z)),\quad Lu\in L_{w}^{q}(\cD_{2R}),
\end{equation}
we have
\begin{equation}\label{regularity xi u}
\xi u\in\mathring{W}_{w}^{2,q}(\cD_{2R})\quad\forall~\xi\in C_{0}^{\infty}(B_{2R}(z)).
\end{equation}
Furthermore, there exists a constant $C=C(R,p,q,n,\delta,\Lambda,[w]_{A_{p}},r_{0})$ such that if \eqref{condition} holds, then
\begin{equation}\label{es u W2q}
\|u\|_{W_{w}^{2,q}(\cD_{R})}\leq C\big(\|Lu\|_{L_{w}^{q}(\cD_{2R})}+\|u\|_{L_{w}^{p}(\cD_{2R})}\big).
\end{equation}
\end{lemma}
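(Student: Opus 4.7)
The plan is to prove Lemma~\ref{appen lem regularity} by a finite bootstrap built on the $W_{w}^{2,p}$-solvability of Theorem~\ref{solvability weight Lp} together with a quantitative weighted Sobolev embedding for $A_{p}$-weights. First I would fix a cutoff $\xi \in C_{0}^{\infty}(B_{2R}(z))$ and compute
\[
L(\xi u) = \xi\,Lu + 2a^{ij}(D_{i}\xi)(D_{j}u) + u\big(a^{ij}D_{ij}\xi + b^{i}D_{i}\xi\big) =: F_{\xi}
\qquad \text{in } \cD_{2R}.
\]
Because the hypothesis \eqref{condition} gives $\xi u \in \mathring{W}_{w}^{2,p}(\cD_{2R})$, extending $\xi u$ by zero yields $\xi u \in \mathring{W}_{w}^{2,p}(\cD)$ solving $L(\xi u) = F_{\xi}$ in $\cD$ with vanishing trace on $\partial\cD$; hence Theorem~\ref{solvability weight Lp} applies to $\xi u$ at every integrability exponent at which $F_{\xi}$ makes sense.

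The bootstrap would use a finite chain of nested cutoffs $\xi^{(0)} \succ \xi^{(1)} \succ \cdots \succ \xi^{(N)}$ in $C_{0}^{\infty}(B_{2R}(z))$, with $\xi^{(k)} \equiv 1$ on $\operatorname{supp}\xi^{(k+1)}$ and $\xi^{(N)} \equiv 1$ on $B_{R}(z)$. I would set $p_{0} := p$ and $p_{k+1} := \min(q,\sigma p_{k})$, where $\sigma = \sigma(n,p,[w]_{A_{p}}) > 1$ is the self-improvement gain in the Fabes--Kenig--Serapioni weighted Sobolev embedding $\mathring{W}_{w}^{1,p_{k}} \hookrightarrow L_{w}^{\sigma p_{k}}$ (replaced by Morrey's inequality once $p_{k}$ is near or above $n$). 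Inductively assuming $\xi^{(k)} u \in \mathring{W}_{w}^{2,p_{k}}$, the embedding puts $\xi^{(k)}u$ and $D(\xi^{(k)}u)$ in $L_{w}^{\sigma p_{k}}$. Since $D\xi^{(k+1)}$ and $D^{2}\xi^{(k+1)}$ are supported inside $\{\xi^{(k)} \equiv 1\}$, the lower-order terms of $F_{\xi^{(k+1)}}$ inherit $L_{w}^{\sigma p_{k}}$-bounds; combined with $\xi^{(k+1)} Lu \in L_{w}^{q}$, this gives $F_{\xi^{(k+1)}} \in L_{w}^{p_{k+1}}(\cD)$. Then Theorem~\ref{solvability weight Lp} and Dirichlet uniqueness upgrade $\xi^{(k+1)} u$ to $\mathring{W}_{w}^{2,p_{k+1}}(\cD)$ with a quantitative bound. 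After $N = N(p,q,\sigma)$ steps one has $p_{N} = q$, yielding \eqref{regularity xi u}; composing the intermediate estimates and inserting one additional outermost cutoff (to trade a $W_{w}^{2,p}$-norm for an $L_{w}^{p}$-norm on $\cD_{2R}$ via a standard Caccioppoli step) would produce \eqref{es u W2q}.

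The hard part will be ensuring the Sobolev gain $\sigma > 1$ with constants depending only on $(n, p, [w]_{A_{p}})$, which is the Fabes--Kenig--Serapioni theorem for $A_{p}$-weighted Sobolev spaces, and treating the borderline case $p_{k} \sim n$ cleanly (either by Morrey's embedding for $p_{k} > n$ or by the self-improvement $A_{p} \subset A_{p-\varepsilon}$). Beyond that, the argument is routine: the uniqueness part of Theorem~\ref{solvability weight Lp} guarantees that the same function $\xi^{(k+1)} u$ inherits the higher integrability at each bootstrap level (rather than being replaced by an a priori distinct $W_{w}^{2,p_{k+1}}$ solution), and the final constant's dependence on $R$ is absorbed into the finite cutoff count $N$.
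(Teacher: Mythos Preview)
Your bootstrap via nested cutoffs and the Fabes--Kenig--Serapioni weighted Sobolev embedding is exactly the mechanism the paper uses (they take the fixed gain $\alpha=n/(n-1)$, set $p(j)=\alpha^j p$, and cite \cite{ecr} for the embedding). The architecture is right.

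However, there is a genuine circularity: you invoke Theorem~\ref{solvability weight Lp} to obtain the $\mathring{W}_w^{2,p_{k+1}}$ estimate and uniqueness at each step, but in the paper Lemma~\ref{appen lem regularity} is a building block for Theorem~\ref{solvability weight Lp} (via Lemmas~\ref{prop cR} and~\ref{sov es weight LP}). Moreover, Theorem~\ref{solvability weight Lp} carries the hypothesis $L1\le 0$, which is \emph{not} assumed in Lemma~\ref{appen lem regularity}, so even setting aside the circularity you could not apply it as stated. The paper avoids both problems by working not with $L(\xi u)=F_\xi$ but with $(L-\lambda)(\xi u)=g$ for $\lambda$ large: the large-$\lambda$ solvability and estimate in $\mathring{W}_w^{2,p(j)}(\cD)$ are available unconditionally (this is the remark immediately preceding Lemma~\ref{appen lem regularity}, citing \cite{dk2,d1,k}), and they give both the a~priori bound and the uniqueness needed to identify the $\mathring{W}_w^{2,p(j+1)}$ solution of $(L-\lambda)v=g$ with $\xi u$. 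If you replace your appeal to Theorem~\ref{solvability weight Lp} by this large-$\lambda$ input, your argument becomes essentially the paper's.
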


\begin{proof}
We follow the proof of \cite[Theorem 11.2.3]{k} when $w=1$. For $q=p$, \eqref{regularity xi u} is obvious and \eqref{es u W2q} is obtained by using the method in the proof of \cite[Theorem 9.4.1]{k}. For $q>p$, we define
\begin{align*}
\alpha=\frac{n}{n-1}\quad\mbox{for}~n\geq2;\quad p(j)=\alpha^{j}p,\quad j=0,1,\dots,k-1,\quad p(k)=q,
\end{align*}
where $k-1$ is the last $j$ such that $p(j)<q$. Take $\lambda$ sufficiently large that $\lambda-L$, as an operator acting from $\mathring{W}_{w}^{2,p(j)}(\cD)$ onto $L_{w}^{p(j)}(\cD)$ for $j=0,1,\dots,k$,  is invertible. %We also take a $u$ such that \eqref{condition} holds, take a $\xi\in C_{0}^{\infty}(B_{2R}(z))$ and
Denote
$$f=Lu,\quad g=(L-\lambda)(\xi u)=\xi f+2a^{ij}D_{i}uD_{j}\xi+u(L-c-\lambda)\xi.$$
By weighted Sobolev embedding theorem, see \cite[Theorem 1.3]{ecr}, we have
\begin{equation*}%\label{p(1)}
\zeta u\in W_{w}^{1,p(1)}(\cD)
\end{equation*}
for any $\zeta\in C_{0}^{\infty}(B_{2R}(z))$. Hence, $g\in L_{w}^{p(1)}(\cD)$. By the choice of $\lambda$, the equation
$$(L-\lambda)v=g$$
has a solution in $\mathring{W}_{w}^{2,p(1)}(\cD)\subset\mathring{W}_{w}^{2,p}(\cD)$ which is unique in $\mathring{W}_{w}^{2,p}(\cD)$. We thus obtain that for $j=1$,
\begin{equation}\label{case j=1}
v=\xi u\in \mathring{W}_{w}^{2,p(1)}(\cD),\quad\forall~\xi\in C_{0}^{\infty}(B_{2R}(z)).
\end{equation}
If $p(1)<q$, then by repeating this argument with $p(1)$ in place of $p$, we get \eqref{case j=1} for $j=2$. In this way we conclude \eqref{regularity xi u}.

Next we prove \eqref{es u W2q}. By the choice of $\lambda$, for $j\geq1$ and any $\xi,\eta\in C_{0}^{\infty}(B_{2R}(z))$ such that $\eta=1$ on the support of $\xi$, we have
\begin{align*}
\|\xi u\|_{W_{w}^{2,p(j)}(\cD)}&\leq C\|\xi f+2a^{ij}D_{i}uD_{j}\xi+u(L-c-\lambda)\xi\|_{L_{w}^{p(j)}(\cD)}\\
&\leq C\big(\|f\|_{L_{w}^{q}(\cD_{2R})}+\|\eta u\|_{W_{w}^{1,p(j)}(\cD)}\big)\\
&\leq C\big(\|f\|_{L_{w}^{q}(\cD_{2R})}+\|\eta u\|_{W_{w}^{2,p(j-1)}(\cD)}\big),
\end{align*}
where we used the weighted Sobolev embedding theorem in the last inequality. By iterating the above inequality, we obtain that for any $\xi\in C_{0}^{\infty}(B_{3R/2}(z))$, there is an $\eta\in C_{0}^{\infty}(B_{7R/4}(z))$ such that
$$\|\xi u\|_{W_{w}^{2,q}(\cD)}\leq C\big(\|f\|_{L_{w}^{q}(\cD_{2R})}+\|\eta u\|_{W_{w}^{2,p}(\cD)}\big).$$
Finally, recalling the conclusion for the case when $p=q$, we have
$$\|\eta u\|_{W_{w}^{2,p}(\cD)}\leq C\|u\|_{W_{w}^{2,p}(\cD_{7R/4})}\leq C\big(\|f\|_{L_{w}^{p}(\cD_{7R/2})}+\| u\|_{L_{w}^{p}(\cD_{7R/2})}\big).$$
This yields \eqref{es u W2q} and the lemma is proved.
\end{proof}

%In the sequel, we let $\cD\subset B_{R}$  for a large $R$ such that $\cD=\cD_{R}(:=\cD\cap B_{R})=\cD_{2R}$.

Next we recall the resolvent operator of $L-\lambda I$ by
$$\cR_{\lambda}: L_{w}^{p}(\cD)\rightarrow \mathring{W}_{w}^{2,p}(\cD).$$
Then $\cR_{\lambda}$ is a bounded operator for $\lambda\geq\lambda_{1}$. %, by using a similar argument{\color{red}s} used in the proof of \cite[Theorem 8.5.6]{k}.
The following properties of $\cR_{\lambda}$ for $\lambda$ large play an important role in proving Lemma \ref{sov es weight LP} below.

\begin{lemma}\label{prop cR}
Let the coefficients of $L$ be infinitely differentiable, $L1\leq0$, and $\lambda\geq\lambda_{1}$. Then
\begin{enumerate}
\item
For any bounded $f$ and any $\gamma\in (0,1)$, we have $\cR_{\lambda}f\in C^{1+\gamma}(\cD)$, $\cR_{\lambda}f=0$ on $\partial\cD$, and in $\cD$,
\begin{equation}\label{ine cR f}
|\cR_{\lambda}f(x)|\leq\cR_{\lambda}|f|(x)\leq\lambda^{-1}\sup_{x\in\cD}|f(x)|.
\end{equation}
\item
There exists an integer $m_{0}=m_{0}(n,p,\delta,\Lambda,\cD,[w]_{A_{p}},r_{0})$, such that for any $f\in L_{w}^{p}(\cD)$, we have
\begin{equation}\label{es cR}
\sup_{x\in\cD}|\cR_{\lambda_{1}}^{m_{0}}f(x)|\leq C\|f\|_{L_{w}^{p}(\cD)},
\end{equation}
where $C=C(n,p,\delta,\Lambda,\cD,[w]_{A_{p}},r_{0})$.
\end{enumerate}
\end{lemma}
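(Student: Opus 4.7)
For (1), starting from bounded $f$, I would invoke the weighted $W^{2,p}$-solvability already established for $\lambda\ge\lambda_1$ to get $u:=\cR_\lambda f\in\mathring{W}^{2,p}_w(\cD)$, then rewrite the equation as $Lu=\lambda u-f$ and use the smoothness of the coefficients together with $\partial\cD\in C^{1,1}$, the classical (unweighted) $W^{2,q}$-theory for every $q<\infty$, and Sobolev embedding to conclude $u\in C^{1+\gamma}(\overline\cD)$ with $u=0$ on $\partial\cD$. The two pointwise inequalities in \eqref{ine cR f} I would obtain from the weak maximum principle for the operator $\lambda I-L$, whose zeroth order coefficient $\lambda-c\ge\lambda>0$ is favorable. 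Applied to $\cR_\lambda|f|\pm\cR_\lambda f$, which satisfy $(\lambda-L)(\cR_\lambda|f|\pm\cR_\lambda f)=|f|\pm f\ge 0$ with vanishing boundary data, it yields $|\cR_\lambda f|\le\cR_\lambda|f|$; applied to $v:=\lambda^{-1}\sup_\cD|f|-\cR_\lambda|f|$ it yields $v\ge 0$ in $\cD$, since $(\lambda-L)v=\sup_\cD|f|-c\lambda^{-1}\sup_\cD|f|-|f|\ge 0$ (using $c\le 0$) and $v\ge 0$ on $\partial\cD$.

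For (2), the plan is to iterate $\cR_{\lambda_1}$ and bootstrap integrability using the weighted Sobolev embedding already used in the proof of Lemma \ref{appen lem regularity}. Writing $u_m:=\cR_{\lambda_1}^m f$, one has $Lu_m=\lambda_1 u_m-u_{m-1}$ in $\cD$ and $u_m=0$ on $\partial\cD$. With $\alpha:=n/(n-1)$ and $p(j):=p\alpha^j$, combining the weighted $W^{2,q}$-solvability (which applies for every $q\ge p$ because $A_p\subset A_q$ when $q\ge p$) with the embedding $W^{1,q}_w\hookrightarrow L^{q\alpha}_w$ (cf.\ \cite[Theorem 1.3]{ecr}) and inductively applying Lemma \ref{appen lem regularity}, I expect to obtain
$$
\|u_m\|_{L^{p(m)}_w(\cD)}\le C_m\|f\|_{L^p_w(\cD)},\qquad m\ge 1,
$$
so after finitely many iterations the integrability exponent $p(m)$ is as large as desired.

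The hard part will be converting this high weighted integrability into an $L^\infty$-bound. To overcome it, I would rely on the reverse H\"older property of $A_p$ weights, which furnishes some $\delta>0$ with $w^{-1/(p-1)}\in L^{1+\delta}(\cD)$. H\"older's inequality then implies that once $m$ is large enough, $u_m\in L^r(\cD)$ unweighted for $r$ arbitrarily large; fixing $r>n$, one final application of the classical (unweighted) $W^{2,r}$-theory to $u_{m+1}$ (which satisfies $Lu_{m+1}=\lambda_1 u_{m+1}-u_m$), combined with the Morrey embedding $W^{2,r}\hookrightarrow L^\infty$, yields $\|u_{m+1}\|_{L^\infty(\cD)}\le C\|f\|_{L^p_w(\cD)}$, proving \eqref{es cR} with $m_0:=m+1$. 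Boundary regularity is accommodated in these iterations by the standard local flattening permitted by $\partial\cD\in C^{1,1}$ and the smoothness of the coefficients.
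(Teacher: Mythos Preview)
Your proposal is correct and follows essentially the same route as the paper. For part (1) the paper simply cites \cite[Theorem 11.2.1(3)]{k}, whose content is precisely the maximum principle argument you sketch. For part (2) both you and the paper iterate $u_j=\cR_{\lambda_1}^j f$, set $p(j)=p(n/(n-1))^j$, and combine the weighted $W^{2,p(j)}$ solvability, Lemma \ref{appen lem regularity}, and the weighted Sobolev embedding to obtain $\|u_{j+1}\|_{W^{2,p(j)}_w}\le C\|f\|_{L^p_w}$.

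The only difference is in the last step, converting high weighted integrability to an $L^\infty$ bound. You invoke the reverse H\"older property of $w^{-1/(p-1)}$ to pass from $L^{p(m)}_w$ to unweighted $L^r$, then run one more unweighted $W^{2,r}$ iteration and use Morrey. The paper instead notes directly, via H\"older with exponents $p$ and $p/(p-1)$ and only the $A_p$ definition (no reverse H\"older), that $\|v\|_{L^{q/p}}\le \|v\|_{L^q_w}\bigl(\int_\cD w^{-1/(p-1)}\bigr)^{(p-1)/q}$, so $u_{j+1}\in W^{2,p(j)}_w$ already lies in the unweighted space $W^{2,p(j)/p}$; choosing $p(j)>np/2$ and applying the Sobolev embedding finishes. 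The paper's endgame is thus slightly cleaner (one fewer iteration, and only the bare $A_p$ condition is used), but your argument is equally valid.
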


\begin{proof}
For $f\in L^{\infty}(\cD)$, \eqref{ine cR f} is proved in \cite[Theorem 11.2.1(3)]{k}.
To prove \eqref{es cR}, we set $\alpha={n}/(n-1)$, $p(j)=\alpha^{j}p$, and
$$u_{0}=f,\quad u_{j}=\cR_{\lambda_{1}}^{j}f,\quad j\geq1.$$
Notice that for $j\geq1$, we have
$$\lambda_{1}u_{j+1}-Lu_{j+1}=u_{j}.$$
Therefore, by using the solvability and estimates for $\lambda$ large, we have $u_{j+1}\in\mathring{W}_{w}^{2,p}(\cD)$ and
$$\|u_{j+1}\|_{W_{w}^{2,p}(\cD)}\leq C\|u_{j}\|_{L_{w}^{p}(\cD)}\leq C\|u_{j}\|_{L_{w}^{p(j)}(\cD)}.$$
By using Lemma \ref{appen lem regularity}, we get
$$\|u_{j+1}\|_{W_{w}^{2,p(j)}(\cD)}\leq C\big(\|u_{j}\|_{L_{w}^{p(j)}(\cD)}+\|u_{j+1}\|_{L_{w}^{p}(\cD)}\big).$$
Hence,
\begin{equation}\label{estimates uj+1}
\|u_{j+1}\|_{W_{w}^{2,p(j)}(\cD)}\leq C\|u_{j}\|_{L_{w}^{p(j)}(\cD)}.
\end{equation}
By the weighted embedding theorem, we have
$$\|u_{j+1}\|_{L_{w}^{p(j+1)}(\cD)}\leq C\|u_{j}\|_{L_{w}^{p(j)}(\cD)}.$$
Iterating the above inequality yields that for $j\geq0$, we obtain
\begin{equation}\label{estimates uj+1 f}
\|u_{j}\|_{L_{w}^{p(j)}(\cD)}\leq C\|u_{0}\|_{L_{w}^{p(0)}(\cD)}=C\|f\|_{L_{w}^{p}(\cD)}.
\end{equation}
It follow from H\"{o}lder's inequality and the definition of $A_{p}$ weights that $u_{j+1}\in W^{2,p(j)/p}(\cD)$. Then we fix a $j=j(n,p)$ by choosing $p(j)>np/2$. For such $j$, we conclude from \eqref{estimates uj+1} and \eqref{estimates uj+1 f} that
$$\sup_{x\in\cD}|u_{j+1}(x)|\leq C\|u_{j+1}\|_{W^{2,p(j)/p}(\cD)}\leq C\|u_{j+1}\|_{W_{w}^{2,p(j)}(\cD)}\leq C\|f\|_{L_{w}^{p}(\cD)},$$
which shows that \eqref{es cR} holds with $m_{0}=j+1$. The lemma is proved.
\end{proof}

Next we show the solvability when $\lambda\ge \epsilon_0$ for a positive constant $\epsilon_{0}>0$.

\begin{lemma}\label{sov es weight LP}
Let $p\in (1,\infty)$, $w\in A_p$, $\epsilon_{0}>0$, and $L1\leq0$. Under the above conditions, for any $\lambda\geq\epsilon_{0}$ and $u\in \mathring{W}_{w}^{2,p}(\cD)$, we have
\begin{equation}\label{es Lp weight}
\|u\|_{W_{w}^{2,p}(\cD)}\leq C\|\lambda u-Lu\|_{L_{w}^{p}(\cD)},
\end{equation}
where $C$ depends on $n,p,\delta,\Lambda,\cD,\epsilon_0,[w]_{A_{p}}$, and $r_{0}$.
\end{lemma}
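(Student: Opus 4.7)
The plan is to leverage the already-available large-$\lambda$ weighted $W^{2,p}_w$ estimate and push the bound down to the interval $[\epsilon_0,\lambda_1]$ via a resolvent-identity argument combined with the $L^\infty$ maximum principle. The key observation is that once I establish the uniform bound
\[
\|u\|_{L^p_w(\cD)}\le C\,\|\lambda u-Lu\|_{L^p_w(\cD)}\quad\text{for all } \lambda\in[\epsilon_0,\lambda_1],
\]
with $C$ independent of $\lambda$, then rewriting the equation as $(\lambda_1 I-L)u=f+(\lambda_1-\lambda)u$ with $f:=\lambda u-Lu$ and applying the $\lambda=\lambda_1$ estimate produces \eqref{es Lp weight} immediately. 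Thus the whole task is this uniform $L^p_w$ bound.

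To prove it I would first reduce to smooth coefficients by a standard mollification that preserves the ellipticity, the small-mean-oscillation modulus $\gamma_0$, and the sign condition $c\le 0$. In this smooth setting the resolvent $\cR_\mu:=(\mu I-L)^{-1}$ is a well-defined bounded operator on $L^p_w(\cD)$ for every $\mu\ge\epsilon_0$ (existence by Fredholm theory since $\cR_{\lambda_1}$ is bounded on $L^p_w$ and $\mathring W^{2,p}_w\hookrightarrow L^p_w$ is compact; uniqueness by the $L^\infty$ maximum principle of Lemma \ref{prop cR}(1)), and the resolvents commute. Now combine three facts: Lemma \ref{prop cR}(1) gives $\|\cR_\lambda g\|_{L^\infty}\le\lambda^{-1}\|g\|_{L^\infty}\le\epsilon_0^{-1}\|g\|_{L^\infty}$; Lemma \ref{prop cR}(2) gives $\|\cR_{\lambda_1}^{m_0}h\|_{L^\infty(\cD)}\le C\|h\|_{L^p_w(\cD)}$; and writing $u=\cR_\lambda f$ with commutativity yields
\[
\|\cR_{\lambda_1}^{m_0}u\|_{L^\infty(\cD)}=\|\cR_\lambda(\cR_{\lambda_1}^{m_0}f)\|_{L^\infty(\cD)}\le\frac{C}{\epsilon_0}\|f\|_{L^p_w(\cD)}.
\]

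With this in hand, I iterate the resolvent identity $\cR_\lambda=\cR_{\lambda_1}+(\lambda_1-\lambda)\cR_{\lambda_1}\cR_\lambda$ exactly $m_0$ times to obtain
\[
u=\sum_{k=0}^{m_0-1}(\lambda_1-\lambda)^k\cR_{\lambda_1}^{k+1}f+(\lambda_1-\lambda)^{m_0}\cR_{\lambda_1}^{m_0}u.
\]
Each of the first $m_0$ terms is controlled in $L^p_w$ by $C\|f\|_{L^p_w(\cD)}$ using boundedness of $\cR_{\lambda_1}$ on $L^p_w$ and $|\lambda_1-\lambda|\le\lambda_1-\epsilon_0$. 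For the last term I use $w(\cD)<\infty$, which follows from $w\in A_p$ and $\cD$ bounded, to convert the just-established $L^\infty$-bound of $\cR_{\lambda_1}^{m_0}u$ into an $L^p_w$-bound: $\|\cR_{\lambda_1}^{m_0}u\|_{L^p_w(\cD)}\le w(\cD)^{1/p}\|\cR_{\lambda_1}^{m_0}u\|_{L^\infty(\cD)}\le (C/\epsilon_0)w(\cD)^{1/p}\|f\|_{L^p_w(\cD)}$. Adding the estimates gives $\|u\|_{L^p_w(\cD)}\le C\|f\|_{L^p_w(\cD)}$; a final pass to the limit in the coefficient mollification, using lower semicontinuity of the $W^{2,p}_w$-norm under weak convergence, completes the proof.

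The main obstacle, I expect, is ensuring the constant does not degenerate as $\lambda\searrow\epsilon_0$. This is secured because the only $\lambda$-dependent quantities in the chain are the maximum-principle contraction factor $1/\lambda\le 1/\epsilon_0$ and the polynomial weights $(\lambda_1-\lambda)^k\le(\lambda_1-\epsilon_0)^k$, both uniformly bounded on $[\epsilon_0,\lambda_1]$, while the iteration count $m_0$ and the constants from Lemma \ref{prop cR} depend only on the fixed parameters $n,p,\delta,\Lambda,\cD,[w]_{A_p},r_0$. A minor technical point is verifying that the mollified coefficients satisfy the hypotheses of Lemma \ref{prop cR} uniformly, so that the approximation passes to the limit without any loss in the constants.
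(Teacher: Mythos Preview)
Your argument is correct and reaches the same conclusion, but the route differs from the paper's in one key step: how the remainder term in the iterated resolvent identity is handled. Both you and the paper write
\[
u=\sum_{k=0}^{m-1}(\lambda_1-\lambda)^k\cR_{\lambda_1}^{k+1}f+(\lambda_1-\lambda)^{m}\cR_{\lambda_1}^{m}u,
\]
but the paper takes $m>m_0$ and bounds the remainder in terms of $\|u\|_{L^p_w}$: it splits $\cR_{\lambda_1}^{m}=\cR_{\lambda_1}^{m-m_0}\cR_{\lambda_1}^{m_0}$, uses Lemma~\ref{prop cR}(2) to pass $\cR_{\lambda_1}^{m_0}u$ into $L^\infty$, and then uses the $L^\infty$ contraction \eqref{ine cR f} at level $\lambda_1$ to pick up a factor $(1-\epsilon_0/\lambda_1)^m$; taking $m$ large makes this $\le 1/2$ and allows absorption. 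You instead stop at $m=m_0$ and bound the remainder directly in terms of $\|f\|_{L^p_w}$ via the commutation identity $\cR_{\lambda_1}^{m_0}u=\cR_\lambda(\cR_{\lambda_1}^{m_0}f)$ together with the $L^\infty$ contraction of $\cR_\lambda$ for $\lambda\ge\epsilon_0$. Your version avoids the absorption step entirely; the paper's version has the advantage that it never invokes $\cR_\lambda$ for $\lambda<\lambda_1$, so it stays strictly within the operators already constructed.

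Two minor points. First, Lemma~\ref{prop cR}(1) is stated only for $\lambda\ge\lambda_1$; you use it for $\lambda\ge\epsilon_0$. This is fine because the underlying maximum principle (the cited \cite[Theorem 11.2.1(3)]{k}) holds for all $\lambda>0$ once $L1\le 0$ and the coefficients are smooth, but you should say so rather than cite the lemma as written. Second, your Fredholm justification for $\cR_\lambda$ as a bounded operator on $L^p_w$ is unnecessary (and the compactness of $\mathring W^{2,p}_w\hookrightarrow L^p_w$ for general $A_p$ weights would itself need an argument). All you actually use is that, for smooth coefficients and bounded data $g=\cR_{\lambda_1}^{m_0}f\in L^\infty$, the problem $(\lambda-L)v=g$, $v|_{\partial\cD}=0$ has a unique classical solution with $\|v\|_{L^\infty}\le\lambda^{-1}\|g\|_{L^\infty}$, and that this $v$ coincides with $\cR_{\lambda_1}^{m_0}u$ by uniqueness; this is the classical maximum principle and requires no weighted-space input.
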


\begin{proof}
As noted after Theorem \ref{solvability weight Lp}, it suffices to prove the case when $\epsilon_{0}\leq\lambda<\lambda_{1}$.
We follow the idea in \cite[Section 11.3]{k}. Here we list the main differences. %As mentioned before, we only need to focus on the case when $\varepsilon_{0}\leq\lambda<\lambda_{1}$.
By approximations, we may assume that the coefficients are smooth. Similar to the proof of \cite[Theorem 8.5.6]{k}, we have
$$\|u\|_{W_{w}^{2,p}(\cD)}\leq C\big(\|\lambda u-Lu\|_{L_{w}^{p}(\cD)}+\|u\|_{L_{w}^{p}(\cD)}\big).$$
Therefore, it suffices to prove for $\varepsilon_{0}\leq\lambda<\lambda_{1}$, we have
$$\|u\|_{L_{w}^{p}(\cD)}\leq C\|\lambda u-Lu\|_{L_{w}^{p}(\cD)}.$$
For this, we define
$$f:=\lambda u-Lu.$$
Then
$$\lambda_{1}u-Lu=(\lambda_{1}-\lambda)u+f,\quad u=(\lambda_{1}-\lambda)\cR_{\lambda_{1}}u+\cR_{\lambda_{1}}f.$$
%where $\cR_{\lambda_{1}}$ is the inverse operator of $\lambda_{1}-L: W_{w}^{2,p}\rightarrow L_{w}^{p}$.
By induction on $m$, we have
\begin{equation}\label{induction m}
u=\big((\lambda_{1}-\lambda)\cR_{\lambda_{1}}\big)^{m}u+\sum_{i=0}^{m-1}\big((\lambda_{1}-\lambda)\cR_{\lambda_{1}}\big)^{i}\cR_{\lambda_{1}}f,
\end{equation}
where $m\geq1$ is any integer. We next introduce constants $C_{1}$ and $M_{m}$ such that
\begin{equation}\label{C1 Mm}
\|\cR_{\lambda_{1}}g\|_{L_{w}^{p}(\cD)}\leq C_{1}\|g\|_{L_{w}^{p}(\cD)}\quad\forall~g\in L_{w}^{p}(\cD),\quad M_{m}=\sum_{i=0}^{m-1}(\lambda_{1}-\epsilon_{0})^{i}C_{1}^{i+1}.
\end{equation}
By using \eqref{induction m}, $0<\lambda_{1}-\lambda\leq\lambda_{1}-\epsilon_{0}$, \eqref{C1 Mm}, \eqref{ine cR f}, and \eqref{es cR}, we obtain for $m>m_{0}$,
\begin{align*}
\|u\|_{L_{w}^{p}(\cD)}&\leq w(\cD)^{1/p}(\lambda_{1}-\epsilon_{0})^{m}\sup_{x\in\cD}|\cR_{\lambda_{1}}^{m-m_{0}}\cR_{\lambda_{1}}^{m_{0}}u(x)|+M_{m}\|f\|_{L_{w}^{p}(\cD)}\\
&\leq w(\cD)^{1/p}\lambda_{1}^{m_{0}}(1-\epsilon_{0}/\lambda_{1})^{m}\sup_{x\in\cD}|\cR_{\lambda_{1}}^{m_{0}}u(x)|+M_{m}\|f\|_{L_{w}^{p}(\cD)}\\
&\leq C_{2}w(\cD)^{1/p}\lambda_{1}^{m_{0}}(1-\epsilon_{0}/\lambda_{1})^{m}\|u\|_{L_{w}^{p}(\cD)}+M_{m}\|f\|_{L_{w}^{p}(\cD)}.
\end{align*}
Fixing $m>m_{0}$ such that
$$C_{2}w(\cD)^{1/p}\lambda_{1}^{m_{0}}(1-\epsilon_{0}/\lambda_{1})^{m}\leq1/2,$$
we get \eqref{es Lp weight}.
%$$\|u\|_{L_{w}^{p}(\cD)}\leq C\|f\|_{L_{w}^{p}(\cD)}.$$
The lemma is proved.
\end{proof}

We now give

\begin{proof}[\bf Proof of Theorem \ref{solvability weight Lp}.]
By using the method of continuity, we only need to prove that for any $u\in \mathring{W}_{w}^{2,p}(\cD)$ and $\lambda\geq0$, we have \eqref{es Lp weight}.
%\begin{equation}\label{apen estimate}
%\|u\|_{W_{w}^{2,p}(\cD)}\leq C\|(\lambda-L)u\|_{L_{w}^{p}(\cD)}.
%\end{equation}
To this end, without loss of generality we may assume that $\cD\subset B_{2R_{0}}$, where $R_{0}=\mbox{diam}~{\cD}$. We take the global barrier $v_{0}$ from \cite[Lemma 11.1.2]{k}:
$$v_{0}(x)=\cosh(4c_{0}R_{0})-\cosh(c_{0}|x|)$$
satisfies $Lv_{0}\leq-1$, and $v_{0}>0$ in $B_{4R_{0}}$ and $v_{0}=0$ on $\partial B_{4R_{0}}$, where $c_{0}>0$ is a constant to be chosen. Next we introduce a new operator $L'$ by
$$L'u=v_{0}^{-1}L(v_{0}u).$$
Notice that in $\cD\subset B_{2R_{0}}$, according to the construction of $v_{0}$, we have $L'1\leq-\delta'$ for a constant $\delta'>0$ depending on $n,\delta,\Lambda$, and $R_{0}$, provided that $c_0=c_0(d,\delta,\Lambda)$ is sufficiently large. By using Lemma \ref{sov es weight LP} applied to $L'':=L'+\delta'$, we get
\begin{align*}
\|u\|_{W_{w}^{2,p}(\cD)}&\leq C\|uv_{0}^{-1}\|_{W_{w}^{2,p}(\cD)}\\
&\leq C\|(\lambda+\delta')uv_{0}^{-1}-L''(uv_{0}^{-1})\|_{L_{w}^{p}(\cD)}\\
&= C\|(\lambda u-v_{0}L'(uv_{0}^{-1}))v_{0}^{-1}\|_{L_{w}^{p}(\cD)}\\
&=C\|(\lambda u-Lu)v_{0}^{-1}\|_{L_{w}^{p}(\cD)}\leq C\|(\lambda -L)u\|_{L_{w}^{p}(\cD)}.
\end{align*}
Hence, we finish the proof of the theorem.
\end{proof}

Finally we give the solvability of the adjoint operator of $L$ defined by
$$L^{*}u:=D_{ij}(a^{ij}u)-D_{i}(b^{i}u)+cu.$$
By using a similar argument in the proof of Lemma \ref{sol adjoint}, from Theorem \ref{solvability weight Lp}, we have
\begin{corollary}\label{solvability adjoint}
Let $p\in(1,\infty)$, $w\in A_{p}$, and $L1\leq0$. Assume that $g=(g^{ij})_{i,j=1}^{n}\in L_{w}^{p}(\cD)$. The coefficients $a^{ij}$, $b^{i}$, and $c$ satisfy the same conditions as imposed in Theorem \ref{solvability weight Lp}. Then for any $\lambda\geq0$,
\begin{align*}
\begin{cases}
L^{*}u-\lambda u=\Div^{2}g&\quad\mbox{in}~\cD,\\
u=\frac{g\nu\cdot\nu}{A\nu\cdot\nu}&\quad\mbox{on}~\partial\cD
\end{cases}
\end{align*}
admits a unique adjoint solution $u\in L_{w}^{p}(\cD)$. Moreover, the following estimate holds
\begin{equation}\label{adjoint es u}
\|u\|_{L_{w}^{p}(\cD)}\leq C\|g\|_{L_{w}^{p}(\cD)},
\end{equation}
where $C=C(n,p,\delta,\Lambda,\cD,r_{0},[w]_{A_{p}})$.
\end{corollary}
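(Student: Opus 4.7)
The plan is to mirror the duality proof of Lemma~\ref{sol adjoint}, replacing the unweighted $W^{2,p}$--solvability by the weighted version just established in Theorem~\ref{solvability weight Lp}. The key observation is that for $w \in A_{p}$ and $1/p+1/p'=1$, the conjugate weight $w':=w^{-1/(p-1)}$ lies in $A_{p'}$ with an $A_{p'}$ constant controlled by $[w]_{A_p}$, the dual of $L^{p}_{w}(\cD)$ is $L^{p'}_{w'}(\cD)$ under the unweighted pairing $\int fg\,dx$, and weighted H\"older's inequality gives
\[
\Bigl|\int_{\cD} fg\,dx\Bigr|\le \|f\|_{L^{p}_{w}(\cD)}\|g\|_{L^{p'}_{w'}(\cD)}.
\]

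First, since $\lambda\ge 0$ and $L1\le 0$ by hypothesis, the operator $L-\lambda I$ also satisfies $(L-\lambda)1\le 0$, so Theorem~\ref{solvability weight Lp} applies with exponent $p'$ and weight $w'$: for every $f\in L^{p'}_{w'}(\cD)$ there is a unique $v\in \mathring{W}^{2,p'}_{w'}(\cD)$ with $Lv-\lambda v=f$ and
\[
\|v\|_{W^{2,p'}_{w'}(\cD)}\le C\|f\|_{L^{p'}_{w'}(\cD)},
\]
with $C$ depending on $n,p,\delta,\Lambda,\cD,[w]_{A_p},r_0$. Define the linear functional
\[
T:L^{p'}_{w'}(\cD)\to \mathbb R,\qquad T(f):=\int_{\cD}\mathrm{tr}(g\,D^{2}v)\,dx.
\]
By weighted H\"older and the $W^{2,p'}_{w'}$--estimate,
\[
|T(f)|\le \|g\|_{L^{p}_{w}(\cD)}\|D^{2}v\|_{L^{p'}_{w'}(\cD)}\le C\|g\|_{L^{p}_{w}(\cD)}\|f\|_{L^{p'}_{w'}(\cD)},
\]
so $T$ is a bounded linear functional on $L^{p'}_{w'}(\cD)$.

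Next, by the Riesz representation theorem for weighted $L^{p}$ spaces, there is a unique $u\in L^{p}_{w}(\cD)$ with $T(f)=\int_{\cD}uf\,dx$ for all $f\in L^{p'}_{w'}(\cD)$, and $\|u\|_{L^{p}_{w}(\cD)}\le C\|g\|_{L^{p}_{w}(\cD)}$, which is \eqref{adjoint es u}. Tracing back the construction, $u$ satisfies
\[
\int_{\cD} u(Lv-\lambda v)\,dx=\int_{\cD}\mathrm{tr}(g\,D^{2}v)\,dx
\]
for every $v\in \mathring{W}^{2,p'}_{w'}(\cD)$, which is the natural weighted extension of Definition~\ref{def adjoint} and, after formal integration by parts, encodes both $L^{*}u-\lambda u=\mathrm{Div}^{2}g$ in $\cD$ and the boundary condition $u=g\nu\cdot\nu/(A\nu\cdot\nu)$ on $\partial\cD$, exactly as in the unweighted derivation. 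Uniqueness of $u$ comes for free from surjectivity of $L-\lambda I$ on $L^{p'}_{w'}(\cD)$: if $\int_{\cD}u\,f\,dx=0$ for all such $f$, then $u=0$ in $L^{p}_{w}(\cD)$.

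There is essentially no new obstacle at this stage, since the hard analysis (the weighted $W^{2,p}$ a priori estimate, solvability for all $\lambda\ge 0$ via the global barrier $v_0$, and the iteration using $A_p \Rightarrow A_{p'}$ duality implicit in Lemma~\ref{sov es weight LP}) has been absorbed into Theorem~\ref{solvability weight Lp}. The only minor care is to ensure that the test class $\mathring W^{2,p'}_{w'}(\cD)$ is rich enough for the integral identity to characterize the adjoint solution uniquely, but this follows from density of smooth compactly supported functions in $L^{p'}_{w'}(\cD)$, guaranteed by the $A_{p'}$ property of $w'$.
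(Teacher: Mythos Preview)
Your proof is correct and follows essentially the same approach as the paper, which simply states that the corollary follows by repeating the duality argument of Lemma~\ref{sol adjoint} with Theorem~\ref{solvability weight Lp} in place of Lemma~\ref{solvability}. You have filled in the details the paper omits, including the key observation that $w^{-1/(p-1)}\in A_{p'}$ with controlled constant and that the dual of $L^{p'}_{w'}(\cD)$ under the unweighted pairing is $L^{p}_{w}(\cD)$.
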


\section*{Acknowledgement} This work was completed while the second author was visiting Brown University. She would like to thank the Division of Applied Mathematics at Brown University for the hospitality and the stimulating environment. The authors would like to thank Prof. Seick Kim and Yanyan Li for helpful discussions.

\end{document}